\documentclass[10pt,oneside]{amsart}
\usepackage[margin=1in,letterpaper]{geometry}  
\usepackage{amssymb,amsthm}
\usepackage[leqno]{amsmath}
\usepackage{enumerate}             
\usepackage{srcltx}                
\usepackage{cite}
\usepackage{hyperref}
\usepackage{xspace}                
\usepackage{ctable}                
\usepackage{multirow}              
\usepackage{caption}
\usepackage{picins}
\usepackage{cutwin}
\usepackage[percent]{overpic}
\usepackage{mathrsfs}
\usepackage{esvect}

\newbox\mybox
\newlength{\myboxwidth}

\newcommand\addpicture[5]{% 
  \setbox\mybox=\hbox{\includegraphics[width=#3]{#2}}
  \myboxwidth\wd\mybox    
  \renewcommand\windowpagestuff{% 
    \includegraphics[width=#3]{#2}
    \captionsetup{hypcap=false}
    \captionof{figure}{#4}
    \label{#5}
  }
  \parpic[#1]{% 
    \begin{minipage}{\myboxwidth}
      \windowpagestuff 
    \end{minipage} 
  }
}

\newcommand\addpicturenocap[4]{% 
  \setbox\mybox=\hbox{\includegraphics[width=#3]{#2}}
  \myboxwidth\wd\mybox    
  \renewcommand\windowpagestuff{% 
    \includegraphics[width=#3]{#2}
  }
  \parpic[#1]{% 
    \begin{minipage}{\myboxwidth}
      \windowpagestuff 
    \end{minipage} 
  }
}

\makeatletter
\def\@settitle{\begin{center}%
  \baselineskip14\p@\relax
    \normalfont \bf
\uppercasenonmath\@title
  \@title
\end{center}%
\begin{center}%
{\it A Picturebook of Tilings}
\end{center}%
}
\makeatother

\DeclareSymbolFont{bbold}{U}{bbold}{m}{n}
\DeclareSymbolFontAlphabet{\mathbbold}{bbold}

\newtheorem{theorem}{Theorem}[section]
\newtheorem*{theorem-nn}{Theorem}
\newtheorem{corollary}[theorem]{Corollary}
\newtheorem{lemma}[theorem]{Lemma}
\newtheorem{proposition}[theorem]{Proposition}

\newtheorem*{question-nn}{Question}

\newtheorem*{conjecture-nn}{Conjecture}
\theoremstyle{definition}
\newtheorem{definition}[theorem]{Definition}
\newtheorem*{definition-nn}{Definition}
\theoremstyle{remark}

\newtheorem*{example-nn}{Example}

\newcommand{\acts}{\curvearrowright}
\newcommand{\es}{\varnothing}
\newcommand{\orbit}{\mathrm{Orb}}
\newcommand{\one}{\skew{-3}{\vec}{\mathbf{1}}}
\newcommand{\la}{\leftarrow}

\begin{document}
\title{Lebesgue Orbit Equivalence of Multidimensional Borel Flows}

\author{Konstantin Slutsky}
\address{Department of Mathematics, Statistics, and Computer Science\\
University of Illinois at Chicago\\
322 Science and Engineering Offices (M/C 249)\\
851 S. Morgan Street\\
Chicago, IL 60607-7045}
\email{kslutsky@gmail.com}

\begin{abstract}
  The main result of the paper is classification of free multidimensional Borel flows up to Lebesgue
  Orbit Equivalence, by which we understand an orbit equivalence that preserves the
  Lebesgue measure on each orbit.  Two non smooth \( \mathbb{R}^{d} \)-flows are shown to be
  Lebesgue Orbit Equivalence if and only if they admit the same number of invariant ergodic
  probability measures.
\end{abstract}

\maketitle

\picchangemode
\section{Introduction}
\label{sec:introduction}

\subsection{Orbit equivalences}
\label{sec:orbit-equivalences}

The core concept for this paper is orbit equivalence of actions.  Since we work in the framework of
Descriptive Set Theory, we consider Borel actions of Polish groups\footnote{A topological group is
  \emph{Polish} if its topology is Polish, i.e., it is separable and metrizable by a complete
  metric.} on standard\footnote{A standard Borel space is a set \( X \) together with a
  distinguished \( \sigma \)-algebra \( \Sigma \) such that for some Polish topology on \( X \) the
  \( \sigma \)-algebra~\( \Sigma \) coincides with the family of Borel sets.} Borel spaces.  Two
group actions \( G \acts X \) and \( H \acts Y \) are \emph{orbit equivalent} (OE) if there exists a
Borel bijection \( \phi : X \to Y \) which sends orbits onto orbits:
\[ \phi \bigl( \orbit_{G}(x) \bigr)  = \orbit_{H}\bigl( \phi(x) \bigr) \quad \textrm{for
all } x \in X. \]

This notion is, perhaps, better known in the framework of Ergodic Theory, where phase spaces \( X \)
and \( Y \) are endowed with invariant probability measures which the orbit equivalence is assumed
to preserve.  Our set up is different in two aspects.  We do not fix any measures on the phase
spaces, therefore potentially increasing the choice of orbit equivalence maps.  On the other hand,
contrary to the Ergodic Theoretical case where functions need to be defined almost everywhere, we
require our maps \( \phi : X \to Y \) to be defined on each and every point.  In this sense
Descriptive Theoretical set up is more restrictive.

The notion of OE in the above form is arguably better suited for actions of \emph{discrete} groups.
One of the high points in that area is the classification of hyperfinite equivalence relations up to
orbit equivalence by R.~Dougherty, S.~Jackson, and A.~S.~Kechris \cite[Theorem
9.1]{dougherty_structure_1994} based on an earlier work of M.~G.~Nadkarni
\cite{nadkarni_existence_1990}.  First, let us recall that with any action \( G \acts X \) we may
associate the \emph{orbit equivalence relation} on~\( X \) (denoted by \( E_{X}^{G} \), or just
by \( E_{X} \), when the group is understood) by declaring two points to be equivalent whenever
they are in the same orbit of the action.  A countable Borel equivalence relation is
\emph{hyperfinite} if it is an increasing union of finite Borel equivalence relations.  Given a
countable Borel equivalence relation \( E \subseteq X \times X \), a measure \( \mu \) on \( X \) is
said to be \( E \)-invariant if every Borel automorphism \( \theta : X \to X \) of \( E \) preserves
\( \mu \).  A measure is \emph{ergodic} with respect to \( E \) if every \( E \)-invariant subset of
\( X \) is either null or co-null.  For the sake of brevity, a probability invariant ergodic measure
is called a \emph{pie} measure.  A countable relation is \emph{aperiodic} if all of its classes are
infinite.  Finally, recall that a countable equivalence relation is \emph{smooth} if it admits a
Borel transversal \textemdash{} a Borel set that intersects every equivalence class in exactly one
point.  Having introduced all the necessary terminology, the Dougherty--Jackson--Kechris
classification (we later refer to it as DJK classification for short) of hyperfinite Borel
equivalence relations can be stated as follows.

\begin{theorem-nn}[Dougherty--Jackson--Kechris]
  \label{thm:DJK}
  Two non smooth aperiodic hyperfinite Borel equivalence relations are isomorphic if and only if
  cardinalities of the sets of pie measures are the same.
\end{theorem-nn}

The situation changes drastically when one considers locally compact non discrete groups.  All free
non smooth Borel \( \mathbb{R} \)-flows are orbit equivalent.  In fact, a much stronger result is
true.  An orbit equivalence \( \phi : X \to Y \) between two \emph{free} actions
\( \mathbb{R} \acts X \) and \( \mathbb{R} \acts Y \) gives rise to a cocycle\footnote{For abelian
  groups the action is denoted additively.  For instance, \( x + r \) below means the action of
  \( r \in \mathbb{R} \) upon the element \( x \in X \).}
\[ f : \mathbb{R} \times X \to \mathbb{R} \qquad f(r,x) \textrm{ is such that } \phi(x) + f(r,x) =
\phi(x + r). \]
A \emph{time-change equivalence} between free actions \( \mathbb{R} \acts X \) and
\( \mathbb{R} \acts Y \) is an OE \( \phi : X \to Y \) such that for each \( x \in X \) the function
\( f(\,\cdot\,, x) : \mathbb{R} \to \mathbb{R} \) is a homeomorphism\footnote{One may even require a
  stronger condition of \( f(\cdot, x) \) being a \( C^{\infty} \)-diffeomorphism.  The theorem of
  Miller and Rosendal remains valid in this case.}\kern-1.2mm.  This is a
substantial strengthening of the notion of orbit equivalence.  Nevertheless, as proved by
B.~D.~Miller and C.~Rosendal \cite{miller_descriptive_2010}, in the Descriptive Set Theoretic set up
the world of free \( \mathbb{R} \)-flows collapses with respect to time-change equivalence.

\begin{theorem-nn}[Miller--Rosendal]
  \label{thm:Miller-Rosendal}
  Any two non smooth free \( \mathbb{R} \)-flows are time-change equivalent.
\end{theorem-nn}

The difference between continuous and discrete worlds lies in the fact that continuous groups have a
lot more non trivial structures on them.  The obvious one is topology.  Whenever we have a free
action \( G \acts X \) and an orbit \( \mathcal{O} \subseteq X \), we may transfer the topology
from \( G \) onto \( \mathcal{O} \) using the correspondence \( G \ni g \mapsto gx \in \mathcal{O}
\) for any chosen \( x \in \mathcal{O} \).  If groups \( G \) and \( H \) are discrete, any OE between
their free actions respects the topology: restricted onto any orbit \( \mathcal{O} \subseteq X \),
\( \phi : \mathcal{O} \to \phi(\mathcal{O}) \) is a homeomorphism.  When the topology on
\( \mathcal{O} \) is not discrete, the map \( \phi : \mathcal{O} \to \phi(\mathcal{O}) \) has no
reasons to preserve the topology, and when this is imposed as an additional assumption on
\( \phi \), one recovers the concept of time-change equivalence.

The structure possessed by all locally compact groups which is responsible for the failure of DJK
classification is Haar measure.  Being invariant, it can also be transferred\footnote{Since we
  consider \emph{left} actions one needs to take a \emph{right} Haar measure.  More on this in
  Section \ref{sec:orbit-equiv-locally}.} onto any orbit of a free action \( G \acts X \).  Again,
if \( G \) and \( H \) are discrete (and if one takes counting Haar measures), any OE map
\( \phi : X \to Y \) restricts to a measure preserving isomorphism between orbits.  When \( G \) and
\( H \) are continuous, this may no longer be the case.  This turns out to be an obstacle for
cardinality of the set of pie measures to be an invariant of the OE between non discrete locally
compact group actions.

\subsection{Lebesgue Orbit Equivalence}
\label{sec:LOE-intro}

The paper is concerned mainly with free actions of Euclidean spaces \( \mathbb{R}^{d} \acts X \) on
standard Borel spaces.  Two such actions \( \mathbb{R}^{d} \acts X \) and
\( \mathbb{R}^{d} \acts Y \) are \emph{Lebesgue Orbit Equivalent} (LOE) if there exists an OE
\( \phi : X \to Y \) which preserves the Lebesgue measure on each orbit\footnote{A rigorous
  definition can be found in Section \ref{sec:orbit-equiv-locally}.}\kern-1.2mm.  In Ergodic
Theoretical set up, i.e., when \( X \) and \( Y \) are endowed with probability invariant measures
and the map \( \phi \) needs to be defined almost everywhere, this notion seems to have appeared for
the first time in the work of U.~Krengel \cite{krengel_rudolphs_1976}, where the following theorem
was proved.

\begin{theorem-nn}[Krengel]
  \label{thm:LOE-ErgTh-Krengel}
  Free ergodic flows \( \mathbb{R} \acts X \) and \( \mathbb{R} \acts Y \) are always LOE.
\end{theorem-nn}

Still withing the framework of Ergodic Theory, this has later been generalized by D.~Rudolph
\cite{rudolph_smooth_1979} to free actions of \( \mathbb{R}^{d} \).  In fact, Rudolph proved a much
stronger result. Namely for \( d \ge 2 \) an OE map \( \phi : X \to Y \) may be assumed to be both
Lebesgue measure preserving and a homeomorphism when restricted to any orbit.

\subsection{Main results}
\label{sec:main-results}

In the present paper we prove an analog of DJK classification for free \( \mathbb{R}^{d} \)-flows.

\setcounter{section}{9}
\begin{theorem}
  \label{thm:main-theorem-intro}
  Free non smooth Borel \( \mathbb{R}^{d} \)-flows are Lebesgue Orbit Equivalent if and only
  if cardinalities of the sets of pie measures are the same.
\end{theorem}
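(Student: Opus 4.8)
The plan is to prove the two implications separately; the reverse one (``same number of pie measures $\Rightarrow$ LOE'') carries essentially all the content. For the forward implication, fix lacunary cocompact Borel cross-sections $\mathcal{C}_X \subseteq X$ and $\mathcal{C}_Y \subseteq Y$ --- these exist for every free Borel $\mathbb{R}^d$-flow --- and pass to their trace relations $\mathcal{E}_X := E_X \restriction \mathcal{C}_X$ and $\mathcal{E}_Y := E_Y \restriction \mathcal{C}_Y$, which are countable, Borel, aperiodic and hyperfinite, and are non smooth precisely when the ambient flow is. Through the transverse (Palm) measure calculus, flow-invariant ergodic probability measures on $X$ correspond bijectively to pie measures of the trace relation of \emph{any} countable complete Borel section of $X$: cocompactness makes the transverse measure finite, hence normalizable, and lacunarity makes the disintegration ``Lebesgue along the section'' clean. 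An LOE $\phi : X \to Y$ restricts to a Borel bijection $\phi \restriction \mathcal{C}_X$ conjugating $\mathcal{E}_X$ onto the trace relation of the countable complete Borel section $\phi(\mathcal{C}_X) \subseteq Y$, whence $X$ and $Y$ have the same number of pie measures.

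For the reverse implication, suppose the two flows carry the same number of pie measures. By the correspondence just described, $\mathcal{E}_X$ and $\mathcal{E}_Y$ are aperiodic, hyperfinite, non smooth countable Borel equivalence relations with the same number of pie measures, so the Dougherty--Jackson--Kechris classification yields a Borel isomorphism between them. The heart of the matter is to promote an isomorphism of these \emph{discrete} trace relations to a Lebesgue Orbit Equivalence of the \emph{ambient} flows: a bare DJK isomorphism records no information about how cross-section points sit geometrically inside their $\mathbb{R}^d$-orbits, so it cannot be ``extended'' to $\phi$ directly.

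The route I would take is to reduce everything to the discrete world via a model theorem: every free non smooth Borel $\mathbb{R}^d$-flow is LOE to the unit suspension $\mathbb{R}^d \acts (\mathbb{R}^d \times Z)/\mathbb{Z}^d$ of a free Borel $\mathbb{Z}^d$-action $\mathbb{Z}^d \acts Z$, naturally on pie measures (pie measures of the flow correspond to those of $\mathbb{Z}^d \acts Z$, equivalently to those of its orbit equivalence relation). This is exactly where the tilings enter: from a lacunary cocompact cross-section together with a hyperfinite exhaustion of its trace relation one builds, Borel-uniformly in the orbit, a hierarchical (``toast'') tiling of each orbit by boxes from one fixed finite palette of integer-sided boxes --- each level an exact coarsening of the previous, relative boundaries vanishing --- and then subdivides the boxes into unit cubes, turning each orbit into a unit-cube grid off of which the generating free Borel $\mathbb{Z}^d$-action can be read. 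Granting the model theorem the conclusion is routine: two flows with the same number of pie measures give rise to $\mathbb{Z}^d$-actions $\mathbb{Z}^d \acts Z_1$, $\mathbb{Z}^d \acts Z_2$ whose orbit equivalence relations have the same number of pie measures, so the $\mathbb{Z}^d$-case of the DJK classification provides a Borel $\theta : Z_1 \to Z_2$ conjugating one orbit relation to the other, and then $(t, z) \mapsto (t, \theta(z))$ on the fundamental domains $[0,1)^d \times Z_i$ is a Borel bijection of the two suspensions that sends orbits onto orbits and restricts to a translation on every unit cube of every orbit --- hence an LOE. Chaining $X \to (\text{model of }X) \cong (\text{model of }Y) \to Y$ produces the desired LOE $X \to Y$.

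The step I expect to be the main obstacle is the model theorem, and inside it the construction of the box-tilings. The tiling of each orbit has to be \emph{exact} (a genuine partition, not merely a cover), with \emph{nested} levels, \emph{vanishing} relative boundaries, shapes confined to a \emph{finite} palette, and full Borel uniformity across orbits, all starting from a configuration that is only lacunary and cocompact --- so the cross-section points must be ``snapped'' onto a fixed integer grid without opening gaps, creating overlaps, or letting the box shapes proliferate, and this has to be carried out coherently along a hyperfinite exhaustion so that the hierarchy closes up. Keeping the bookkeeping Borel, handling the at-most-countably-many ergodic components and the case of no pie measures uniformly, and ensuring the generating $\mathbb{Z}^d$-action comes out free and Borel, is where the bulk of the work will reside. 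Krengel's ergodic one-dimensional theorem and Rudolph's $\mathbb{R}^d$ strengthening show that the measured analog holds, but neither supplies a Borel, measure-free construction, so the tiling combinatorics must be built by hand.
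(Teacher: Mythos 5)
Your forward direction is fine and matches the paper in substance: Theorem~\ref{thm:wHOE-bijection-pie} shows directly that any weak Haar Orbit Equivalence (hence any LOE) induces a bijection on pie measures, and your route through trace relations on cocompact cross sections is a legitimate, if slightly more roundabout, way of seeing the same thing.

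The gap is in the reverse direction, concentrated exactly where you flag it: the ``model theorem'' that every free non smooth Borel \( \mathbb{R}^{d} \)-flow is LOE to the unit suspension of a free Borel \( \mathbb{Z}^{d} \)-action, which you propose to obtain by constructing, Borel-uniformly on \emph{every} orbit, a nested exact tiling by integer-sided boxes from a finite palette. Two concrete obstructions block this. First, the arithmetic one: the ``snap to the integer grid'' step cannot be done by small perturbations. The paper's regular tiling (Theorem~\ref{thm:Rudolphs-tiling}) uses tiles with side lengths \( 1 \) and an irrational \( \alpha \) precisely because \( \{\, m_{1} + m_{2}\alpha : m_{1}, m_{2} \in \mathbb{N} \,\} \) is asymptotically dense in \( \mathbb{R} \); at each stage of Rudolph's extension argument this density lets one correct the offset of a previously laid-down inner rectangle by a shift of \( \ell^{\infty} \)-norm less than \( \epsilon_{k} \) while keeping all side lengths in the palette. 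With integer side lengths the set of realizable offsets is discrete, arbitrarily small shifts buy nothing, and the hierarchy does not close up \textemdash{} so the subdivide-into-unit-cubes plan breaks at the very step you expect to be hardest. Second, even with the right palette the paper only achieves an exact regular tiling on an invariant subset \( Z \) of uniformly full measure, and it explicitly records as open whether such a tiling can be produced on all of \( X \); your suspension model would have to handle every orbit, in particular those in the compressible part, and your sketch offers nothing for them. The paper's actual route sidesteps both issues by splitting the problem: Theorem~\ref{thm:LOE-uniformly-full} builds a LOE between full-measure invariant pieces by applying the DJK classification to the \( \one \)-tiles of the \( \{1,\alpha\}^{d} \)-tiling and extending along the Borel matchings \( \theta_{\vec{a}} \), while Theorem~\ref{thm:compressible-LOE} treats the leftover non smooth compressible pieces with a separate back-and-forth construction over a dyadic block hierarchy, with Nadkarni's compressibility maps supplying the room needed at each ``forth'' step; Theorem~\ref{thm:loe-Rd-flows} then stitches the two. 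Without an analogue of that decomposition (and without abandoning integer side lengths for an irrational \( \alpha \)), the argument as written does not go through.
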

\setcounter{section}{1}

The structure of the paper is as follows.  Section \ref{sec:esixt-cocomp-lacun} provides a
different proof of a theorem due to C.~Conley which guarantees
existence of cocompact cross sections.  Section
\ref{sec:orbit-equiv-locally} introduces various notions of orbit
equivalences which preserve measures between orbits and Section
\ref{sec:invar-meas-cross-phase} shows a correspondence between
invariant measures on a phase space and on a cocompact cross section.

Remaining sections deal with \( \mathbb{R}^{d} \)-flows and prove various parts of the main result.
In Section \ref{sec:rect-tilings-Rn} we introduce rectangular tilings, Section
\ref{sec:unif-rokhl-lemma} proves a version of Rokhlin lemma that works for all invariant measures
at the same time, and Section \ref{sec:regul-tilings-orbits} employs these results together with a
tiling method due to Rudolph \cite{rudolph_rectangular_1988} to construct a LOE between invariant
subsets of uniformly full measures.  Section \ref{sec:compr-flows-proof} then deals with the
complementary case of flows with no invariant measures and in Section \ref{sec:proof-main-theorem}
we finally prove the main theorem.

\section{Cocompact cross sections}
\label{sec:esixt-cocomp-lacun}

Let \( G \) be a locally compact Polish group acting in a Borel way on a standard Borel space
\( X \).  In this section the action is not assumed to be free.  A \emph{cross section} for
\( G \acts X \) is a Borel set \( \mathcal{C} \subseteq X \) that intersects every orbit of the
action, i.e., \( G \cdot \mathcal{C} = X \), and is such that for some neighborhood of the identity
\( U \subseteq G \) one has
\begin{equation}
  \label{eq:10}
  U \cdot x \, \cap \, U \cdot y = \es \quad \textrm{whenever \( x, y \in \mathcal{C} \) are distinct.}
\end{equation}

We will typically assume \( U \) to be symmetric and compact.  When for a set \( U \) the cross
section \( \mathcal{C} \) satisfies \eqref{eq:10}, we say that the cross section is
\emph{\( U \)-lacunary}.  Frequently the definition of a cross section is relaxed by omitting the
lacunarity condition and requiring the countability of intersections with orbits instead, but since
all the cross sections in our work will be lacunary, we choose to adopt the stronger
definition. 

Lacunarity says that distinct points within an orbit are never ``too close'' to each other.  The
complementary notion of not having ``large gaps'' within orbits is called cocompactness.  A cross
section \( \mathcal{C} \) is \emph{cocompact} if there exists a compact neighborhood of the identity
\( V \subseteq G \) such that \( V \cdot \mathcal{C} = X \), and \( \mathcal{C} \) is said to be
\( V \)-cocompact in that case.  Finally, we say that a cross section \( \mathcal{C} \) is a
\emph{maximal} \( U \)-lacunary cross section if it is \( U \)-lacunary and for any
\( z \in X \setminus \mathcal{C} \) the set \( \mathcal{C} \cup \{z\} \) is no longer a
\( U \)-lacunary cross section.  When \( U \) is a symmetric neighborhood of the identity,
a \( U \)-lacunary cross section is maximal if and
only if it is \( U^{2} \)-cocompact.

\addpicturenocap{r}{Lacunary_cross_section}{39.558mm}{Lacunary and cocompact}
\stepcounter{figure}

To illustrate this concepts consider, for instance, the case \( G = \mathbb{R}^{2} \) and suppose
additionally that the action \( \mathbb{R}^{2} \acts X \) is free.  We may therefore identify each
orbit with an affine\footnote{By affine we mean a plane with no distinguished origin.  It
  nevertheless carries all the structures from \( \mathbb{R}^{2} \) which are translation invariant:
  Euclidean distance, Lebesgue measure, etc. } copy of the plane.  Take \( U \) to be a
ball of radius \( r \) around the origin in \( \mathbb{R}^{2} \).  A cross section
\( \mathcal{C} \subset X \) is \( U \)-lacunary if and only if balls of radius \( r \)
around distinct points in \( \mathcal{C} \) do not intersect.  It is maximal
\( U \)-lacunary if moreover any point in the orbit is at most \( 2r \) from a point in
\( \mathcal{C} \).

A theorem of Kechris \cite[Corollary 1.2]{kechris_countable_1992} establishes existence of
\( U \)-lacunary cross sections for any action \( G \acts X \) of a locally compact group and any
given compact neighborhood of the identity \( U \subseteq G \).
Clinton Conley proved that one can always find a cocompact cross
section.

\begin{theorem-nn}[Conley]
  Any Borel action of a locally compact group admits a cocompact cross section.
\end{theorem-nn}

Conley's argument uses \( G_{0} \)-dichotomy.
For the purpose of completeness we present in this section an
elementary construction of enlarging any cross section into a maximal
one.

Let \( \mathcal{C} \subseteq X \) be a \( W \)-lacunary cross section, where
\( W \subseteq G \) is a compact symmetric neighborhood of the identity.  We turn
\( \mathcal{C} \) into a Borel graph with the set of edges \( F_{W} \) by putting an edge
between \( x \) and \( y \) whenever there
exists \( g \in W \) such that \( gx = y \):
\[ F_{W} = \bigl\{ (x,y) \in \mathcal{C} \times \mathcal{C} : \exists g \in W\
gx = y \bigr\}. \]

Lacunarity of \( \mathcal{C} \) ensures that the
degree of any vertex in \( F_{W} \) is finite (in fact, the degree is uniformly bounded).  A subset
\( \mathcal{A} \subseteq \mathcal{C} \) is said to be \emph{\( F_{W} \)-independent} if
there are no edges between its points:
\[ x \ne y \implies (x,y) \not \in F_{W} \quad \textrm{for all \( x, y \in \mathcal{A} \)}
.\]

The proof of \cite[Lemma 1.17]{jackson_countable_2002} shows that \( \mathcal{C} \) can be written
as a disjoint union of countably many Borel \( F_{W} \)-independent subsets.  We reproduce
the argument for the reader's convenience.

\begin{lemma}
  \label{lem:F-independent-union}
  Let \( \mathcal{C} \subseteq X \) be a cross section and let \( W \subseteq G \) be a
  compact symmetric neighborhood of the identity.  There exist Borel
  \( F_{W} \)-independent subsets \( \mathcal{A}_{n} \subseteq \mathcal{C} \)
  such that \( \mathcal{C} = \bigsqcup_{n \in \mathbb{N}} \mathcal{A}_{n} \).
\end{lemma}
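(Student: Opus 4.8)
The plan is to recast the lemma as a Borel colouring problem for the graph $F_{W}$ and to carry out the colouring greedily by means of the Feldman--Moore theorem. Two preliminary facts do most of the structural work. First, the orbit equivalence relation $E := E_{X}^{G}$ restricted to $\mathcal{C}$ is a \emph{countable} Borel equivalence relation: writing $G = \bigcup_{n} K_{n}$ as a countable union of compact sets (possible since $G$ is locally compact and second countable) and letting $U$ be a compact symmetric neighbourhood of the identity witnessing the lacunarity of $\mathcal{C}$, one checks that $K x \cap \mathcal{C}$ is finite for every compact $K \subseteq G$ and every $x$: cover $K$ by finitely many translates $V g_{1}, \dots, V g_{m}$ of a symmetric open $V$ with $V V \subseteq U$, and observe that if $z = v_{1} g_{i} x$ and $z' = v_{2} g_{i} x$ both lie in $\mathcal{C}$ then $z' = (v_{2} v_{1}^{-1}) z \in U z \cap U z'$, contradicting $U$-lacunarity unless $z = z'$. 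Hence each $E$-class $G x \cap \mathcal{C} = \bigcup_{n} (K_{n} x \cap \mathcal{C})$ is countable, and Borelness of $E|_{\mathcal{C}}$ is routine. Second, taking $K = W$ in the very same computation bounds $|\mathcal{C} \cap W x|$ by the constant $m$, so every vertex of $F_{W}$ has degree at most some fixed $D$. With this, producing the sets $\mathcal{A}_{n}$ is exactly producing a Borel proper colouring of $F_{W}$, and since the degree is bounded one may even aim for finitely many colours.

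Given these, I would invoke the Feldman--Moore theorem to write $E|_{\mathcal{C}} = \bigcup_{n} \mathrm{graph}(\varphi_{n})$ for Borel automorphisms $\varphi_{n} \colon \mathcal{C} \to \mathcal{C}$ of $E|_{\mathcal{C}}$ with $\varphi_{0} = \mathrm{id}$. These let one enumerate, in a Borel way and within each orbit, the at most $D$ neighbours of a point $x$: the $i$-th neighbour is $\varphi_{n_{i}(x)}(x)$, where $n_{i}(x)$ is the $i$-th index $n$ for which $\varphi_{n}(x)$ is an $F_{W}$-neighbour of $x$ distinct from $\varphi_{m}(x)$ for all $m < n$. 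Fixing in addition a Borel linear order on $\mathcal{C}$ (available since $\mathcal{C}$ is a standard Borel space), one runs the greedy colouring: process the points of each $E$-class in the order in which the $\varphi_{n}$ reveal them, give $x$ the least colour not yet taken by one of its previously treated neighbours, and break a tie between two adjacent points treated at the same moment in favour of the smaller one. Setting $\mathcal{A}_{n}$ to be the set of points of colour $n$ (with $\mathcal{A}_{n} = \varnothing$ once the colours run out) yields a partition $\mathcal{C} = \bigsqcup_{n} \mathcal{A}_{n}$ into $F_{W}$-independent Borel pieces.

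The step I expect to require genuine care is verifying that this greedy recipe really defines a \emph{Borel} total function on $\mathcal{C}$. The recursion is not along a well-founded order — the $E$-classes are infinite and the chosen order on them need not be a well-order — so one cannot simply appeal to transfinite recursion; the point of the bounded-degree (or merely locally finite) hypothesis is precisely that only a finite amount of information about the neighbourhood of $x$ influences its colour, and a finitary bookkeeping then shows both that every point receives a colour and that the resulting assignment is Borel. This is the statement that a Borel graph of degree at most $D$ has Borel chromatic number at most $D + 1$, due to Kechris, Solecki, and Todorcevic, and it is the content whose proof is reproduced from \cite[Lemma 1.17]{jackson_countable_2002}. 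The remaining verifications — Borelness of $E|_{\mathcal{C}}$, and that each $\mathcal{A}_{n}$ is $F_{W}$-independent — are straightforward.
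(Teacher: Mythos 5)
Your two structural observations (that $E|_{\mathcal{C}}$ is a countable Borel equivalence relation and that $F_{W}$ has uniformly bounded degree) are correct, but neither is what the lemma needs, and neither appears in the paper's argument. The paper — reproducing \cite[Lemma 1.17]{jackson_countable_2002} — uses only local finiteness of $F_{W}$ and a separating-family trick, with no Feldman--Moore decomposition and no greedy colouring: fix a countable family $(Z_{n})$ of Borel subsets of $\mathcal{C}$ that separates points and is closed under finite intersections. Since $F_{W}[x]$ is finite, for each $x$ there is some $n$ with $F_{W}[x] \cap Z_{n} = \{x\}$. The function $\phi(x) = \min\{n : F_{W}[x] \cap Z_{n} = \{x\}\}$ is Borel, and each $\phi^{-1}(n)$ is $F_{W}$-independent: if $x \neq y$ were adjacent and both in $\phi^{-1}(n)$, then $y \in F_{W}[x]$ forces $y \notin Z_{n}$, contradicting $y \in F_{W}[y] \cap Z_{n}$. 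Set $\mathcal{A}_{n} = \phi^{-1}(n)$ and you are done.

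The step you flagged as requiring care is precisely where your proposal fails, and the appeal you make does not repair it. Processing points ``in the order in which the $\varphi_{n}$ reveal them'' is not a valid recursion: there is no distinguished base point in an $E$-class from which to measure revelation time (any Borel choice of base points would make $E|_{\mathcal{C}}$ smooth, which you cannot assume), and the relation ``$y$ is revealed from $x$ before $x$ is revealed from $y$'' is neither transitive nor well-founded. Bounded degree by itself does not make a greedy pass along a non-well-founded order meaningful, so the promised ``finitary bookkeeping'' is not a proof. The standard way to obtain a $(D+1)$-colouring from bounded degree does use a greedy pass, but the pass is along the well-ordered levels $c^{-1}(0), c^{-1}(1), \dots$ of a \emph{countable} Borel colouring $c$, i.e.\ it already presupposes the present lemma. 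Finally, the attribution is off: \cite[Lemma 1.17]{jackson_countable_2002} asserts and proves the countable-colouring statement (by the separating-family argument above), not the Kechris--Solecki--Todor\v{c}evi\'{c} $D{+}1$ bound, and no greedy colouring appears there. Replacing your colouring step by the separating-family argument gives a correct proof and makes the degree bound and the Feldman--Moore machinery unnecessary.
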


\begin{proof}
  Let \( (Z_{n})_{n=1}^{\infty} \) be a countable family of Borel subsets of \( \mathcal{C} \) which
  separates points and which is closed under finite intersections.  Since \( F_{W} \) is
  locally finite, for any \( x \in \mathcal{C} \) there exists \( n \) such that
  \( F_{W}[x] \cap Z_{n} = \{x\} \), where
  \[ F_{W}[x] = \{ y \in \mathcal{C} : (x,y) \in F_{W}\}. \]
  Let \( \phi : \mathcal{C} \to \mathbb{N} \) be given by
  \[ \phi(x) = \min \bigl\{\, n \in \mathbb{N} : F_{W}[x] \cap Z_{n} = \{x\}\, \bigr\}. \]
  The function \( \phi \) is Borel and \( \phi^{-1}(n) \) is \( F_{W} \)-independent for
  each \( n \).  We may therefore set \( \mathcal{A}_{n} = \phi^{-1}(n) \).
\end{proof}

We now need a refinement of the notion of maximality for a cross section.  Let \( \mathcal{C}
\subseteq X \) be a \( U \)-lacunary cross section.  Given a set \( Y
\subseteq X \), we say that \( \mathcal{C} \) is a \emph{maximal \( U \)-lacunary cross section
in \( Y \)} if for no \( y \in Y \setminus \mathcal{C} \) the set \( \mathcal{C} \cup \{y\} \) is \(
U \)-lacunary.  In other words, \( \mathcal{C} \) cannot be enlarged by an element from \(
Y \) while keeping \( U \)-lacunarity.

\begin{lemma}
  \label{lem:enlarging-with-discrete-base}
  Let \( U \) and \( V \) be compact symmetric neighborhoods of the identity in \( G \), and let
  \( \mathcal{C} \) be a \( U \)-lacunary cross section.
  Set \( W = V \cdot U^{2} \cdot V \), and let \( \mathcal{A} \subseteq \mathcal{C} \)
  be an \( F_{W} \)-independent Borel set.  There exists a maximal \( U \)-lacunary in
  \( V \cdot \mathcal{A} \)\, Borel cross section \( \mathcal{D} \) that moreover contains
  \( \mathcal{C} \).
\end{lemma}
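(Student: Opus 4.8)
The plan is to realise \( \mathcal{D} \) as \( \mathcal{C} \) together with a carefully chosen \( U \)-lacunary subset of the shell \( V \cdot \mathcal{A} \). Say that \( x \) \emph{conflicts} with \( y \) when \( U x \cap U y \neq \es \), equivalently \( y \in U^{2} x \). The first observation is where the hypothesis is used: since \( \mathcal{A} \) is \( F_{W} \)-independent with \( W = V U^{2} V \), for distinct \( a, a' \in \mathcal{A} \) one cannot have \( U^{2}(V a) \cap (V a') \neq \es \), for that would put \( a' \in V U^{2} V a = W a \). Hence the blobs \( \{ V \cdot a : a \in \mathcal{A} \} \) are pairwise disjoint and, in fact, \( U^{2} \)-separated. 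Now set \( \mathcal{B} := (V \cdot \mathcal{A}) \setminus U^{2} \mathcal{C} \), the points of the shell that do not conflict with \( \mathcal{C} \). Lacunarity of \( \mathcal{C} \) makes the family \( \{ U^{2} c : c \in \mathcal{C} \} \) locally finite in every orbit, so \( U^{2}\mathcal{C} \) is relatively closed on each orbit and \( \mathcal{B} \) is Borel; writing \( B_{a} := \mathcal{B} \cap (V a) \), each \( B_{a} \) is relatively open in the compact orbit-patch \( V a \).

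The one genuinely delicate step is to select, in a Borel fashion, a maximal \( U \)-lacunary subset of the ``fat'' set \( \mathcal{B} \). I would discretise: fix a countable dense \( Q \subseteq V \), let \( \mathcal{Q} := (Q \cdot \mathcal{A}) \cap \mathcal{B} \), and note that \( \mathcal{Q} \) meets every orbit in a countable set (as \( \mathcal{A} \subseteq \mathcal{C} \) is lacunary, hence countable on each orbit, and \( Q \) is countable). Fixing an enumeration of \( Q \), a direct greedy recursion along it produces a Borel \( \mathcal{S} \subseteq \mathcal{Q} \) that is \( U \)-lacunary and maximal such \emph{inside \( \mathcal{Q} \)}; here one only has to test a candidate \( q \cdot a \) against the already-admitted points of the same blob \( V a \), by \( U^{2} \)-separation of blobs. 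I then upgrade maximality from \( \mathcal{Q} \) to \( \mathcal{B} \): fix \( a \); the set \( T_{a} := \mathcal{S} \cap (V a) \) is a \( U \)-lacunary subset of a patch of a locally compact orbit, hence finite, so \( U^{2} T_{a} \) is compact and therefore relatively closed in \( V a \). If some \( y \in B_{a} \) failed to conflict with \( T_{a} \), then \( B_{a} \setminus U^{2} T_{a} \) would be a non-empty relatively open subset of \( V a \) containing \( y \); since \( Q \cdot a \) is dense in \( V a \) it would meet this set in a point of \( \mathcal{Q} \setminus T_{a} \) still missing \( U^{2} T_{a} \), contradicting maximality of \( \mathcal{S} \) in \( \mathcal{Q} \). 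Thus \( T_{a} \) is maximal \( U \)-lacunary in \( B_{a} \) for every \( a \in \mathcal{A} \).

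Finally set \( \mathcal{D} := \mathcal{C} \cup \mathcal{S} \). It is Borel and contains \( \mathcal{C} \), so \( G \cdot \mathcal{D} = X \). It is \( U \)-lacunary: a conflict inside \( \mathcal{C} \) is barred by lacunarity of \( \mathcal{C} \); a conflict between a point of \( \mathcal{C} \) and a point of \( \mathcal{S} \) is barred because \( \mathcal{S} \subseteq \mathcal{B} \) misses \( U^{2}\mathcal{C} \); and a conflict inside \( \mathcal{S} \) would, by \( U^{2} \)-separation of the blobs, have to occur inside a single \( V a \), where \( \mathcal{S} \cap (V a) = T_{a} \) is \( U \)-lacunary. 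And \( \mathcal{D} \) is maximal \( U \)-lacunary in \( V \cdot \mathcal{A} \): given \( y \in (V \cdot \mathcal{A}) \setminus \mathcal{D} \), if \( y \in U^{2}\mathcal{C} \) then \( y \) conflicts with \( \mathcal{C} \subseteq \mathcal{D} \); otherwise \( y \in B_{a} \setminus T_{a} \) for \( a \) the unique point of \( \mathcal{A} \) with \( y \in V a \), and by maximality of \( T_{a} \) in \( B_{a} \) the point \( y \) conflicts with \( T_{a} \subseteq \mathcal{D} \). In either case \( \mathcal{D} \cup \{ y \} \) is not \( U \)-lacunary.

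I expect the main obstacle to be precisely the passage in the second paragraph: controlling a maximal \( U \)-lacunary subset of the uncountable shell \( \mathcal{B} \) Borel-ly, since for a general Borel set there is no canonical such subset. The \( F_{W} \)-independence of \( \mathcal{A} \) is exactly what rescues this, by decoupling the conflict graph on \( \mathcal{B} \) into the independent finite pieces \( B_{a} \); this lets one work with the finite — hence topologically closed — sets \( U^{2} T_{a} \), so that a Borel greedy choice over a countable dense net can be certified maximal via an elementary density argument. The remaining ingredients — Borelness of \( \mathcal{B} \), the packing bound giving finiteness of \( T_{a} \), and the bookkeeping of the greedy recursion — are routine.
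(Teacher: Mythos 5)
Your construction is essentially the same greedy recursion as the paper's: you enumerate a countable dense subset of $V$ and admit a candidate $q \cdot a$ precisely when it neither conflicts with $\mathcal{C}$ nor with previously admitted points; the paper does exactly this with its sets $Y_{k}$ and $\mathcal{D} = \mathcal{C} \cup \bigcup_{i} f_{i}(Y_{i})$. Where you genuinely diverge is in certifying maximality. The paper fixes a candidate $y = gx$, extracts a subsequence $f_{n_{k}} \to g$ all of whose images conflict with the same point $z \in \mathcal{D}$ via $h_{k} \in U^{2}$, and then invokes Miller's theorem (closedness of stabilizers) to pass to the limit and conclude that $y$ itself conflicts with $z$. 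You instead observe that the conflict set local to a blob, $U^{2} T_{a}$, is a finite union of translates of $U^{2}$ and is therefore closed in the orbit topology on $Va$, so failure to conflict would carve out a non-empty relatively open set which the dense net $Q\cdot a$ must enter — contradicting greedy maximality inside $\mathcal{Q}$. This is a cleaner and more direct argument, and the $U^{2}$-separation of blobs (your key use of $F_{W}$-independence) is exactly what reduces it to a local finiteness statement. One point worth making explicit in your write-up: Section~\ref{sec:esixt-cocomp-lacun} does \emph{not} assume the action is free, and the orbit map $g \mapsto ga$ is only Borel, not continuous. The topology you use on $Va$ (in which $U^{2}t$ is compact, $U^{2}\mathcal{C}$ is closed, and $Q\cdot a$ is dense) is the topology transferred from $G$ — a bijective transfer if the action is free, and for non-free actions a quotient topology whose well-definedness again rests on closedness of stabilizers. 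So Miller's theorem has not really been eliminated, only relocated; you should acknowledge this to keep the argument honest at the stated generality. With that caveat the proof is correct.
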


\begin{proof}
  Let \( (f_{n}) \) be a countable family dense in \( V \), set
  \begin{displaymath}
    \begin{aligned}
      Y_{0} &= \{ x \in \mathcal{A} : f_{0}(x) \not \in U^{2} \cdot \mathcal{C} \},\\
      Y_{1} &= \bigl\{ x \in \mathcal{A} : f_{1}(x) \not \in U^{2} \cdot \bigl(\mathcal{C} \cup
      f_{0}(Y_{0})\bigr) \bigr\},\\
      Y_{2} &= \bigl\{ x \in \mathcal{A} : f_{2}(x) \not \in U^{2} \cdot \bigl(\mathcal{C} \cup
      f_{0}(Y_{0}) \cup f_{1}(Y_{1})\bigr)\bigr\},\\
      &\ ...\\
      Y_{k} &= \bigl\{ x \in \mathcal{A} : f_{k}(x) \not \in U^{2} \cdot \bigl( \mathcal{C}
      \cup \bigcup_{i < k}
      f_{i}(Y_{i}) \bigr)\bigr\}.\\
  \end{aligned}
\end{displaymath}
In words, \( Y_{0} \) contains all \( x \in \mathcal{A} \) such that \( f_{0}(x) \) can be added to
\( \mathcal{C} \) while preserving \( U \)-lacunarity; \( Y_{1} \) consists of those
\( x \in \mathcal{A} \) such that \( f_{1}(x) \) can be added to \( \mathcal{C} \cup f_{0}(Y_{0}) \)
keeping \( U \)-lacunarity, etc.

Note that \( \mathcal{C} \cup f_{0}(Y_{0}) \) is \( U \)-lacunary.  Indeed, suppose
\( y,z \in \mathcal{C} \cup f_{0}(Y_{0}) \) are such that \( g_{1}y = g_{2}z \) for some
\( g_{1}, g_{2} \in U \) and \( y \ne z \).  Since \( \mathcal{C} \) is
\( U \)-lacunary, at least one of \( y, z \) has to be in \( f_{0}(Y_{0}) \), say
\( z \in f_{0}(Y_{0}) \).  If \( y \in \mathcal{C} \), then \( z = g_{2}^{-1}g_{1} y \) implies
\( z \in U^{2} \cdot \mathcal{C} \), contradicting the construction of \( Y_{0} \).  Hence
\( y \in f_{0}(Y_{0}) \).  Let \( x_{y}, x_{z} \in Y_{0} \) be such that \( y = f_{0} x_{y} \),
\( z = f_{0} x_{z} \).  Equality \( z = g_{2}^{-1}g_{1} y \) implies
\( x_{z} = f_{0}^{-1} g_{2}^{-1}g_{1} f_{0} (x_{y}) \).  Since
\[ f_{0}^{-1} g_{2}^{-1}g_{1} f_{0} \in V \cdot U^{2} \cdot V =
W, \]
and \( x_{y}, x_{z} \in \mathcal{A} \), the set \( \mathcal{A} \) is not
\( F_{W} \)-independent.  Contradiction.

This shows that \( \mathcal{C} \cup f_{0}(Y_{0}) \) is \( U \)-lacunary.  Similarly one
checks that \( \mathcal{C} \cup f_{0}(Y_{0}) \cup f_{1}(Y_{1}) \) is \( U \)-lacunary, and
in fact \( \mathcal{D} = \mathcal{C} \cup \bigcup_{i} f_{i}(Y_{i}) \) is \( U \)-lacunary.  It
remains to see that \( \mathcal{D} \) is maximal \( U \)-lacunary in
\( V \cdot \mathcal{A} \).

To begin with, for any \( n \) and any \( x \in \mathcal{A} \) such that
\( f_{n}(x) \not \in \mathcal{D} \), \( \mathcal{D} \cup \{f_{n}(x)\} \) is not
\( U \)-lacunary, for otherwise
\( \mathcal{C} \cup \bigcup_{i < n} f_{i}(Y_{i}) \cup \{f_{n}(x)\} \) would be
\( U \)-lacunary, whence \( x \in Y_{n} \), implying that
\( f_{n}(x) \in f_{n}(Y_{n}) \subseteq \mathcal{D} \).  In other words, \( \mathcal{D} \) cannot be
enlarged to a \( U \)-lacunary cross section by adding an element of the form
\( f_{n}(x) \) for some \( x \in \mathcal{A} \).

Suppose there is some \( x \in \mathcal{A} \) and \( y \in V \cdot x \) such that
\( y \not \in \mathcal{D} \) and \( \mathcal{D} \cup \{y\} \) is \( U \)-lacunary.  Let
\( g \in V \) be such that \( gx = y \) and pick \( (n_{k})_{k=0}^{\infty} \) for which
\( f_{n_{k}} \to g \).  Since \( U \) is a neighborhood of the identity, we may assume
that \( f_{n_{k}} g^{-1} \in U \) for all \( k \); in particular
\( f_{n_{k}} x \not \in \mathcal{D} \) for all \( k \).  But we showed that \( \mathcal{D} \) cannot
be enlarged to a \( U \)-lacunary cross section by an element of the form
\( f_{n_{k}} x \), whence there are
\( z_{k} \in (U^{2} \cdot V \cdot x ) \cap \mathcal{D}\) such that
\( h_{k} f_{n_{k}} x = z_{k} \) for some \( h_{k} \in U^{2} \).  Since
\( (U^{2} \cdot V \cdot x) \cap \mathcal{D}\) is finite, by passing to a
subsequence we may assume that \( h_{k} f_{n_{k}} x = z \) for some fixed \( z \in \mathcal{D} \)
and all \( k \).  Recall that \( U \) is compact, and so by passing to a subsequence once
again, we may assume that \( h_{k} \to h \in U^{2} \).  Let \( p \in G \) be some element such
that \( p z = x \), for example \( p = (h_{0} f_{n_{0}})^{-1} \).  Thus
\( p h_{k} f_{n_{k}} x = x \) for all \( k \), i.e., elements \( p h_{k} f_{n_{k}} \) are in the
stabilizer of \( x \).  By Miller's Theorem \cite[Theorem 9.17]{kechris_classical_1995} stabilizers
are closed, and thus \( phg x = x \), hence \( h g x = p^{-1} x = z \).  But \( g x = y \),
\( h \in U^{2} \), and \( z \in \mathcal{D} \), whence \( \mathcal{D} \cup \{y\} \) is not
\( U \)-lacunary.  This proves the claim and the lemma.
\end{proof}

\begin{lemma}
  \label{lem:enlarging-over-K}
  Let \( U \) and \( V \) be a compact symmetric neighborhoods of the identity in \( G \), and let
  \( \mathcal{C} \) be a \( U \)-lacunary cross section.  There exists a \( U \)-lacunary cross
  section \( \mathcal{D} \) containing \( \mathcal{C} \) which is maximal \( U \)-lacunary in
  \( V \cdot \mathcal{C} \).
\end{lemma}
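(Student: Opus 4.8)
The plan is to derive Lemma~\ref{lem:enlarging-over-K} from Lemma~\ref{lem:enlarging-with-discrete-base} by partitioning \( \mathcal{C} \) into countably many \( F_{W} \)-independent Borel pieces and enlarging over one piece at a time. Set \( W = V \cdot U^{2} \cdot V \). First I would apply Lemma~\ref{lem:F-independent-union} to write \( \mathcal{C} = \bigsqcup_{n \in \mathbb{N}} \mathcal{A}_{n} \) with each \( \mathcal{A}_{n} \subseteq \mathcal{C} \) a Borel \( F_{W} \)-independent set. Two observations will be used repeatedly: \( F_{W} \)-independence of a set is intrinsic (it just says no two distinct points of the set are related by an element of \( W \)), so each \( \mathcal{A}_{n} \) remains \( F_{W} \)-independent when viewed inside any larger cross section; and \( V \cdot \mathcal{C} = \bigcup_{n} V \cdot \mathcal{A}_{n} \).

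Next I would recursively construct an increasing chain of Borel \( U \)-lacunary cross sections \( \mathcal{C} = \mathcal{D}_{0} \subseteq \mathcal{D}_{1} \subseteq \cdots \). Given \( \mathcal{D}_{n-1} \supseteq \mathcal{C} \), apply Lemma~\ref{lem:enlarging-with-discrete-base} with the same \( U \) and \( V \) (hence the same \( W \)), the cross section \( \mathcal{D}_{n-1} \), and the \( F_{W} \)-independent set \( \mathcal{A}_{n} \subseteq \mathcal{C} \subseteq \mathcal{D}_{n-1} \); this yields a Borel \( U \)-lacunary cross section \( \mathcal{D}_{n} \supseteq \mathcal{D}_{n-1} \) that is maximal \( U \)-lacunary in \( V \cdot \mathcal{A}_{n} \). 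Here I would note that maximality is monotone under enlargement: if \( \mathcal{D}_{n-1} \) is maximal \( U \)-lacunary in a set \( Y \) and \( \mathcal{D}_{n} \supseteq \mathcal{D}_{n-1} \), then any \( y \in Y \setminus \mathcal{D}_{n} \) also lies outside \( \mathcal{D}_{n-1} \), so \( \mathcal{D}_{n-1} \cup \{y\} \) is not \( U \)-lacunary, and therefore neither is \( \mathcal{D}_{n} \cup \{y\} \supseteq \mathcal{D}_{n-1} \cup \{y\} \). Consequently every \( \mathcal{D}_{m} \) with \( m \ge n \) stays maximal \( U \)-lacunary in \( V \cdot \mathcal{A}_{n} \).

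Finally, set \( \mathcal{D} = \bigcup_{n} \mathcal{D}_{n} \). It is Borel as an increasing union of Borel sets and is a cross section because it contains \( \mathcal{C} \); it is \( U \)-lacunary since any two of its points already lie in some common \( \mathcal{D}_{n} \), which is \( U \)-lacunary. To see that \( \mathcal{D} \) is maximal \( U \)-lacunary in \( V \cdot \mathcal{C} \), suppose \( y \in (V \cdot \mathcal{C}) \setminus \mathcal{D} \) with \( \mathcal{D} \cup \{y\} \) still \( U \)-lacunary; choose \( n \) with \( y \in V \cdot \mathcal{A}_{n} \). Then \( y \notin \mathcal{D}_{n} \) and \( \mathcal{D}_{n} \cup \{y\} \subseteq \mathcal{D} \cup \{y\} \) is \( U \)-lacunary (lacunarity passes to subsets), contradicting maximality of \( \mathcal{D}_{n} \) in \( V \cdot \mathcal{A}_{n} \). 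I do not expect a genuine obstacle in this argument; the only points that require attention \textemdash{} and they are immediate from the definitions \textemdash{} are that \( F_{W} \)-independence of the pieces \( \mathcal{A}_{n} \) survives passing to a larger ambient cross section, and that the maximality achieved at stage \( n \) is inherited by all later \( \mathcal{D}_{m} \).
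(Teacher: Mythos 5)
Your proposal is correct and follows essentially the same route as the paper: set \( W = V \cdot U^{2} \cdot V \), decompose \( \mathcal{C} = \bigsqcup_n \mathcal{A}_n \) via Lemma~\ref{lem:F-independent-union}, build an increasing chain of \( U \)-lacunary cross sections by applying Lemma~\ref{lem:enlarging-with-discrete-base} to the pieces \( \mathcal{A}_n \) one at a time, and take the union. The only difference is that you spell out the ``easy to see'' final step (monotonicity of maximality under enlargement and persistence of \( F_W \)-independence in the ambient cross section), which the paper leaves to the reader.
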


\begin{proof}
  Let \( W = V \cdot U^{2} \cdot V \).  By Lemma
  \ref{lem:F-independent-union} we may write \( \mathcal{C} = \bigsqcup_{n} \mathcal{A}_{n} \) with
  each \( \mathcal{A}_{n} \) being Borel and \( F_{W} \)-independent.  Using Lemma
  \ref{lem:enlarging-with-discrete-base} we construct inductively \( U \)-lacunary cross
  sections \( \mathcal{C} \subseteq \mathcal{C}_{0} \subseteq \mathcal{C}_{1} \subseteq \cdots \)
  such that \( \mathcal{C}_{i} \) is maximal \( U \)-lacunary in
  \( V \cdot \mathcal{A}_{i} \).  It is easy to see that
  \( \mathcal{D} = \bigcup_{n} \mathcal{C}_{n} \) is maximal \( U \)-lacunary in
  \( V \cdot \mathcal{C} \).
\end{proof}

\begin{theorem}
  \label{prop:existence-of-cocompact}
  Let \( U \) be a compact symmetric neighborhood of the identity in \( G \).  For every
  \( U \)-lacunary Borel cross section \( \mathcal{C} \subseteq X \) there exists a maximal Borel
  \( U \)-lacunary cross section \( \mathcal{D} \subseteq X \) such that moreover
  \( \mathcal{C} \subseteq \mathcal{D} \).  The cross section \( \mathcal{D} \) is therefore
  \( U^{2} \)-cocompact.
\end{theorem}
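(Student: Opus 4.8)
The plan is to exhaust $X$ over an increasing sequence of compact sets and apply Lemma~\ref{lem:enlarging-over-K} repeatedly. Since $G$ is locally compact and Polish, it is $\sigma$-compact, so we may fix an increasing sequence $V_{0} \subseteq V_{1} \subseteq \cdots$ of compact symmetric neighborhoods of the identity with $\bigcup_{n} V_{n} = G$; we may also assume $V_{n}$ is large enough that $V_{n} \supseteq U$ for all $n$. Starting from $\mathcal{C}_{-1} = \mathcal{C}$, I would use Lemma~\ref{lem:enlarging-over-K} with the pair $(U, V_{n})$ to produce, inductively, $U$-lacunary Borel cross sections $\mathcal{C}_{-1} \subseteq \mathcal{C}_{0} \subseteq \mathcal{C}_{1} \subseteq \cdots$ such that $\mathcal{C}_{n}$ is maximal $U$-lacunary in $V_{n} \cdot \mathcal{C}_{n-1}$, and a fortiori maximal $U$-lacunary in $V_{n} \cdot \mathcal{C}_{-1} = V_{n} \cdot \mathcal{C}$. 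Then set $\mathcal{D} = \bigcup_{n} \mathcal{C}_{n}$.

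The set $\mathcal{D}$ is Borel as a countable increasing union of Borel sets, it is a cross section because it contains the cross section $\mathcal{C}$, and it is $U$-lacunary: any potential violation of \eqref{eq:10} would involve two points $y, z \in \mathcal{D}$, hence $y, z \in \mathcal{C}_{n}$ for some single $n$ by the increasing property, contradicting $U$-lacunarity of $\mathcal{C}_{n}$. It remains to check maximality of $\mathcal{D}$ as a $U$-lacunary cross section (i.e.\ maximal in all of $X$, not just in some $V_{n} \cdot \mathcal{C}$). Let $z \in X \setminus \mathcal{D}$. Since $\mathcal{C}$ is a cross section, $X = G \cdot \mathcal{C} = \bigcup_{n} V_{n} \cdot \mathcal{C}$, so $z \in V_{n} \cdot \mathcal{C}$ for some $n$. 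Because $\mathcal{C}_{n}$ is maximal $U$-lacunary in $V_{n} \cdot \mathcal{C}$ and $z \notin \mathcal{C}_{n}$ (as $z \notin \mathcal{D} \supseteq \mathcal{C}_{n}$), the set $\mathcal{C}_{n} \cup \{z\}$ is not $U$-lacunary. But $\mathcal{C}_{n} \subseteq \mathcal{D}$, so $\mathcal{D} \cup \{z\}$ is not $U$-lacunary either. Hence $\mathcal{D}$ is a maximal $U$-lacunary cross section.

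The final sentence of the statement, that $\mathcal{D}$ is $U^{2}$-cocompact, is then immediate from the fact recorded in the text preceding Lemma~\ref{lem:F-independent-union}: for a symmetric neighborhood $U$ of the identity, a $U$-lacunary cross section is maximal if and only if it is $U^{2}$-cocompact. I expect no serious obstacle here; the only point requiring a little care is ensuring that the inductive application of Lemma~\ref{lem:enlarging-over-K} genuinely yields maximality in the growing sets $V_{n} \cdot \mathcal{C}$ (rather than only in $V_{n} \cdot \mathcal{C}_{n-1}$), which follows since $\mathcal{C}_{n-1} \supseteq \mathcal{C}$ forces $V_{n} \cdot \mathcal{C}_{n-1} \supseteq V_{n} \cdot \mathcal{C}$ and maximality in a larger set implies maximality in a smaller one, together with the observation that $V_{n} \cdot \mathcal{C}$ exhausts $X$ by $\sigma$-compactness of $G$.
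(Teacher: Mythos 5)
Your proof is correct and follows essentially the same route as the paper: exhaust $G$ by compact symmetric neighborhoods $V_{n}$, apply Lemma~\ref{lem:enlarging-over-K} inductively to obtain $\mathcal{C}_{n}$ maximal $U$-lacunary in $V_{n} \cdot \mathcal{C}$, and take the increasing union. The only cosmetic differences (indexing from $-1$, spelling out that maximality in a larger set passes to any subset, and observing explicitly that $\bigcup_{n} V_{n} \cdot \mathcal{C} = X$) make the argument slightly more self-contained without changing its substance.
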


\begin{proof}
  Fix an increasing sequence \( (V_{n})_{n=1}^{\infty} \) of symmetric compact neighborhoods of the
  identity which cover \( G \), i. e., \( G = \bigcup_{n} V_{n} \).  Applying Lemma
  \ref{lem:enlarging-over-K}, construct \( U \)-lacunary cross sections
  \( \mathcal{C} = \mathcal{C}_{0} \subseteq \mathcal{C}_{1} \subseteq \mathcal{C}_{2} \subseteq
  \cdots \) such that \( \mathcal{C}_{i} \) is maximal \( U \)-lacunary in
  \( V_{i} \cdot \mathcal{C}_{i-1} \) (and in particular, \( \mathcal{C}_{i} \) is maximal
  \( U \)-lacunary in \( V_{i}\cdot \mathcal{C} \)).  We claim that
  \( \mathcal{D} = \bigcup_{n} \mathcal{C}_{n} \) is maximal \( U \)-lacunary, i.e., we claim that
  for no \( y \in X \setminus \mathcal{D} \) the cross section \( \mathcal{D} \cup \{y\} \) is
  \( U \)-lacunary.  Indeed, take \( y \in X \setminus \mathcal{D} \) and let
  \( x \in \mathcal{C} \) be such that \( y \in G x\).  Pick \( n \) so large that
  \( y \in V_{n} \cdot x \).  Since \( \mathcal{D} \) is maximal \( U \)-lacunary in
  \( V_{n} \cdot \mathcal{C} \), \( \mathcal{D} \cup \{y\} \) cannot be \( U \)-lacunary, and the
  claim follows.
\end{proof}

\section{Orbit equivalences of locally compact group actions}
\label{sec:orbit-equiv-locally}

From now on we consider only free actions of groups.

Let \( H \) be a locally compact group and let \( \lambda \) be a \emph{right} invariant Haar
measure on \( H \).  Suppose also that \( H \) acts freely on a standard Borel space \( X \).  Once
we select a point \( x \in X \), the orbit \( \orbit(x) \) can be identified with the group \( H \)
itself via
\[ H \ni h \mapsto hx \in \orbit(x). \]
This identification makes it possible to transfer the measure \( \lambda \) from \( H \) onto the
orbit \( \orbit(x) \) by setting for a set \( A \subseteq \orbit(x) \)
\[ \lambda_{x}(A) = \lambda \bigl(\{h \in H: hx \in A \}\bigr). \]
The right invariance of \( \lambda \) implies that \( \lambda_{x}(A) = \lambda_{y}(A) \) whenever
\( y \in \orbit(x) \):  if \( fx = y \), \( f \in H \), then
\[ \lambda_{y}(A) = \lambda \bigl( \{h \in H : h y \in A\} \bigr) = \lambda\bigl( \{h \in H : hfx
\in A\} \bigr) = [g := hf] = \lambda\bigl( \{ g \in H: gx \in A \} f^{-1} \bigr) =
\lambda_{x}(A), \]
and therefore the push forward of the Haar measure onto the orbit \( \orbit(x) \) is independent of
the base point.  When \( H \) is discrete, the measure on an orbit is just the counting measure, and
any orbit equivalence thus automatically preserves the counting measure.  This results in the fact
that any orbit equivalence induces a bijection between the sets of invariant measures on the phase
spaces.  In the non discrete world this is may no longer be the case.  To fix this, we introduce a
strengthening of OE which requires the isomorphism between phase spaces to preserve Haar measures on
orbits.  There are three increasingly restrictive rigorous formulations of this concept.

Let \( G_{1} \acts X_{1} \) and \( G_{2} \acts X_{2} \) be a pair of free Borel actions of locally
compact groups, and pick right invariant Haar measures \( \lambda_{1} \), \( \lambda_{2} \) on
\( G_{1} \) and \( G_{2} \) respectively.  We say that the actions \( G_{1} \acts X_{1} \) and
\( G_{2} \acts X_{2} \) are \emph{weakly Haar Orbit Equivalent} (abbreviated by wHOE) if there
exists an orbit equivalence \( \phi : X_{1} \to X_{2} \) such that for any \( x \in X_{1} \) the
push forward measure \( \phi_{*} \lambda_{1,x} \) is a multiple of \( \lambda_{2, \phi(x)} \).  In
other words, the actions are wHOE if there exists an orbit equivalence which sends a Haar measure on
each orbit onto a Haar measure.  Note that we allow the normalization to vary from orbit to orbit.
It is evident that the definition of wHOE does not depend on the choice of \( \lambda_{1} \) and
\( \lambda_{2} \).  Given a wHOE \( \phi : X_{1} \to X_{2} \) we may associate a
\emph{normalization} function
\[ \alpha_{\phi}^{\lambda_{1}, \lambda_{2}} = \alpha_{\phi} : X_{1} \to \mathbb{R}^{>0} \quad
\textrm{defined by the condition} \quad \phi_{*}\lambda_{1,x} = \alpha_{\phi}(x)
\lambda_{2,\phi(x)}. \]
Note that \( \alpha_{\phi}(x) = \alpha_{\phi}(y) \) whenever \( x \) and \( y \) are
\( E_{X_{1}}^{G_{1}} \)-equivalent.  The function \( \alpha_{\phi} \) does depend on the choice of
\( \lambda_{1} \) and \( \lambda_{2} \), but in a mild way: if
\( \alpha_{\phi}^{\lambda_{1}', \lambda_{2}'} \) is defined with respect to some other choice of
right Haar measures \( \lambda_{1}' \) and \( \lambda_{2}' \), then
\( \alpha_{\phi}^{\lambda_{1}', \lambda_{2}'} \) is a constant multiple of
\( \alpha_{\phi}^{\lambda_{1}, \lambda_{2}} \).  The property of \( \alpha_{\phi} \) being constant
is therefore independent of the choice of \( \lambda_{1} \) and \( \lambda_{2} \).  This allows us
to introduce the following definition.
\begin{definition}
  \label{def:haar-orbit-equivalence}
  Given a pair of free actions \( G_{1} \acts X_{1} \) and \( G_{2} \acts X_{2} \), we say that
  these actions are \emph{Haar Orbit Equivalent} (HOE) whenever there exists a weak Haar Orbit
  Equivalence \( \phi : X_{1} \to X_{2} \) with the constant normalization function: \(
  \alpha_{\phi}^{\lambda_{1}, \lambda_{2}} \equiv \mathrm{const} \) for some (equivalently, any)
  right Haar measures \( \lambda_{1} \), \( \lambda_{2} \) on \( G_{1} \) and \( G_{2} \) respectively.
\end{definition}
One could reformulate HOE by saying that given \( \lambda_{1} \) for a suitable choice of
\( \lambda_{2} \) the push forward \( \phi_{*}\lambda_{1,x} \) is equal to the measure
\( \lambda_{2, \phi(x)} \) for all \( x \in X_{1} \).

Sometimes one has a natural normalization choice of the Haar measure, the Euclidean space
\( \mathbb{R}^{d} \) being a notable example.  One may than wonder whether for a \emph{given choice}
of \( \lambda_{1} \) and \( \lambda_{2} \) on \( G_{1} \) and \( G_{2} \) there exists a HOE
\( \phi: X_{1} \to X_{2} \) such that the normalization function is constantly equal to one. This
brings us to our last and strongest definition.

\begin{definition}
  \label{def:lebesgue-orbit-equivalence}
  Let \( \lambda_{1} \) and \( \lambda_{2} \) be right Haar measures on \( G_{1} \) and \( G_{2} \),
  and let \( G_{1} \acts X_{1} \), \( G_{2} \acts X_{2} \) be a pair of free actions.  We say that
  these actions are \emph{Lebesgue Orbit Equivalent} (LOE) if there exists a HOE
  \( \phi : X_{1} \to X_{2} \) such that the corresponding normalization function is constantly equal to
  \( 1 \): \( \alpha_{\phi}^{\lambda_{1}, \lambda_{2}}(x) = 1 \) for all \( x \in X_{1} \).
\end{definition}

Equivalently, \( \phi_{*}\lambda_{1,x} = \lambda_{1,\phi(x)} \) holds for all \( x \in X_{1} \).
Whereas wHOE and HOE do not depend on the choice of \( \lambda_{1} \) and \( \lambda_{2} \), the
notion of LOE requires a choice of Haar measures.  We shall always equip Euclidean spaces
\( \mathbb{R}^{d} \) with the standard Lebesgue measure, and LOE between \( \mathbb{R}^{d} \)-flows
should always be understood with respect to that choice of Haar measures.

\section{Invariant measures on a cross sections and phase spaces}
\label{sec:invar-meas-cross-phase}

In this section \( G \) is assumed to be a unimodular\footnote{A locally compact
  group is \emph{unimodular} if its left Haar measure is also a right Haar measure, i.e., if the
  Haar measure is two-sided invariant.} locally compact Polish group acting freely on \( X \).  We
fix a Haar measure \( \lambda \) and a compatible proper\footnote{A compatible metric \( d \) on a
  locally compact group is \emph{proper} if all balls \( B_{r}(e) = \{ g \in G : d(g,e) \le r\} \)
  are compact. Existence of such metrics has been established in \cite{struble_metrics_1974}.} left
invariant metric \( d \) on \( G \).

\vspace*{2mm}
\addpicture{r}{Voronoi_tessellation}{64.759mm}{Voronoi
  tessellation\vspace{4mm}}{fig:voronoi-tessellation}
\subsection{Voronoi domains}
\label{sec:voronoi-domains}

\vspace*{-3mm}

Let \( E = E_{X}^{G} \) denote the orbit equivalence relation of the action:
\( (x, y) \in E \) whenever there exists some \( g \in G \) such that \( gx = y \).  Note that
freeness of the action implies uniqueness of such an element \( g \in G \).  The set \( E \) is
Borel and so is the function \( \rho : E \to G \) that assigns to a pair \( (x,y) \in E \) the unique
\( g \in G \) such that \( gx = y \).

Given a cross section for the action \( G \acts X \), we associate a decomposition of each orbit
into Voronoi domains, which are determined by the proximity to the points in the cross section as
measured by the metric \( d \).  Let us first recall the construction in the more familiar setting
of the Euclidean space followed by the formal definition of the general case.

Let \( \mathcal{C} \subseteq \mathbb{R}^{d} \) be a  cross section.  A
\emph{Voronoi domain} of a point \( c \in \mathcal{C} \) consists of all the points
\( x \in \mathbb{R}^{d} \) such that \( x \) is closer to \( c \) in the Euclidean distance than to
any other point in \( \mathcal{C} \).  Voronoi tessellation is the partition of \( \mathbb{R}^{d} \)
into Voronoi domains.  Figure~\ref{fig:voronoi-tessellation} shows a fragment of Voronoi tessellation on
\( \mathbb{R}^{2} \) determined by the five points.  The gray polygon is the Voronoi domain of the
central point.  Note that attribution of points on the boundary of domains is ambiguous.

In the general set up of a unimodular group action we start by fixing a Borel linear order
\( \preceq_{\mathcal{C}} \) on the cross section.  It will be used to distribute the boundary points
between the Voronoi domains.  The \emph{Voronoi tiling} determined by \( \mathcal{C} \) (also known
as Voronoi tessellation or Voronoi partition) is the set
\( \mathscr{V} \subseteq X \times \mathcal{C} \) given by
\[ \mathscr{V}= \Bigl\{ (x,c) : \forall a \in \mathcal{C} \cap (G \cdot c) \quad \Bigl(
d\bigl(\rho(x,c),e\bigr) < d\bigl(\rho(x,a),e\bigr) \Bigr) \textrm{ or } \Bigl(
d\bigl(\rho(x,c),e\bigr) = d\bigl( \rho(x,a),e \bigr) \textrm{ and } c \preceq_{\mathcal{C}} a
\Bigr) \Bigr\}. \]
In words, \( (x,c) \in \mathscr{V} \) if \( c \) is closer to \( x \) than any other point in
\( \mathcal{C} \), or, when there are several closest points in \( \mathcal{C} \), \( c \) is the
minimal one according to \( \preceq_{\mathcal{C}} \).  Note that by lacunarity, there can only be
finitely many closest points, so the minimal one always exists.  The \emph{Voronoi domain} of
\( c \in \mathcal{C} \) is the set
\[ \mathscr{V}_{c} = \{x \in X : (x,c) \in \mathscr{V} \}.\]
Note that if \( \mathcal{C} \) is cocompact, Voronoi domains \( \mathscr{V}_{c} \) are bounded (in the
sense that \( d(c, \,\cdot\,) : \mathscr{V}_{c} \to \mathbb{R} \) is bounded) uniformly in \( c \).

The set \( \mathscr{V} \) is an example of a tiling of the action \( G \acts X \).

\begin{definition}
  \label{def:general-tiling}
  A \emph{tiling} of \( G \acts X \) is a Borel set
  \( \mathscr{W} \subseteq X \times X \) satisfying the following properties.
  \begin{enumerate}[(i)]
  \item Projection \( \mathcal{C} \) of the set \( \mathscr{W} \) onto the second coordinate is
    Borel and forms a cross section of the action,
    \[ \mathcal{C} = \{c \in X: (x,c) \in X \textrm{ for some } x \in X\}. \]
  We refer to this cross section as the one \emph{associated} with \( \mathscr{W} \).  By definition \(
  \mathscr{W} \subseteq X \times \mathcal{C} \).
\item \( \mathscr{W} \subseteq E_{G} \), i.e., \( \mathscr{W} \) respects the orbit
  equivalence relation.
\item For any \( x \in X \) there exists a unique \( c \in \mathcal{C} \) with
  \( (x,c) \in \mathscr{W} \), i.e.,  \( \mathscr{W} \) is a graph of a Borel function
  \( w : X \to \mathcal{C} \).
  \item There exists a neighborhood of the identity \( U \subseteq G \) such that
    \( \mathcal{C} \) is \( U \)-lacunary and \( U \times \{c\} \subseteq \mathscr{W} \) for all
    \( c \in \mathcal{C} \).
  \end{enumerate}

  Given a tiling \( \mathscr{W} \), the domain of a point \( c \in \mathcal{C} \) is the set \(
  \mathscr{W}_{c} \) of all \( x \in X \) such that \( (x,c) \in \mathscr{W} \).  Geometrically any
  orbit \( \mathcal{O} \subseteq X \) is therefore tiled by sets \( \mathscr{W}_{c} \),  \( c \in
  \mathcal{O} \cap \mathcal{C}  \).

  We say that a tiling is \emph{bounded} if the cross section \( \mathcal{C} \) associated with \(
  \mathscr{W} \) is cocompact and domains \( \mathscr{W}_{c} \) are bounded uniformly in \( c \), in
  the sense that 
  there exists a compact set \( V \subseteq G \) such that \( \mathscr{W}_{c} \subseteq V \cdot c \)
  for all \( c \in \mathcal{C} \).
\end{definition}

Note that given any cocompact cross section \( \mathcal{C} \subseteq X \), the Voronoi tessellation
constructed above is an example of a bounded tiling of \( G \acts X \) with associated cross section
\( \mathcal{C} \).

Our goal in the next few subsections is to establish a correspondence between finite invariant
measures on the phase space \( X \) and on a cocompact cross section \( \mathcal{C} \).

\subsection{Lifting measures to the phase space}
\label{sec:lift-meas-phase}

Let \( \nu \) be a finite invariant measure on a cocompact cross section~\( \mathcal{C} \).  We are
going to lift \( \nu \) to a finite invariant measure \( \mu_{\nu} \) on \( X \).  For this we fix a
bounded tiling \( \mathscr{W} \) of \( G \acts X \) which associated cross section is
\( \mathcal{C} \); e.g., the Voronoi tessellation constructed above.  We
mention in advance that the measure \( \mu_{\nu} \) will turn out to be independent of the choice of
\( \mathscr{W} \).  Given a Borel set \( A \subseteq X \) and \( c \in \mathcal{C} \), let
\( \xi(A, c ) \) be defined by:
\[ \xi(A, c) = \lambda\bigl( \{ g \in G : gc \in \mathscr{W}_{c} \cap A \} \bigr). \]
For a fixed \( c \in \mathcal{C} \), \( \xi(\,\cdot\,, c) \) is a finite (albeit not invariant)
measure on \( X \).  Measures \( \xi(\,\cdot\,, c) \) are bounded uniformly in \( c \) since
\( \mathscr{W} \) is assumed to be bounded.  The measure \( \mu_{\nu} \) is then defined to be the
integration of \( \xi(\,\cdot\, , c) \) with respect to \( \nu \): for a Borel \( A \subseteq X \)
\[ \mu_{\nu}(A) = \int_{\mathcal{C}} \xi(A,c)\, d\nu(c). \]

We would like to show that \( \mu_{\nu} \) is \( G \)-invariant.  Let \( T : X \to X \) be a Borel
automorphism of the orbit equivalence relation \( E_{X}^{G} \), and suppose that \( T \)
preserves the Haar measure on each orbit: for any orbit \( \mathcal{O} \subseteq X \), any Borel
set \( P \subseteq \mathcal{O} \), and any (equivalently, some) \( x \in \mathcal{O} \) one has \(
\lambda_{x}(P) = \lambda_{x}(TP) \), or in a more detailed form,
\[ \lambda \bigl( \{g \in G : gx \in P \} \bigr)= \lambda \bigl( \{g \in G : gx \in TP\} \bigr). \]

\begin{proposition}
  \label{prop:mu-nu-is-T-invariant}
  In the notations above, \( \mu_{\nu}(TA) = \mu_{\nu}(A) \) for all Borel \( A \subseteq X \).
\end{proposition}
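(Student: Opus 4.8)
The plan is to unwind the definition of $\mu_\nu$ as a double integral and exchange the roles of the tiling decomposition and the action of $T$. Recall that
\[
\mu_\nu(A) = \int_{\mathcal{C}} \xi(A,c)\, d\nu(c) = \int_{\mathcal{C}} \lambda\bigl(\{g \in G : gc \in \mathscr{W}_c \cap A\}\bigr)\, d\nu(c).
\]
The difficulty is that $T$ need not respect the tiling $\mathscr{W}$: a point $x \in \mathscr{W}_c$ may well satisfy $Tx \in \mathscr{W}_{c'}$ for some $c' \ne c$ in the same orbit. So one cannot argue tile by tile directly. The idea is to use a \emph{second} tiling adapted to $T$, or more precisely to express $\mu_\nu$ in a form that manifestly only sees the Haar measure on each orbit, at which point $T$-invariance becomes transparent.

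First I would observe that, since $\mathscr{W}$ is a bounded tiling, for each orbit $\mathcal{O}$ and each $x \in \mathcal{O}$ the tiles $\{\mathscr{W}_c : c \in \mathcal{O} \cap \mathcal{C}\}$ partition $\mathcal{O}$, and $\lambda_x(\mathscr{W}_c) = \xi(\mathcal{O}, c)$ (computing $\xi$ against the full orbit). The key computation is to rewrite, for a fixed orbit $\mathcal{O}$ and the restriction $\nu|_{\mathcal{O}\cap\mathcal{C}}$ (thought of as weights on the discrete set $\mathcal{O}\cap\mathcal{C}$),
\[
\sum_{c \in \mathcal{O}\cap\mathcal{C}} \nu(\{c\})\, \lambda_x(\mathscr{W}_c \cap A)
\]
in a way that decouples from $\mathscr{W}$. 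Here I would use the hypothesis that $\nu$ is $E$-invariant on $\mathcal{C}$: concretely, $\nu$ being invariant means that for the ``nearest-point'' map $w : X \to \mathcal{C}$ sending $x$ to the center of its tile, the pushforward of $\mu_\nu$ under $w$, suitably weighted by tile volumes, reproduces $\nu$; more usefully, $\nu$-invariance lets me move the $T$-action through the sum. The clean way: define a Borel cocycle-style identity showing that for \emph{any} Borel $B \subseteq X$,
\[
\int_{\mathcal{C}} \lambda_c(\mathscr{W}_c \cap B)\, d\nu(c) = \int_{\mathcal{C}} \lambda_c\bigl(\mathscr{W}_c \cap T^{-1}B\bigr)\, d\nu(c),
\]
which is exactly the claim with $B = TA$. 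To prove this I would partition $B$ according to which tile each point lands in \emph{before} applying $T^{-1}$ and \emph{after}: write $B = \bigsqcup_{c} (B \cap \mathscr{W}_c)$ and $T^{-1}B = \bigsqcup_{c}(T^{-1}B \cap \mathscr{W}_c)$, and set up a bijective "mass-transport" between the pieces. Since $T$ preserves $\lambda_x$ on each orbit and $\mathscr{W}$ has uniformly bounded tiles, the function $(c, c') \mapsto \lambda_c\bigl((B \cap \mathscr{W}_c) \cap T(\mathscr{W}_{c'})\bigr)$ is a Borel, orbit-wise finitely supported kernel, and the two integrals above are the two marginal sums of this kernel integrated against $\nu$. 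Because $\nu$ is $E$-invariant, the mass-transport principle on the countable Borel equivalence relation $E|_{\mathcal{C}}$ (each vertex has finite degree here, by lacunarity and boundedness) forces the two marginals to integrate to the same value — this is the standard fact that for an invariant measure on a countable Borel equivalence relation, any Borel function $f(c,c')$ supported on the relation has $\int \sum_{c'} f(c,c')\,d\nu(c) = \int \sum_{c'} f(c',c)\,d\nu(c)$.

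The main obstacle, and the step deserving the most care, is making the mass-transport argument rigorous: one must check that the kernel $f(c,c') = \lambda_c\bigl((TA \cap \mathscr{W}_c) \cap T(\mathscr{W}_{c'})\bigr)$ is genuinely Borel in $(c,c') \in E|_{\mathcal{C}}$ and finitely supported in $c'$ for each $c$ (the latter follows because $\mathscr{W}_c$ is $\lambda$-bounded and $T$ has ``bounded displacement'' only in the measure-theoretic sense — actually one needs that $T^{-1}(\mathscr{W}_c)$ meets only finitely many tiles, which requires an extra argument, perhaps restricting first to a sub-$\sigma$-algebra or invoking that we may assume $T$ moves points a bounded distance, or truncating $T$). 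Then I must verify the two marginals of $f$ are exactly $\xi(TA \cap \mathscr{W}_\cdot, \cdot)$ and $c' \mapsto \lambda_{c'}(A \cap \mathscr{W}_{c'})$ respectively, using $T$-invariance of $\lambda_x$ to get $\lambda_c(T(\mathscr{W}_{c'}) \cap TA) = \lambda_{c'}(\mathscr{W}_{c'} \cap A)$. Once the kernel is set up correctly, invariance of $\nu$ closes the argument; if the finiteness of the support is problematic, I would instead approximate $A$ by sets on which the displacement of $T$ is bounded, prove the identity there, and pass to the limit by monotone/dominated convergence, using that $\mu_\nu$ is a finite measure.
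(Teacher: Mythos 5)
Your proposal is correct, and the mass-transport formulation is a clean way to package the argument. The paper's own proof is the same mechanism presented bare-handed: it picks a countable group \( H = (h_n)_n \) generating \( E_{\mathcal{C}} \) via Feldman--Moore, partitions \( A = \bigsqcup_n A_n \) according to the ``jump'' condition \( h_n w(x) = w(Tx) \) (where \( w : X \to \mathcal{C} \) is the tile map), observes that \( T(A_n \cap \mathscr{W}_c) = TA_n \cap \mathscr{W}_{h_n c} \) so that \( \xi(A_n, c) = \xi(TA_n, h_n c) \), and then computes \( \mu_\nu(TA_n) = \mu_\nu(A_n) \) by the substitution \( \tilde c = h_n^{-1}c \) together with the \( H \)-invariance of \( \nu \). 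That is exactly the disjointification-by-partial-injections proof of the instance of mass transport you invoke, with the Feldman--Moore index \( n \) playing the role of your source-tile index \( c' \). Your version is conceptually tidier if mass transport for countable Borel equivalence relations is taken as known; the paper's is more self-contained.

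On the technical worries you raise: the concern about \( T^{-1}(\mathscr{W}_c) \) meeting only finitely many tiles, and the contemplated fallback of truncating \( T \) or approximating \( A \), are both unnecessary. The mass-transport identity
\[
\int_{\mathcal{C}} \sum_{c'} f(c,c') \, d\nu(c) = \int_{\mathcal{C}} \sum_{c'} f(c',c) \, d\nu(c),
\]
with the inner sums over \( c' \) in the \( E_{\mathcal{C}} \)-class of \( c \), holds for \emph{every} nonnegative Borel \( f \) on the relation \( E_{\mathcal{C}} \), with no finiteness or support hypothesis at all: one disjointifies \( E_{\mathcal{C}} \) into graphs of Borel partial injections via Feldman--Moore (or Lusin--Novikov), applies invariance of \( \nu \) to each piece, and sums by monotone convergence. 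In your situation the two marginal sums happen to be finite --- they equal \( \xi(TA, c) \) and \( \xi(A, c') \) respectively, each bounded by the uniform tile volume --- but you do not need to know this in advance. Borelness of the kernel \( f(c,c') = \lambda_c\bigl( \mathscr{W}_c \cap TA \cap T(\mathscr{W}_{c'}) \bigr) \) is routine given that \( \mathscr{W} \), \( T \), \( A \), and \( c \mapsto \lambda_c \) are Borel. With these points settled, your proof goes through exactly as sketched.
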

\begin{proof}
  Let \( H \) be a countable group, with an enumeration of its element \( (h_{n})_{n=1}^{\infty} \),
  such that the relation 
  \[ E_{\mathcal{C}} = E_{X}^{G} \cap (\mathcal{C} \times \mathcal{C})\]
  is given by an action \( H \acts \mathcal{C} \) (existence of such an action is guaranteed by a
  theorem of J.~Feldman and C.~C.~Moore \cite{feldman_ergodic_1977}).  Fix a Borel set
  \( A \subseteq X \) and let
  \[ A_{n}' = \{ x \in A : h_{n}w(x) = w(Tx) \}, \]
  where \( w : X \to \mathcal{C} \) is a function which graph is \( \mathscr{W} \).
  The sets \( A_{n}' \) may not be disjoint, so we also define
  \( A_{1} = A_{1}' \) and \( A_{n} = A_{n}' \setminus \bigcup_{k < n}  A_{k} \).  This gives us a
  partition \( A = \bigsqcup_{n} A_{n} \).  Since
  \[ \mu_{\nu}(A) = \sum_{n} \mu_{\nu}(A_{n}) \quad \textrm{and} \quad \mu_{\nu}(TA) = \sum_{n}
  \mu_{\nu}(TA_{n}), \]
  to prove the proposition it is enough to show that \(
  \mu_{\nu}(TA_{n}) = \mu_{\nu}(A_{n}) \) for each \( n \).

  Sets \( A_{n} \) are constructed in such a way that for any \( c \in \mathcal{C} \) one has
  \[ T( A_{n} \cap \mathscr{W}_{c}) = TA_{n} \cap \mathscr{W}_{h_{n}c}, \]
  and therefore the assumption that \( T \) preserves the Haar measure within each orbit implies
  \begin{equation}
    \label{eq:1}
    \xi(A_{n},c) = \xi(TA_{n}, h_{n}c) \quad \textrm{ for all \( c \in \mathcal{C} \).}
  \end{equation}
  We therefore conclude
  \[ \mu_{\nu}(TA_{n}) = \int_{\mathcal{C}} \xi(TA_{n},c) \, d\nu(c) = [\tilde{c} := h_{n}^{-1}c] =
  \int_{\mathcal{C}} \xi(TA_{n}, h_{n}\tilde{c})\, d\nu(h_{n}\tilde{c}) = \int_{\mathcal{C}}
  \xi(A_{n}, \tilde{c})\, d\nu(\tilde{c}) = \mu_{\nu}(A_{n}), \]
  where the penultimate equality follows from \eqref{eq:1} and from the \( H \)-invariance of
  \( \nu \).
\end{proof}

\begin{corollary}
  \label{cor:mu-nu-is-G-invariant}
  The measure \( \mu_{\nu} \) is \( G \)-invariant.
\end{corollary}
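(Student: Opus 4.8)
The plan is to apply Proposition~\ref{prop:mu-nu-is-T-invariant} to the translation maps. Fix \( g \in G \) and let \( S_{g} : X \to X \) be the Borel bijection \( S_{g}(x) = gx \). First I would observe that \( S_{g} \) maps every orbit of \( G \acts X \) onto itself, so it is a Borel automorphism of the orbit equivalence relation \( E_{X}^{G} \) \textemdash{} precisely the kind of map to which the proposition applies, provided it preserves the Haar measure within each orbit.

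The one substantive point is to verify that last property, and this is where unimodularity of \( G \) enters. Fix an orbit \( \orbit(x) \) and use the identification \( G \ni h \mapsto hx \in \orbit(x) \) that was used to define \( \lambda_{x} \) as the push forward of the right Haar measure \( \lambda \). Under this identification the restriction \( S_{g}\restriction_{\orbit(x)} \) corresponds to the map \( h \mapsto gh \), i.e.\ to left translation by \( g \) on \( G \). Since \( G \) is unimodular, the right Haar measure \( \lambda \) is also left invariant, so left translation preserves \( \lambda \); pushing this forward, \( (S_{g})_{*}\lambda_{x} = \lambda_{x} \) on \( \orbit(x) \). As the orbit was arbitrary, \( S_{g} \) preserves the Haar measure on each orbit.

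Having checked the hypotheses, Proposition~\ref{prop:mu-nu-is-T-invariant} applied with \( T = S_{g} \) yields \( \mu_{\nu}(gA) = \mu_{\nu}(S_{g}A) = \mu_{\nu}(A) \) for every Borel \( A \subseteq X \). Since \( g \in G \) was arbitrary, \( \mu_{\nu} \) is \( G \)-invariant. I do not anticipate any real obstacle here; the only thing that has to be said is the translation-to-left-multiplication dictionary together with the appeal to unimodularity, everything else being immediate from the preceding proposition.

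\begin{proof}
  Fix \( g \in G \) and let \( S_{g} : X \to X \) be given by \( S_{g}(x) = gx \).  This is a Borel
  automorphism of \( X \) which maps each orbit of the action onto itself, hence a Borel
  automorphism of the orbit equivalence relation \( E_{X}^{G} \).  Under the identification
  \( G \ni h \mapsto hx \in \orbit(x) \) used to define \( \lambda_{x} \), the restriction of
  \( S_{g} \) to \( \orbit(x) \) corresponds to the left translation \( h \mapsto gh \) on \( G \).
  Since \( G \) is unimodular, the right Haar measure \( \lambda \) is left invariant, so this
  translation preserves \( \lambda \), and therefore \( S_{g} \) preserves \( \lambda_{x} \) on each
  orbit.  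By Proposition~\ref{prop:mu-nu-is-T-invariant} applied with \( T = S_{g} \),
  \[ \mu_{\nu}(gA) = \mu_{\nu}(S_{g}A) = \mu_{\nu}(A) \]
  for all Borel \( A \subseteq X \).  As \( g \in G \) was arbitrary, \( \mu_{\nu} \) is
  \( G \)-invariant.
\end{proof}
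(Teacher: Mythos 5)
Your proof is correct and follows the same route as the paper's (which is the one-liner ``set \( T_g(A) = gA \) and apply Proposition~\ref{prop:mu-nu-is-T-invariant}''); you have simply made explicit the verification that \( T_g \) preserves \( \lambda_x \) on each orbit via unimodularity, which the paper itself flags in the remark immediately following the corollary.
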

\begin{proof}
  For \( g \in G \) set \( T_{g}(A) = gA \) and apply Proposition \ref{prop:mu-nu-is-T-invariant}.
\end{proof}

Note that the construction of \( \mu_{\nu} \) and Proposition \ref{prop:mu-nu-is-T-invariant} are
valid when \( G \) is not necessarily unimodular and \( \lambda \) is merely a right invariant Haar
measure, but Corollary \ref{cor:mu-nu-is-G-invariant} uses left invariance of \( \lambda \).

For future reference we note that if a neighborhood of the identity \( U \subseteq G \) is such that
\( U \times \{c\} \subseteq \mathscr{W} \) for all \( c \in \mathcal{C} \), then the map
\( U \times \mathcal{C} \ni (g, c) \mapsto gc \in U \cdot \mathcal{C} \) is a bijection, and via
this identification
\begin{equation}
  \label{eq:2}
  \mu_{\nu} |_{U \cdot \mathcal{C}} = \lambda|_{U} \times \nu.
\end{equation}

\subsection{Pulling measures to a cross section}
\label{sec:pull-meas-cross}
One can pull measures in the opposite direction as well.  Pick \( U \subseteq G \) to be such a
small neighborhood of the identity that \( \mathcal{C} \) is \( U \)-lacunary.  If \( \mu \) is an
invariant measure on the phase space \( X \), we set for \( \mathcal{A} \subseteq \mathcal{C} \) the measure
\( \nu_{\mu}(\mathcal{A}) \) to be defined by
\[ \nu_{\mu}(\mathcal{A}) = \frac{\mu(U \cdot \mathcal{A})}{\lambda(U)}. \]

The definition turns out to be independent of the choice of \( U \).  The measure
\( \nu_{\mu} \) is a finite invariant measure on \( \mathcal{C} \), and moreover,
\begin{equation}
  \label{eq:3}
  \mu|_{U \cdot \mathcal{C}} = \lambda|_{U} \times \nu_{\mu}.
\end{equation}
For a slick proof of these assertions see \cite[Proposition 4.3]{kyed_l^2-betti_2013}.  Whereas
cocompactness of \( \mathcal{C} \) was used in Subsection \ref{sec:lift-meas-phase} to ensure
that \( \mu_{\nu} \) is finite, results of this subsection are valid even when \( \mathcal{C} \) is
not necessarily cocompact.

\subsection{Correspondence between ergodic measures}
\label{sec:corr-betw-ergod}

The maps \( \mu \mapsto \nu_{\mu} \) and \( \nu \mapsto \mu_{\nu} \) are inverses of each other.
Indeed, \eqref{eq:2} and \eqref{eq:3} imply that \( \nu_{\mu_{\nu}} \) is such that via the natural
identifications
\[ \lambda|_{U} \times \nu_{\mu_{\nu}} = \mu_{\nu}|_{U \cdot \mathcal{C}} =
\lambda|_{U} \times \nu, \]
and therefore \( \nu_{\mu_{\nu}} = \nu  \) for any finite invariant measure \( \nu \) on \(
\mathcal{C} \).  On the other hand, for any invariant measure \( \mu \) on \( X \)
\[ \mu_{\nu_{\mu}}|_{U \cdot \mathcal{C}} = \lambda|_{U} \times \nu_{\mu} =
\mu|_{U \cdot \mathcal{C}}, \]
implying that \( \mu_{\nu_{\mu}}|_{U \cdot \mathcal{C}} = \mu|_{U \cdot
  \mathcal{C}} \), but both measures are \( G \)-invariant, whence \( \mu_{\nu_{\mu}} = \mu \).  In
particular, the construction of \( \mu_{\nu} \) does not depend on the choice of a bounded tiling \(
\mathscr{W} \).  

While the map \( \mu \mapsto \nu_{\mu} \) is a linear bijection between spaces of invariant
measures, it does not preserve the normalization: in general
\( \mu(X) \ne \nu_{\mu}(\mathcal{C}) \).  Let \( \mathcal{E}(X) \) denote the family of all pie
measures on \( X \) (recall that pie stands for probability invariant ergodic); invariance and
ergodicity is understood to be with respect to the orbit equivalence relations \( E_{X}^{G} \).
Similarly let \( \mathcal{E}(\mathcal{C}) \) denote the family of pie measures on \( \mathcal{C} \)
with respect to the relation \( E_{\mathcal{C}} \).
\begin{proposition}
  \label{prop:bijection-pie-measures}
  The map \( \mathcal{E}(X) \ni \mu \mapsto \nu_{\mu}/\nu_{\mu}(\mathcal{C}) \in
  \mathcal{E}(\mathcal{C}) \) is a bijection between \( \mathcal{E}(X) \) and \(
  \mathcal{E}(\mathcal{C}) \).  
\end{proposition}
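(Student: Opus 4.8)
The plan is to leverage the already-established facts that $\mu\mapsto\nu_\mu$ and $\nu\mapsto\mu_\nu$ are mutually inverse linear bijections between the cones of finite invariant measures on $X$ and on $\mathcal{C}$, and to show that this correspondence (a) preserves ergodicity in both directions, and (b) sends nonzero measures to nonzero measures, so that the normalization $\mu\mapsto\nu_\mu/\nu_\mu(\mathcal{C})$ is a well-defined bijection on the distinguished simplices of \emph{probability} invariant ergodic measures. Since linearity and the inverse relationship are already in hand, the whole content reduces to the statement: \emph{a finite invariant measure $\mu$ on $X$ is ergodic with respect to $E_X^G$ if and only if $\nu_\mu$ is ergodic with respect to $E_{\mathcal{C}}$.}

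First I would record that $\nu_\mu$ is never the zero measure when $\mu\neq 0$: from \eqref{eq:3} we have $\mu(U\cdot\mathcal{C})=\lambda(U)\,\nu_\mu(\mathcal{C})$, and $U\cdot\mathcal{C}$ has full $\mu$-measure because $\mathcal{C}$ meets every orbit and $\mu$ is invariant (any orbit is covered by countably many translates of $U\cdot\mathcal{C}$, each of the same measure, so if $\mu(U\cdot\mathcal{C})=0$ then $\mu=0$). Hence $\nu_\mu(\mathcal{C})>0$ and the normalization makes sense; the same computation run backwards handles $\nu\mapsto\mu_\nu$.

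Next, the ergodicity equivalence. Suppose $\mu$ is ergodic and let $\mathcal{B}\subseteq\mathcal{C}$ be $E_{\mathcal{C}}$-invariant. Then its $G$-saturation $G\cdot\mathcal{B}$ is an $E_X^G$-invariant Borel subset of $X$, and I claim $\nu_\mu(\mathcal{B})=\mu(G\cdot\mathcal{B})/\text{(normalizing constant)}$ up to null sets — more precisely $U\cdot\mathcal{B}=(G\cdot\mathcal{B})\cap(U\cdot\mathcal{C})$ modulo the lacunarity bookkeeping, so $\nu_\mu(\mathcal{B})=\mu\bigl((G\cdot\mathcal{B})\cap U\cdot\mathcal{C}\bigr)/\lambda(U)$. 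By ergodicity of $\mu$, $\mu(G\cdot\mathcal{B})\in\{0,\mu(X)\}$, and combined with the fact (just proved) that $U\cdot\mathcal{C}$ is $\mu$-conull, this forces $\nu_\mu(\mathcal{B})\in\{0,\nu_\mu(\mathcal{C})\}$, i.e.\ $\nu_\mu$ is ergodic. Conversely, if $\nu_\mu$ is ergodic and $\mathcal{A}\subseteq X$ is $E_X^G$-invariant Borel, set $\mathcal{B}=\mathcal{A}\cap\mathcal{C}$; this is $E_{\mathcal{C}}$-invariant, $\mathcal{A}=G\cdot\mathcal{B}$ since $\mathcal{C}$ is a cross section and $\mathcal{A}$ is saturated, and the identity \eqref{eq:3} again gives $\mu(\mathcal{A}\cap U\cdot\mathcal{C})=\lambda(U)\,\nu_\mu(\mathcal{B})\in\{0,\lambda(U)\nu_\mu(\mathcal{C})\}$; since $\mathcal{A}\cap U\cdot\mathcal{C}$ is $\mu$-conull inside $\mathcal{A}$ (same covering argument applied within the orbit-saturated set $\mathcal{A}$), we conclude $\mu(\mathcal{A})\in\{0,\mu(X)\}$. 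This shows $\mu\mapsto\nu_\mu$ carries $\mathcal{E}(X)$ into the nonzero ergodic invariant measures on $\mathcal{C}$ and, after dividing by $\nu_\mu(\mathcal{C})$, into $\mathcal{E}(\mathcal{C})$; the inverse map $\nu\mapsto\mu_\nu/\mu_\nu(X)$ does the same in reverse, and since $\mu\mapsto\nu_\mu$ and $\nu\mapsto\mu_\nu$ are inverse up to the scalars $\nu_\mu(\mathcal{C})$ and $\mu_\nu(X)$, the two normalized maps are genuine inverse bijections between $\mathcal{E}(X)$ and $\mathcal{E}(\mathcal{C})$.

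The step I expect to be the main obstacle is verifying cleanly that $U\cdot\mathcal{C}$ is $\mu$-conull and, more generally, that for an $E_X^G$-invariant Borel set $\mathcal{A}$ one has $\mu(\mathcal{A}\setminus U\cdot\mathcal{C})=0$ — this is where one must use invariance of $\mu$ together with a measurable selection / countable-cover argument (covering each orbit by countably many $G$-translates of $U\cdot\mathcal{C}$ of equal measure and invoking that a standard Borel space has no $\mu$-null conull decomposition issues), taking care that the "modulo null" manipulations relating $U\cdot\mathcal{B}$ to $(G\cdot\mathcal{B})\cap(U\cdot\mathcal{C})$ are legitimate given that boundary/overlap sets for the lacunary packing are Borel. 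Everything else is bookkeeping on top of \eqref{eq:2}, \eqref{eq:3}, and the already-proved inversion identities.
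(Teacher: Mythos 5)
Your overall approach matches the paper's: establish the ergodicity equivalence via \eqref{eq:3}, then piggyback on the already-proved inversion identities to get the bijection. The one point that needs repair is the claim that \( U \cdot \mathcal{C} \) is \( \mu \)-conull. That is false in general. Take for instance \( G = \mathbb{R} \) acting by translation on \( X = \mathbb{R}/\mathbb{Z} \) with Lebesgue measure, \( \mathcal{C} = \{0\} \), and \( U = (-\tfrac14, \tfrac14) \); then \( \mathcal{C} \) is \( U \)-lacunary and cocompact, yet \( \mu(U \cdot \mathcal{C}) = \tfrac12 \). The covering argument you give in parentheses only yields positivity, \( \mu(U \cdot \mathcal{C}) > 0 \), not conullity, and positivity is all you may legitimately use.

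The fix is easy and is what the paper does: rather than invoking conullity of \( U \cdot \mathcal{C} \), argue about the invariant complement directly. Given ergodic \( \mu \) and an \( E_{\mathcal{C}} \)-invariant \( \mathcal{B} \subseteq \mathcal{C} \), both \( G \cdot \mathcal{B} \) and \( G \cdot (\mathcal{C} \setminus \mathcal{B}) \) are \( G \)-invariant, so one of them is \( \mu \)-null; applying \eqref{eq:3} to the null one gives \( \nu_\mu(\mathcal{B}) = 0 \) or \( \nu_\mu(\mathcal{C} \setminus \mathcal{B}) = 0 \). In the converse direction, for \( G \)-invariant \( Z \subseteq X \) and \( \mathcal{B} = Z \cap \mathcal{C} \), ergodicity of \( \nu_\mu \) makes one of \( \nu_\mu(\mathcal{B}) \), \( \nu_\mu(\mathcal{C} \setminus \mathcal{B}) \) vanish; say the former, so \( \mu(U \cdot \mathcal{B}) = 0 \) by \eqref{eq:3}, and since \( Z \) is invariant it is covered by countably many \( G \)-translates of \( U \cdot \mathcal{B} = Z \cap U \cdot \mathcal{C} \), giving \( \mu(Z) = 0 \). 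The countable-covering step you flagged as the potential obstacle is thus exactly what is used, but applied to the invariant sets \( Z \) and \( X \setminus Z \) (or \( G \cdot \mathcal{B} \) and its complement), not to deduce a conullity statement about \( U \cdot \mathcal{C} \) itself. With that adjustment your argument is correct and essentially identical to the one in the paper; the only cosmetic difference is that the paper additionally spells out injectivity and surjectivity of the normalized map, while you appeal directly to the inversion identities, which is equally valid.
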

\begin{proof}
  We start by checking that for any finite invariant measure \( \mu \) on \( X \) the measure
  \( \nu_{\mu} \) is ergodic if and only if \( \mu \) is.  Indeed, suppose \( \nu_{\mu} \) is
  ergodic and let \( Z \subseteq X \) be a \( G \)-invariant set.  Since \( Z \cap \mathcal{C} \) is
  \( E_{\mathcal{C}} \)-invariant, either \( \nu_{\mu}(Z \cap \mathcal{C}) = 0 \) or
  \( \nu_{\mu}(\mathcal{C} \setminus Z) = 0 \).  Suppose for definiteness that the former is
  realized.  By \eqref{eq:3}, \( \mu\bigl(U \cdot (Z \cap \mathcal{C})\bigr) = 0 \), and therefore
  \( \mu(Z) = 0 \).  Thus \( \mu \) must be ergodic.

  If \( \mu \) is ergodic and \( Z \subseteq \mathcal{C} \) is \( E_{\mathcal{C}} \)-invariant, then
  either \( \mu(G \cdot Z) = 0 \) or \( \mu\bigl(G \cdot (\mathcal{C} \setminus Z)\bigr) = 0 \).
  Equation \eqref{eq:3} implies that either \( \nu_{\mu}(Z) = 0 \) or
  \( \nu_{\mu}(\mathcal{C} \setminus Z) = 0\) must take place.

  This proves that for any \( \mu \in \mathcal{E}(X) \) the measure
  \( \nu_{\mu}/ \nu_{\mu}(\mathcal{C}) \) is indeed an element of \( \mathcal{E}(\mathcal{C}) \).
  This map is injective for if \( \mu_{1} \ne \mu_{2} \) are pie measure on \( X \), there exists an
  invariant \( Z \subseteq X \) such that \( \mu_{1}(Z) = 1 \) and \( \mu_{2}(Z) = 0 \).  This implies
  that \( \nu_{\mu_{1}}(Z \cap \mathcal{C})  > 0 \) and \( \nu_{\mu_{2}}(Z \cap \mathcal{C}) = 0 \),
  therefore \( \nu_{\mu_{1}}/ \nu_{\mu_{1}}(\mathcal{C}) \) and \( \nu_{\mu_{2}}/
  \nu_{\mu_{2}}(\mathcal{C}) \) are distinct.

  Finally the map is surjective.  Indeed, for a given \( \nu \in \mathcal{E}(\mathcal{C}) \) there
  exists a finite invariant measure \( \mu \) on \( X \) such that \( \nu_{\mu} = \nu \).  By the
  above, the measure \( \mu \) must be ergodic, hence \( \mu/ \mu(X) \in \mathcal{E}(X) \), but also
  \[ \nu_{a\mu} = a\nu_{\mu} \quad \textrm{ for all } a \in \mathbb{R}^{>0}, \]
  and therefore 
  \[ \frac{\nu_{\mu/\mu(X)}}{\nu_{\mu/\mu(X)}(\mathcal{C})} =
  \frac{\nu_{\mu}}{\nu_{\mu}(\mathcal{C})} = \nu. \qedhere \]
\end{proof}

We conclude this section with an observation that wHOE induces a bijection between pie measures on
the phase spaces.

\begin{theorem}
  \label{thm:wHOE-bijection-pie}
  Let \( G_{1} \acts X_{1} \) and \( G_{2} \acts X_{2} \) be free Borel actions of unimodular
  locally compact Polish groups \( G_{1} \) and \( G_{2} \), and let \( \phi : X_{1} \to X_{2} \) be
  a wHOE between the actions.  The push-forward map
  \( \phi_{*} : \mathcal{E}(X_{1}) \to \mathcal{E}(X_{2}) \) induces a bijection between pie
  measures on \( X_{1} \) and \( X_{2} \).
\end{theorem}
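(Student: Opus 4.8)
The plan is to reduce the statement to the already-established bijection between pie measures on a phase space and pie measures on a cocompact cross section (Proposition~\ref{prop:bijection-pie-measures}), by showing that a wHOE $\phi : X_1 \to X_2$ transports cross sections to cross sections in a way that is compatible with the measure correspondences $\mu \mapsto \nu_\mu$. First I would fix a compact symmetric neighborhood of the identity $U_1 \subseteq G_1$ and invoke Theorem~\ref{prop:existence-of-cocompact} to obtain a cocompact (maximal $U_1$-lacunary) Borel cross section $\mathcal{C}_1 \subseteq X_1$. The key point is that $\phi$, restricted to any single orbit, is a Borel isomorphism onto an orbit in $X_2$ that carries the Haar measure on the $G_1$-orbit to a positive multiple of the Haar measure on the $G_2$-orbit; in particular it is a homeomorphism of orbits (a scaling of the identification with the group, at least in the $\mathbb{R}^d$ case; in general one should argue directly that lacunarity of a set is a ``measure-theoretic coarseness'' condition preserved under a map scaling Haar measure on orbits). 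From this I would deduce that $\mathcal{C}_2 := \phi(\mathcal{C}_1)$ is a Borel cross section for $G_2 \acts X_2$ which is lacunary and cocompact — lacunarity because $\phi$ cannot compress a $U_1$-separated set into an arbitrarily small neighborhood once it scales orbit-Haar-measure by the (orbit-constant) factor $\alpha_\phi$, and cocompactness because $\phi$ of a $U_1^2$-cocompact set remains cocompact for the same reason with the inequalities reversed.

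Next I would examine how $\phi$ intertwines the two equivalence relations on cross sections. Since $\phi$ maps orbits to orbits bijectively, its restriction $\bar\phi := \phi|_{\mathcal{C}_1} : \mathcal{C}_1 \to \mathcal{C}_2$ is a Borel bijection with $(\bar\phi \times \bar\phi)(E_{\mathcal{C}_1}) = E_{\mathcal{C}_2}$, i.e.\ it is an isomorphism of the countable Borel equivalence relations $E_{\mathcal{C}_1}$ and $E_{\mathcal{C}_2}$. Hence $\bar\phi_*$ is a linear bijection from finite invariant measures on $\mathcal{C}_1$ to finite invariant measures on $\mathcal{C}_2$ carrying $\mathcal{E}(\mathcal{C}_1)$ onto $\mathcal{E}(\mathcal{C}_2)$. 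The main work is then to verify the compatibility square: for $\mu \in \mathcal{E}(X_1)$, the cross-section measure $\nu_{\phi_*\mu}$ on $\mathcal{C}_2$ equals (a constant multiple of) $\bar\phi_*\nu_\mu$. This follows by unwinding the definition $\nu_{\mu}(\mathcal{A}) = \mu(U \cdot \mathcal{A})/\lambda(U)$ together with Equation~\eqref{eq:3}: in a small tube $U_1 \cdot \mathcal{C}_1$ the measure $\mu$ is locally a product $\lambda_1|_{U_1} \times \nu_\mu$, and the image under $\phi$ of such a tube is, up to the orbit-constant normalization $\alpha_\phi$, a tube $U_2 \cdot \mathcal{C}_2$ on which $\phi_*\mu$ is the product $\lambda_2|_{U_2} \times \bar\phi_*\nu_\mu$; matching the two product decompositions via~\eqref{eq:3} applied in $X_2$ identifies $\nu_{\phi_*\mu}$ with $\bar\phi_*\nu_\mu$ up to a fixed scalar. (That the normalization $\alpha_\phi$ is constant on each orbit, hence pulls out of the integral defining $\mu$ restricted to a tube, is exactly the property recorded right after Definition~\ref{def:haar-orbit-equivalence}.)

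Finally I would assemble the diagram. We have bijections $\mathcal{E}(X_1) \to \mathcal{E}(\mathcal{C}_1)$ and $\mathcal{E}(\mathcal{C}_2) \to \mathcal{E}(X_2)$ from Proposition~\ref{prop:bijection-pie-measures} (the latter applied to $G_2 \acts X_2$ with cross section $\mathcal{C}_2$), a bijection $\mathcal{E}(\mathcal{C}_1) \to \mathcal{E}(\mathcal{C}_2)$ induced by $\bar\phi_*$ and renormalization, and the compatibility just established shows that the composite of these three agrees, up to the harmless renormalization $\mu \mapsto \mu/\mu(X)$, with $\phi_* : \mathcal{E}(X_1) \to \mathcal{E}(X_2)$. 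Therefore $\phi_*$ restricted to pie measures is the composition of bijections, hence a bijection; one still has to note that $\phi_*\mu$ is a probability measure when $\mu$ is (true since $\phi$ is a genuine bijection of the underlying sets, so $\phi_*\mu(X_2) = \mu(X_1) = 1$) and that it is ergodic and invariant — invariance because $\phi$ conjugates $G_1$-invariant sets to $G_2$-invariant sets, ergodicity because it is a Borel isomorphism of the invariant $\sigma$-algebras. The step I expect to be the genuine obstacle is the first one: carefully arguing that a wHOE carries a lacunary cocompact Borel cross section to a lacunary cocompact Borel cross section. A clean way to do this is to observe that a wHOE restricts on each orbit to a homeomorphism onto its image (since it scales a two-sided translation-invariant Haar measure), so ``bounded'' and ``uniformly separated'' subsets of an orbit are preserved; the uniformity across orbits then needs the normalization function to be bounded on the relevant invariant set, which one secures by decomposing $X_1$ into countably many invariant Borel pieces on which $\alpha_\phi$ is bounded and treating each separately — exactly the kind of exhaustion argument used in Lemma~\ref{lem:enlarging-over-K} and Theorem~\ref{prop:existence-of-cocompact}.
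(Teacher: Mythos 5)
Your proposal has a genuine gap at exactly the place you flag as ``the genuine obstacle'': the claim that the image \( \phi(\mathcal{C}_1) \) of a lacunary cocompact cross section is again a lacunary cocompact cross section does not follow from the wHOE hypothesis. You justify it by saying that a wHOE ``restricts on each orbit to a homeomorphism onto its image (since it scales a two-sided translation-invariant Haar measure).'' This is false: preserving Haar measure on orbits (even exactly, with \( \alpha_\phi \equiv 1 \)) is far weaker than being a homeomorphism on orbits. In the case \( G_1 = G_2 = \mathbb{R}^d \), the orbitwise condition on \( \phi \) merely says that the transported map \( \mathbb{R}^d \to \mathbb{R}^d \) preserves Lebesgue measure up to a constant; almost no Lebesgue-measure-preserving Borel bijection of \( \mathbb{R}^d \) is continuous, and it is easy to cook up one that sends a lacunary set to a set with accumulation points (extend any bijection between two countable subsets to a measure-preserving bijection of the ambient space). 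Lacunarity and cocompactness are metric-geometric conditions, not measure-theoretic ones, so they simply are not invariant under maps that only respect the measure class or even the measure. Consequently step 2 of your plan breaks, and the reduction to Proposition~\ref{prop:bijection-pie-measures} via \( \mathcal{C}_2 = \phi(\mathcal{C}_1) \) does not go through. The proposed repair (exhaust \( X_1 \) by invariant pieces where \( \alpha_\phi \) is bounded) does not help, because the failure already occurs when \( \alpha_\phi \) is constantly \( 1 \).

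The paper sidesteps this entirely and never pushes a cross section forward through \( \phi \). Instead it fixes a \( G_1 \)-invariant \( \mu \), takes any \( h \in G_2 \) and any Borel \( Z \subseteq X_2 \), and observes that the map \( T = \phi^{-1} \circ h \circ \phi : X_1 \to X_1 \) is a Borel automorphism of \( E_{X_1}^{G_1} \) which preserves the Haar measure on each orbit of \( G_1 \) (this uses precisely the wHOE property that \( \phi_*\lambda_{1,x} = \alpha_\phi(x)\lambda_{2,\phi(x)} \) with \( \alpha_\phi \) constant on orbits, together with unimodularity of \( G_2 \)). Proposition~\ref{prop:mu-nu-is-T-invariant} then gives \( \mu(T A) = \mu(A) \) for all Borel \( A \), and applying this with \( A = \phi^{-1}(Z) \) yields \( \mu(\phi^{-1}(Z)) = \mu(\phi^{-1}(hZ)) \), i.e.\ \( G_2 \)-invariance of \( \phi_*\mu \). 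Everything else (that \( \phi_* \) is a bijection, preserves ergodicity, and preserves total mass) is immediate because \( \phi \) is a Borel bijection carrying orbits onto orbits. That is the route you should take: work with the conjugated automorphism on the source side, where Proposition~\ref{prop:mu-nu-is-T-invariant} already did the cross-section bookkeeping for you, rather than trying to transport a cross section forward.
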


\begin{proof}
  The only thing that is not immediate from the definitions is that \( \phi_{*}\mu \) is \( G_{2}
  \)-invariant whenever \( \mu \) is a \( G_{1} \)-invariant measure.  Let \( Z \subseteq X_{2} \)
  and let \( h \in G_{2} \).  We need to show that \( \phi_{*}\mu(Z) = \phi_{*}(hZ) \), or in other
  words,
  \[ \mu\bigl( \phi^{-1}(Z) \bigr) = \mu\bigl( \phi^{-1}(hZ) \bigr). \]
  Let \( T : X_{1} \to X_{1} \) be defined by \( Tx = \phi^{-1} \circ h \circ \phi(x) \).  The map
  \( T \) is a Borel bijection preserving \( E_{X_{1}}^{G_{1}} \) and moreover, we claim that \( T \)
  preserves the Haar measure on each orbit of \( G_{1} \), satisfying therefore the assumptions of
  Proposition \ref{prop:mu-nu-is-T-invariant}.  Once this claim is proved, we get
  \[ \mu\bigl( \phi^{-1}(Z) \bigr) = \mu\bigl( T \phi^{-1}(Z) \bigr) = \mu\bigl( \phi^{-1}(hZ)
  \bigr).\]

  To see that \( T \) preserves the Haar measure, let us pick an orbit
  \( \mathcal{O} \subseteq X_{1} \), a point \( x_{1} \in \mathcal{O} \), and let \( \lambda_{1} \),
  \( \lambda_{2} \) be Haar measures on \( G_{1} \) and \( G_{2} \) respectively.  By the definition
  of wHOE, \( \phi_{*}\lambda_{1,x} = \alpha_{\phi}(x) \lambda_{2,\phi(x)} \) for some
  \( \alpha_{\phi}(x) \in \mathbb{R}^{>0} \) which is moreover constant on \( \mathcal{O} \).  For a
  subset \( P \subseteq \mathcal{O} \) using that \( \lambda_{2,\phi(x)} \) is \( G_{2} \)-invariant
  we have
  \[ \lambda_{1,x}( T P ) = \lambda_{1,x}\bigl( \phi^{-1}\circ h \circ \phi(P) \bigr) = \alpha_{\phi}(x)
  \lambda_{2,\phi(x)}\bigl(h \circ \phi(P)\bigr) = \alpha_{\phi}(x) \lambda_{2,\phi(x)}\bigl( \phi(P) \bigr) =
  \lambda_{1,x} (P). \]
  Thus \( \lambda_{1,x}(TP) = \lambda_{1,x}(P) \) for all \( P \subseteq \mathcal{O} \) and the
  theorem follows.
\end{proof}

\section{Rectangular tilings of multidimensional flows}
\label{sec:rect-tilings-Rn}

From this section onward we restrict ourselves to the case of a free Borel action
\( \mathbb{R}^{d} \acts X \) of the Euclidean space on a standard Borel space \( X \).  Recall that
we use an additive notation for the action: for \( x \in X \) and \( r \in \mathbb{R}^{d} \) the
action of \( r \) upon \( x \) is denoted by \( x + r \).

Our main tool in understanding multidimensional flows is the concept of a rectangular tiling.
Simply put, it is a Borel partition of orbits into rectangles\footnote{Perhaps it would be
  more accurate to speak of \emph{cuboids} rather than rectangles, but since our figures illustrate
  the case \( d = 2 \), and since the argument is the same in all dimensions, we choose to use the
  two-dimensional terminology throughout the paper.}\kern-2.6mm.

\begin{definition}
  \label{def:tiling}
  A \emph{rectangular tiling} of an action \( \mathbb{R}^{d} \acts X \) is a tiling \( \mathscr{R}
  \) (in the sense of Definition \ref{def:general-tiling}) such that for the associated cross section \(
  \mathcal{C} \) and any \( c \in \mathcal{C} \) the domain \( \mathscr{R}_{c} \) is a (half-open)
  rectangle: \( \mathscr{R}_{c} = c + R_{c} \), where \( R_{c} \) is of the form 
  \[ R_{c} = \prod_{i=1}^{d}[a_{i}, b_{i}).\]
  Equivalently, a rectangular tiling is a cross section \( \mathcal{C} \subseteq X \) together with
  bounded away from zero Borel functions (we call them \emph{dimension functions})
  \( \zeta_{i}^{l}, \zeta_{i}^{r} : \mathcal{C} \to \mathbb{R}^{>0} \), \( i \le d \), such that
  rectangles \( R_{c} = \prod_{i=1}^{d}\bigl[-\zeta_{i}^{l}(c), \zeta_{i}^{r}(c)\bigr) \) tile all
  the orbits: for any orbit \( \mathcal{O} \subseteq X \)
  \[ \mathcal{O} = \bigsqcup_{c \in \mathcal{C} \cap \mathcal{O}} (c + R_{c}).\]
\end{definition}

\begin{figure}[ht]
  \centering
  \includegraphics[width=65.889mm]{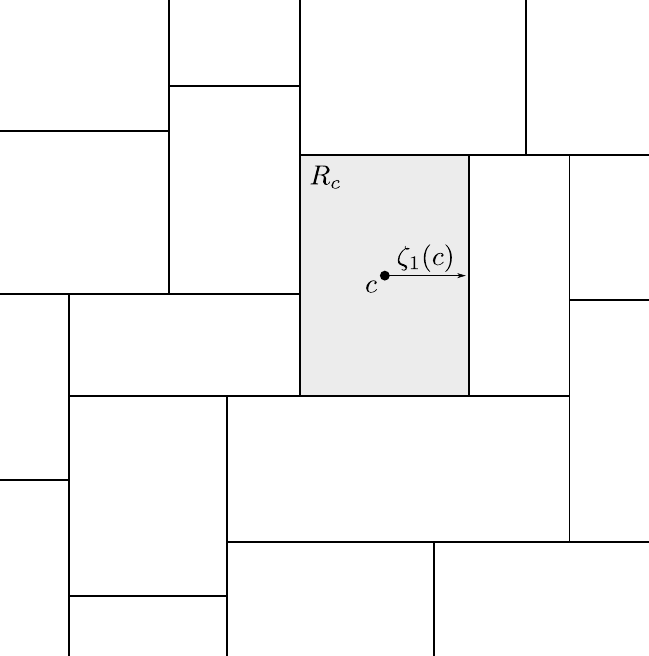}    
  \caption{Rectangular tiling}
  \label{fig:tiling}
\end{figure}

Figure \ref{fig:tiling} shows how portion of an orbit may look like.  The gray rectangle corresponds
to a single tile of the form \( c + R_{c} \).  All our rectangles are half-open to ensure that
\( c + R_{c} \) and \( c' + R_{c'} \) are disjoint whenever \( c \ne c' \).  Note also that given a
tiling \( \mathscr{R} \), we may select in a Borel way centers of tiles \( c + \vec{w}_{c} \), where
\( \vec{w}_{c}(i) = \bigl(\zeta^{r}_{i}(c) - \zeta^{l}_{i}(c)\bigr)/2 \).  Unless stated otherwise,
we shall therefore assume that our tilings are symmetric and points \( c \in \mathcal{C} \) are
centers of the tiles \( R_{c} = \prod_{i=1}^{d} \bigl[-\zeta_{i}(c), \zeta_{i}(c) \bigr) \).

Existence of rectangular tilings for actions of \( \mathbb{Z}^{d} \) has been established by S.~Gao
and S.~Jackson \cite{gao_countable_2015}.  They proved that any free action
\( \mathbb{Z}^{d} \acts X \) admits a rectangular tiling, and moreover, for any
\( L \in \mathbb{R} \) one may always find a tiling \( \mathscr{R} \) such that the dimension
functions are bounded below by \( L \).  A similar result is true for \( \mathbb{R}^{d} \)-flows.

\begin{theorem}[Gao--Jackson for \( \mathbb{Z}^{d} \) actions]
  \label{thm:existence-of-rectangular-tilings}
  For any \( L > 0 \) there exists a rectangular tiling of \( \mathbb{R}^{d} \acts X \) with all
  edges of rectangles at least \( 2L \): \( \zeta_{i}(c) \ge L \) for all \( i \le d \) and all
  \( c \) in the associated cross section \( \mathcal{C} \).
\end{theorem}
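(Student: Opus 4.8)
The plan is to reduce to the Gao--Jackson rectangular tiling theorem for \( \mathbb{Z}^{d} \)-actions recalled just above, by first building a cross section that is, inside every orbit, a genuine axis-parallel grid, and then coarsening that grid.

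The first --- and main --- step is to construct a Borel cross section \( \mathcal{C} \subseteq X \) which, inside each orbit \( \mathcal{O} \) (identified with \( \mathbb{R}^{d} \), with no preferred origin), has the product form
\[
\mathcal{C} \cap \mathcal{O} = \bigl\{\, x_{\mathcal{O}} + \textstyle\sum_{i=1}^{d} \psi^{\mathcal{O}}_{i}(n_{i})\, e_{i} \;:\; n \in \mathbb{Z}^{d} \,\bigr\},
\]
where each \( \psi^{\mathcal{O}}_{i} : \mathbb{Z} \to \mathbb{R} \) is strictly increasing, unbounded in both directions, with consecutive gaps in a fixed interval \( [\epsilon, C] \). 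Equivalently, one wants Borel sets \( H_{1}, \dots, H_{d} \subseteq X \) such that \( H_{i} \) is invariant under the complementary subgroup \( \{ r \in \mathbb{R}^{d} : r_{i} = 0 \} \) and meets every line in direction \( e_{i} \) in a bi-infinite set whose consecutive gaps lie in \( [\epsilon, C] \); then \( \mathcal{C} = H_{1} \cap \cdots \cap H_{d} \). Such a \( \mathcal{C} \) is automatically a lacunary cocompact cross section, and the maps \( \sigma^{\pm e_{i}} \) sending a grid point to its neighbour in direction \( \pm e_{i} \) (Borel, since the \( H_{i} \) are) generate a free Borel \( \mathbb{Z}^{d} \)-action \( \sigma \) whose orbit equivalence relation is exactly \( E_{\mathcal{C}} \).

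Granting this, the remainder is bookkeeping. Apply the Gao--Jackson theorem to \( \sigma : \mathbb{Z}^{d} \acts \mathcal{C} \) with lower bound \( M := \lceil 2L/\epsilon \rceil \): each orbit \( \mathcal{C} \cap \mathcal{O} \) (a copy of \( \mathbb{Z}^{d} \)) is partitioned into combinatorial boxes \( \prod_{i}[a_{i}, a_{i}+m_{i}) \) with every \( m_{i} \ge M \). Inflate each such box to the honest rectangle \( \prod_{i} \bigl[\, \psi^{\mathcal{O}}_{i}(a_{i}),\ \psi^{\mathcal{O}}_{i}(a_{i}+m_{i}) \,\bigr) \subseteq \mathcal{O} \). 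Since the half-open intervals \( [\psi^{\mathcal{O}}_{i}(k), \psi^{\mathcal{O}}_{i}(k+1)) \), \( k \in \mathbb{Z} \), tile \( \mathbb{R} \), and the combinatorial boxes tile \( \mathbb{Z}^{d} \), the inflated rectangles tile \( \mathcal{O} \); the \( i \)-th edge of each has length \( \psi^{\mathcal{O}}_{i}(a_{i}+m_{i}) - \psi^{\mathcal{O}}_{i}(a_{i}) \ge m_{i}\epsilon \ge 2L \). After recentering the tiles to their midpoints (as in the discussion following Definition~\ref{def:tiling}) this is a rectangular tiling of \( \mathbb{R}^{d} \acts X \) with \( \zeta_{i} \ge L \) everywhere, and Borelness is inherited from \( \mathcal{C} \), \( \sigma \), the \( \psi^{\mathcal{O}}_{i} \), and the Gao--Jackson tiling.

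The main obstacle is the first step: the Borel construction of the grid, i.e., of the hyperplane families \( H_{i} \). The subtlety is that \( H_{i} \) must be saturated for the complementary \( \mathbb{R}^{d-1} \)-action, whose orbit space need not be standard Borel, so one cannot simply choose the ``\( e_{i} \)-levels'' orbit by orbit. I would handle this by induction on \( d \): for \( d = 1 \) a single lacunary cocompact cross section of the flow suffices (Kechris, together with Theorem~\ref{prop:existence-of-cocompact}), and for the inductive step one peels off one coordinate direction at a time, using a cross section of the one-dimensional \( e_{i} \)-flow and a marker argument along the transverse hyperplanes to synchronize the chosen levels coherently over each orbit --- exploiting that cross sections of genuine \( \mathbb{R} \)-flows carry a canonical linear order. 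Once the \( H_{i} \) are available, the inflation argument above completes the proof.
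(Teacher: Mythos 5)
Your proposal takes a genuinely different route from the paper, and that route contains a gap at its central step.

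The paper's intended proof is to rerun the Gao--Jackson marker construction of \cite{gao_countable_2015} directly in \( \mathbb{R}^{d} \), noting that their argument depends only on the large-scale geometry of the acting group, which is the same for \( \mathbb{Z}^{d} \) and \( \mathbb{R}^{d} \). You instead try to \emph{reduce} the \( \mathbb{R}^{d} \)-case to the \( \mathbb{Z}^{d} \)-theorem used as a black box, by first building a Borel ``grid'' cross section \( \mathcal{C} = H_{1} \cap \cdots \cap H_{d} \) carrying a free Borel \( \mathbb{Z}^{d} \)-action compatible with the geometry, and then inflating the Gao--Jackson boxes along the grid. If the grid existed, the inflation step would indeed be correct and routine.

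The gap is the construction of the hyperplane families \( H_{i} \), and it is not a technicality. Requiring \( H_{i} \) to be \( e_{i}^{\perp} \)-invariant while meeting each \( e_{i} \)-line in a discrete set with gaps in \( [\epsilon, C] \) is exactly the statement that the quotient flow \( \mathbb{R} \acts X / e_{i}^{\perp} \) admits a lacunary cocompact cross section. But when the original flow is non-smooth, the complementary \( \mathbb{R}^{d-1} \)-subflow is generally non-smooth as well, so \( X / e_{i}^{\perp} \) is not a standard Borel space and neither Kechris's cross-section theorem, Theorem \ref{prop:existence-of-cocompact}, nor the usual countable marker machinery applies to it. Taking a cross section \( \mathcal{S}_{i} \) of the one-dimensional \( e_{i} \)-flow on \( X \) does not resolve this: its \( e_{i}^{\perp} \)-saturation need not be discrete in the \( e_{i} \)-direction, because the selected \( e_{i} \)-levels drift as one moves within an \( e_{i}^{\perp} \)-orbit, and ``synchronizing'' them coherently over a whole orbit would require a Borel selection across a non-standard orbit space. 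The canonical linear order on a cross section of an \( \mathbb{R} \)-flow does not supply an origin and does not break this obstruction. You flag this as the main obstacle, which is accurate, but the proposed fix (induction plus a marker argument along transverse hyperplanes) never confronts the non-standardness of the quotient and I do not see how to carry it out. More to the point, a Borel product cross section of this type \textemdash{} in which adjacent tiles would automatically share full faces along every coordinate hyperplane \textemdash{} is a strictly stronger structure than a rectangular tiling, in which tiles may be staggered; indeed the whole point of the Rudolph-style tiling built later in the paper (Theorem \ref{thm:Rudolphs-tiling}) is that one works with such staggered rectangular tilings precisely because the lattice-like structure is not available in the Borel category. So the reduction you propose inverts the direction of difficulty: Step~1 is at least as hard as the theorem being proved.
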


What matters for the argument in \cite[Section 3]{gao_countable_2015} is the large scale
geometry of \( \mathbb{Z}^{d} \), which is the same as that of \( \mathbb{R}^{d} \), thus only
superficial modifications for their proof are required, which we therefore omit.

A standard large marker -- small maker argument allows us to improve the above statement by imposing
further restrictions on the dimension functions.

\begin{theorem}
  \label{thm:existence-of-rectangular-tilings-restr}
  Let \( \mathbb{R}^{d} \acts X \) be a free multidimensional flow.  For any \( L' > 0 \) and any
  \( \epsilon > 0 \) there exists a rectangular tiling \( \mathscr{Q} \) of the flow such that
  \[ |\zeta_{i}(c) - L'| < \epsilon\]
  for all \( c \) in the associated cross section \( \mathcal{C} \).
\end{theorem}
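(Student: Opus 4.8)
The plan is to start from a rectangular tiling $\mathscr{R}$ furnished by Theorem~\ref{thm:existence-of-rectangular-tilings} with a very large lower bound $L$ on the half-edges, and then to \emph{subdivide} each tile into many congruent (or nearly congruent) smaller rectangles whose half-edges are all within $\epsilon$ of $L'$. Concretely, fix $L'$ and $\epsilon$. Choose $L$ so large that for every real number $t \ge 2L$ and every index $i$ there is an integer $n_i(t) \ge 1$ with $\bigl| t / n_i(t) - L' \bigr| < \epsilon$; this is possible because the gaps between consecutive values $t/1, t/2, t/3, \dots$ shrink below $2\epsilon$ once $t$ is large compared to $L'$ and $\epsilon$ — indeed it suffices that the smallest admissible quotient $t/n$ that is $\le L'+\epsilon$ automatically exceeds $L'-\epsilon$, which holds as soon as $L' \gg \epsilon$ forces $n$ to be large and hence the step $t/n - t/(n+1) = t/\bigl(n(n+1)\bigr)$ to be small. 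Apply Theorem~\ref{thm:existence-of-rectangular-tilings} with this $L$ to get $\mathscr{R}$ with associated cross section $\mathcal{C}$ and (symmetric) dimension functions $\zeta_i : \mathcal{C} \to \mathbb{R}^{>0}$, $\zeta_i(c) \ge L$.

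Next I would carry out the subdivision in a Borel way. For each $c \in \mathcal{C}$ and each $i \le d$, set $n_i(c) = n_i\bigl(2\zeta_i(c)\bigr)$ as above; since $\zeta_i$ is Borel and $t \mapsto n_i(t)$ is Borel (it is piecewise constant), the functions $c \mapsto n_i(c)$ are Borel. Partition the tile $c + R_c = c + \prod_{i=1}^d [-\zeta_i(c), \zeta_i(c))$ into the grid of $\prod_i n_i(c)$ subrectangles, each of the form $\prod_{i=1}^d [\,-\zeta_i(c) + k_i \cdot 2\zeta_i(c)/n_i(c),\ -\zeta_i(c) + (k_i+1)\cdot 2\zeta_i(c)/n_i(c)\,)$ with $0 \le k_i < n_i(c)$. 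The new cross section $\mathcal{Q}$ consists of the centers of all these subrectangles, translated appropriately along the orbit: for $c \in \mathcal{C}$ and a multi-index $\vec{k}$, the new center is $c + \vec{v}_{c,\vec{k}}$ where $\vec{v}_{c,\vec{k}}(i) = -\zeta_i(c) + (2k_i+1)\zeta_i(c)/n_i(c)$. One checks directly that $\mathcal{Q}$ is Borel, that it is $U'$-lacunary for a small neighborhood $U'$ of the identity (the new half-edges are bounded below by $L' - \epsilon > 0$ once $\epsilon$ is small), and that the associated domains — the subrectangles, recentered — tile each orbit, because the original tiles do and each is exactly partitioned. The new dimension functions are $\zeta_i'(\,c + \vec{v}_{c,\vec{k}}\,) = \zeta_i(c)/n_i(c)$, which by the choice of $n_i$ satisfies $|\zeta_i' - L'| < \epsilon$. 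This gives the desired tiling $\mathscr{Q}$.

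The only genuinely delicate points are bookkeeping rather than conceptual. First, one must confirm that the large-marker/small-marker phrasing in the statement is really just this refinement: the ``large markers'' are the centers of $\mathscr{R}$ and the ``small markers'' are the centers of the subdivision, and the argument is uniform across all orbits and all invariant measures simultaneously because everything is done pointwise and Borel. Second, one must verify the arithmetic claim that a single threshold $L$ works for all $i$ and all tile sizes $\ge 2L$ at once — this reduces to the elementary estimate that for $t \ge 2L$ the consecutive ratios $t/n$ have gaps at most $t/\bigl(n(n+1)\bigr) \le 2(L'+\epsilon)^2 / t \cdot \text{(const)}$, which is $< 2\epsilon$ once $L$ is large, so some $t/n$ lands in $(L'-\epsilon, L'+\epsilon)$. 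I expect the main obstacle, such as it is, to be stating the Borelness of $c \mapsto n_i(c)$ and of the resulting $\mathcal{Q}$ cleanly, together with checking condition~(iv) of Definition~\ref{def:general-tiling} for the subdivided tiling; neither is hard, but both require care that the recentering is performed measurably and that the small neighborhood $U'$ witnessing lacunarity and $U' \times \{q\} \subseteq \mathscr{Q}$ can be chosen uniformly.
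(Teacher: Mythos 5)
Your proposal is correct and follows the same large-marker/small-marker strategy as the paper: invoke Theorem~\ref{thm:existence-of-rectangular-tilings} to get a coarse tiling with long half-edges, then subdivide each tile into rectangles whose edges are all within $\epsilon$ of $L'$. The paper allows an unequal partition $r = \sum_i s_i$ with each $|s_i - L'| < \epsilon$, while you insist on an equal grid (so all subpieces in a given direction have the same length $2\zeta_i(c)/n_i(c)$); this is a harmless specialization and, if anything, makes the Borelness and lacunarity bookkeeping slightly more transparent.
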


\begin{proof}
  Pick \( L \) so large that any real \( r \ge L \) can be partitioned into reals
  \( \epsilon \)-close to \( L \): for any \( r \ge L \) there exist \( s_{1}, \ldots, s_{n} > 0 \)
  such that \( r = \sum_{i=1}^{n}s_{i} \) and \( |s_{i} - L'| < \epsilon \) for all \( i \le n \).
  Use Theorem \ref{thm:existence-of-rectangular-tilings} and construct a tiling \( \mathscr{R} \)
  with all edges of tiles being at least \( L \).  By the choice of \( L \), each tile
  \( \mathscr{R}_{c} \) can be partitioned into rectangles \( Q_{c,1}, \ldots, Q_{c,n} \) having
  all edges \( \epsilon \)-close to \( L' \).  These rectangles \( Q_{c,i} \) constitute tiles of
  the desired tiling \( \mathscr{Q} \).
\end{proof}

\section{Uniform Rokhlin's Lemma}
\label{sec:unif-rokhl-lemma}

The following theorem is the usual Rokhlin's Lemma \cite[Theorem 1]{lind_locally_1975} when the
measure \( \mu \) is fixed.  The adjective ``uniform'' refers to the fact that
\( \mu(\mathcal{C} + R) > 1 - \epsilon \) holds for all invariant probability measures.

\begin{theorem}[Uniform Rokhlin's Lemma for \( \mathbb{R}^{d} \) actions]
  \label{thm:uniform-rokhlin-lemma}
  Let \( \mathbb{R}^{d} \acts X \) be a free Borel action.  For any rectangle
  \( R = \prod_{i=1}^{d}[-a_{i}, a_{i}) \), \( a_{i} > 0 \), and any \( \epsilon > 0 \) there exists
  a Borel \( R \)-lacunary cocompact cross section \( \mathcal{C} \subseteq X \) such that
  \( \mu(\mathcal{C} + R) > 1 - \epsilon \) for any invariant probability measure \( \mu \) on
  \( X \).
\end{theorem}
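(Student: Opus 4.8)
The plan is to start from the rectangular tilings provided by Theorem~\ref{thm:existence-of-rectangular-tilings-restr} and then to coarsen them into a genuinely lacunary cocompact cross section whose tiles are exact translates of the fixed rectangle $R$, losing only an $\epsilon$ fraction of mass uniformly over all invariant probability measures. Write $R = \prod_{i=1}^d [-a_i, a_i)$ and set $\ell = \min_i a_i$. First I would choose a large real $L'$ — much larger than $\mathrm{diam}(R)$ — together with a small tolerance $\delta$, and apply Theorem~\ref{thm:existence-of-rectangular-tilings-restr} to obtain a rectangular tiling $\mathscr{Q}$ with associated cross section $\mathcal{C}_0$ and dimension functions $\zeta_i$ satisfying $|\zeta_i(c) - L'| < \delta$ for all $c$. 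Inside each tile $\mathscr{Q}_c = c + \prod_i [-\zeta_i(c), \zeta_i(c))$, which is a box of sidelengths between $2L' - 2\delta$ and $2L' + 2\delta$ in each coordinate, I would pack a maximal number of pairwise disjoint translates of $R$ arranged in a regular grid aligned with the coordinate axes, anchored at a corner of the tile. A box of side $s$ in coordinate $i$ accommodates $\lfloor s / (2a_i) \rfloor$ copies of the interval $[-a_i,a_i)$, leaving an uncovered sliver of width less than $2a_i$ at the far end; taking the product over $i$, the union of these $R$-copies covers a fraction at least $\prod_i \bigl(1 - 2a_i / (2L' - 2\delta)\bigr)$ of $\mathscr{Q}_c$, and by choosing $L'$ large enough (and $\delta < 1$, say) this fraction exceeds $1 - \epsilon$ simultaneously for every tile.

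The cross section $\mathcal{C}$ is then defined to be the Borel set of all centers of the $R$-copies produced in this way, across all tiles of $\mathscr{Q}$; this is Borel because the packing inside each tile is a deterministic function of the Borel data $(c, \zeta_1(c), \ldots, \zeta_d(c))$. By construction $\mathcal{C} + R$ is a disjoint union inside each tile and hence disjoint across the whole orbit, so $\mathcal{C}$ is $R$-lacunary, in fact the sets $c' + R$, $c' \in \mathcal{C}$, are pairwise disjoint. Cocompactness is immediate: every point of $X$ lies in some $\mathscr{Q}_c$, which has diameter at most $d \cdot (2L' + 2\delta)$, so $\mathcal{C}$ is $V$-cocompact for $V$ the closed ball of that radius; alternatively one may enlarge $\mathcal{C}$ to a maximal $R$-lacunary cross section using Theorem~\ref{prop:existence-of-cocompact}, which only increases $\mathcal{C} + R$ and preserves the mass bound. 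It remains to verify the uniform measure estimate.

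For the mass bound, let $\mu$ be any invariant probability measure on $X$. Using the correspondence from Section~\ref{sec:invar-meas-cross-phase} — specifically equation~\eqref{eq:3} applied to the cross section $\mathcal{C}_0$ with a small neighborhood of the identity — one has $\mu|_{U \cdot \mathcal{C}_0} = \lambda|_U \times \nu_\mu$ for the pulled-back measure $\nu_\mu$ on $\mathcal{C}_0$; more to the point, the bounded tiling $\mathscr{Q}$ gives $\mu(X) = \int_{\mathcal{C}_0} \lambda(\mathscr{Q}_c)\, d\nu_\mu(c)$ and $\mu(\mathcal{C} + R) = \int_{\mathcal{C}_0} \lambda\bigl((\mathcal{C}+R) \cap \mathscr{Q}_c\bigr)\, d\nu_\mu(c)$, by the lifting construction of $\mu_\nu$ in Subsection~\ref{sec:lift-meas-phase}. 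Since the integrand on the left dominates $(1-\epsilon)$ times the integrand on the right pointwise — that is exactly the per-tile packing inequality — and since $\mu$ is a probability measure, we conclude $\mu(\mathcal{C} + R) \ge (1-\epsilon)\mu(X) = 1 - \epsilon$, and this holds uniformly in $\mu$ because the packing fraction bound $\prod_i(1 - a_i/(L'-\delta)) > 1 - \epsilon$ was arranged independently of the orbit and of $\mu$. The main obstacle is the bookkeeping in this last step: one must be slightly careful that the per-tile comparison of Haar measures is legitimate — i.e., that $\mathscr{Q}_c$ really is a translate $c + \prod_i[-\zeta_i(c),\zeta_i(c))$ of Euclidean volume $\prod_i 2\zeta_i(c)$ and that the packed region has volume at least $(1-\epsilon)$ of that — and that the identification of $\mu$-mass with the $\nu_\mu$-integral of these volumes is exactly the content of the lifting construction, so that no measure-theoretic subtlety about boundaries or non-uniqueness of the tiling intrudes.
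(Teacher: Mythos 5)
Your proposal is correct and follows essentially the same route as the paper: both apply Theorem~\ref{thm:existence-of-rectangular-tilings-restr} to obtain a tiling by boxes of side roughly $2L'$, pack each box with axis-aligned copies of $R$ from a corner so that the uncovered sliver has relative Lebesgue volume under $\epsilon$, take $\mathcal{C}$ to be the centers of those copies, and then push the pointwise packing inequality through the $\mu = \int_{\mathcal{C}_0} \xi(\,\cdot\,,c)\,d\nu_\mu(c)$ decomposition from Section~\ref{sec:invar-meas-cross-phase} to get the uniform bound. (One small slip: you wrote that the ``integrand on the left dominates $(1-\epsilon)$ times the integrand on the right,'' with $\mu(X)$ on the left and $\mu(\mathcal{C}+R)$ on the right; the inequality goes the other way, $\lambda\bigl((\mathcal{C}+R)\cap\mathscr{Q}_c\bigr) \ge (1-\epsilon)\lambda(\mathscr{Q}_c)$, which is what your conclusion actually uses.)
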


\begin{proof}
  We begin with an application of Theorem \ref{thm:existence-of-rectangular-tilings-restr} and
  select a tiling \( \mathscr{Q} \) with associated cross section \( \mathcal{C}_{0} \subseteq X \)
  and domains \( \mathscr{Q}_{c} = c + Q_{c} \), such that each rectangle \( Q_{c} \) is ``large''
  compared to \( R \).

  \begin{figure}[ht]
    \centering
    \includegraphics[width=44.803mm]{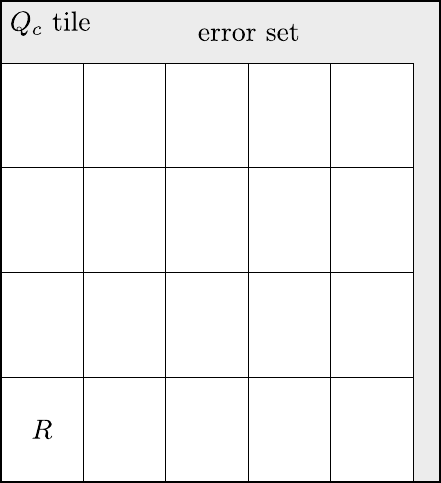}
    \caption{Large marker -- small marker}
    \label{fig:large-marker-small-marker}
  \end{figure}
  Now try to tile each \( c + Q_{c} \) with copies of \( R \) starting from the ``bottom left''
  corner as showed on Figure \ref{fig:large-marker-small-marker}.  Since lengths of edges of
  \( Q_{c} \) are not necessarily multiples of the corresponding edges of \( R \), there will be
  some remainder, aka error set, marked gray in Figure \ref{fig:large-marker-small-marker}.  The
  condition on the size of \( Q_{c} \) is that the proportion of the measure of the error set to the
  measure of \( \mathscr{Q}_{c} \) is at most \( \epsilon \): in the notation of Section
  \ref{sec:lift-meas-phase}
  \[ \xi(\textrm{error set},c)/\xi(\mathscr{Q}_{c},c) < \epsilon. \]
  Let \( \mathcal{C} \) denote the cross section which consists of centers of all the rectangles
  \( R \) inscribed into tiles \( \mathscr{Q}_{c} \).  We claim that \( \mathcal{C} \) satisfies the
  desired properties.

  The only thing that requires checking is that \( \mu(\mathcal{C} + R) > 1 - \epsilon \) for all
  probability \( E_{X} \)-invariant measures \( \mu \) on \( X \).  By Section
  \ref{sec:invar-meas-cross-phase} one can find a finite invariant measure \( \nu \) on
  \( \mathcal{C}_{0} \) such that
  \[ \mu = \int_{\mathcal{C}} \xi(\, \cdot\, , c)\, d\nu(c), \quad \textrm{where \( \xi(\,
    \cdot\, , c) \) is the ``Lebesgue'' measure on \( \mathscr{Q}_{c} \)}. \]
  By the construction of \( \mathcal{C} \), one has
  \( \xi(\mathcal{C} + R, c) > (1-\epsilon)\xi(\mathscr{Q}_{c}, c) = (1-\epsilon)\xi(X, c) \) and
  therefore
  \[ \mu(\mathcal{C} + R) = \int_{\mathcal{C}_{0}} \xi(\mathcal{C} + R,c)\, d\nu(c) >
  \int_{\mathcal{C}_{0}} (1-\epsilon) \xi(X,c)\, d\nu(c) = (1-\epsilon) \mu(X) =
  1-\epsilon. \qedhere \]
\end{proof}

\addpicturenocap{r}{Shrinked}{40.2mm}{}
\stepcounter{figure}
For the proof of the main technical result, namely Theorem
\ref{thm:rokhlin-towers} below, we shall need an easy lemma.  Suppose we have a square
\( R = \prod_{i=1}^{d}[-l,l) \) and an \( R \)-lacunary cross section \( \mathcal{C} \).  If we
shrink \( R \) to a square \( R^{\la b} = \prod_{i=1}^{d}[-l+b, l-b) \), where \( b \) is small
compared to \( l \), then \( \mu (\mathcal{C} + R^{\la b}) \) has to be close to
\( \mu(\mathcal{C} + R) \) for all invariant measures \( \mu \) on \( X \).

In general, for any rectangle \( R = \prod_{i=1}^{d}[l_{i},r_{i}) \) and
\( b \in \mathbb{R}^{\ge 0} \) we let \( R^{\la b} \) denote the rectangle
\( \prod_{i=1}^{d}[l_{i}+b, r_{i}-b) \).  This notation will only be used when \( l_{i}+ b \) is
still less than \( r_{i} - b \).  In that case \( R^{\la b} \) is obtained by shrinking each edge of
\( R \) by \( b \) (see Figure to the right).  Note that if \( \widetilde{R} \) is any rectangle
contained in \( \prod_{i=1}^{d}[-L, L] \), then
\begin{equation}
  \label{eq:7}
 \hspace*{2.5cm} R^{\la b}  + \widetilde{R} \subseteq R^{\la b - L}.
\end{equation}

\vspace*{0.1mm}

\begin{lemma}
  \label{lem:small-boundary}
  Let \( \mathbb{R}^{d} \acts X \) be a free Borel flow.  For any \( \epsilon > 0 \), any real
  \( b \in \mathbb{R}^{\ge 0} \), there exists \( L \ge b \) such that for any \( l \ge L \), the square
  \( R = [-l,l)^{d} \), any \( R \)-lacunary cocompact cross section \( \mathcal{C} \subseteq X \),
  and any probability invariant measure \( \mu \) on \( X \) one has
  \[ \mu(\mathcal{C} + R^{\la b}) > \mu(\mathcal{C} + R) - \epsilon. \]
\end{lemma}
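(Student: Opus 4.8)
The plan is to replace the asserted inequality by an exact product formula, using the correspondence between invariant measures on~\( X \) and on a cross section from Section~\ref{sec:invar-meas-cross-phase}. Fix \( \epsilon>0 \) and \( b\ge 0 \). For any \( l \), any square \( R=[-l,l)^{d} \), any \( R \)-lacunary cocompact cross section \( \mathcal{C}\subseteq X \), and any probability invariant measure \( \mu \) on~\( X \), lacunarity makes the translates \( c+R \), \( c\in\mathcal{C} \), pairwise disjoint. Hence for every Borel \( S\subseteq R \) the sets \( c+S \), \( c\in\mathcal{C} \), are disjoint as well, so \( S\mapsto\mu(\mathcal{C}+S) \) is a finite measure on the Borel subsets of~\( R \), and so is \( S\mapsto\lambda(S)\,\nu_{\mu}(\mathcal{C}) \), where \( \lambda \) is Lebesgue measure on \( \mathbb{R}^{d} \) and \( \nu_{\mu} \) is the invariant measure on \( \mathcal{C} \) from Subsection~\ref{sec:pull-meas-cross}.

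First I would establish the identity
\[
  \mu(\mathcal{C}+S)=\lambda(S)\,\nu_{\mu}(\mathcal{C})\qquad\text{for every Borel }S\subseteq R.
\]
It suffices to check it on half-open cubes \( T\subseteq R \), since these form a \( \pi \)-system generating the Borel \( \sigma \)-algebra of~\( R \) and both sides are finite measures. For such a \( T \) with center \( w \), translating by \( -w \) preserves \( \mu \) (as \( \mu \) is \( \mathbb{R}^{d} \)-invariant), so \( \mu(\mathcal{C}+T)=\mu(\mathcal{C}+(T-w)) \); the cube \( T-w \) is centered at the origin, hence a neighborhood of it, and \( T-w\subseteq R \), so \( \mathcal{C} \) is \( (T-w) \)-lacunary and \eqref{eq:3} gives \( \mu(\mathcal{C}+(T-w))=\lambda(T-w)\,\nu_{\mu}(\mathcal{C})=\lambda(T)\,\nu_{\mu}(\mathcal{C}) \). (Alternatively one may invoke \eqref{eq:3} with \( U=R \) directly, since \( R \) is itself a neighborhood of the origin for which \( \mathcal{C} \) is lacunary.)

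Applying the identity with \( S=R \) and with \( S=R^{\la b}=[-l+b,l-b)^{d} \) yields \( \mu(\mathcal{C}+R)=(2l)^{d}\nu_{\mu}(\mathcal{C}) \) and \( \mu(\mathcal{C}+R^{\la b})=(2l-2b)^{d}\nu_{\mu}(\mathcal{C}) \). Since \( \mu \) is a probability measure, \( \mu(\mathcal{C}+R)\le 1 \), hence \( \nu_{\mu}(\mathcal{C})\le(2l)^{-d} \), and therefore
\begin{align*}
  \mu(\mathcal{C}+R)-\mu(\mathcal{C}+R^{\la b})
  &=\bigl[(2l)^{d}-(2l-2b)^{d}\bigr]\nu_{\mu}(\mathcal{C})\\
  &\le\frac{(2l)^{d}-(2l-2b)^{d}}{(2l)^{d}}=1-\Bigl(1-\frac{b}{l}\Bigr)^{d}.
\end{align*}
The right-hand side is non-increasing in \( l \) on \( (b,\infty) \) and tends to \( 0 \) as \( l\to\infty \), so choosing \( L>b \) with \( 1-(1-b/L)^{d}<\epsilon \) gives \( \mu(\mathcal{C}+R)-\mu(\mathcal{C}+R^{\la b})<\epsilon \) for all \( l\ge L \), uniformly in \( \mathcal{C} \) and in~\( \mu \).

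The only step carrying genuine content is the product identity \( \mu(\mathcal{C}+S)=\lambda(S)\,\nu_{\mu}(\mathcal{C}) \); after that the lemma follows from the elementary fact \( 1-(1-b/l)^{d}\to 0 \). The mild technical point to watch is that \eqref{eq:3} is phrased for small neighborhoods of the identity, which is why I reduce to small cubes and extend by the \( \pi \)-system argument rather than apply it to \( R \) without comment.
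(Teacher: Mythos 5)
Your proof is correct. It uses the same underlying tool as the paper — the correspondence from Section~\ref{sec:invar-meas-cross-phase} between an invariant measure \( \mu \) on \( X \) and the induced measure \( \nu_{\mu} \) on the cross section — but you approach it from the ``pull'' side (the product formula~\eqref{eq:3}, applied with \( U = R \)), whereas the paper goes through the ``lift'' side: it first extends the family \( \{c + R\} \) to a bounded tiling \( \mathscr{W} \) using the Voronoi tessellation on the uncovered remainder, writes \( \mu = \int_{\mathcal{C}} \xi(\cdot, c)\,d\nu(c) \), observes \( \xi(\mathcal{C}+R^{\la b},c) > (1-\epsilon)\,\xi(\mathcal{C}+R,c) \) pointwise, and integrates. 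Your route avoids the auxiliary tiling entirely and yields an exact identity \( \mu(\mathcal{C}+S)=\lambda(S)\,\nu_{\mu}(\mathcal{C}) \), from which the lemma follows by the elementary volume estimate \( 1-(1-b/l)^{d}\to 0 \). The two arguments are dual presentations of the same measure correspondence; yours is slightly more streamlined, while the paper's integral form is the template it re-uses in the subsequent, more involved Rokhlin-tower constructions. Your caution about whether \eqref{eq:3} literally applies with \( U = R \) is unnecessary — the paper's requirement is only that \( \mathcal{C} \) be \( U \)-lacunary, which \( R \)-lacunarity gives directly — but the \( \pi \)-system fallback you sketch is also fine.
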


\begin{proof}
  Pick \( L \) so large that \( \lambda \bigl( R^{\la b} \bigr)/\lambda(R) > 1 - \epsilon \) for any
  \( R = [-l,l)^{d} \), \( l \ge L \), where
  \( \lambda \) is the Lebesgue measure on \( \mathbb{R}^{d} \).  We claim that such \( L \) works.
  Let \( \mu \) be a probability invariant measure on \( X \).  Since \( \mathcal{C} + R \) may be a
  proper subsets of \( X \), it does not form a tiling of the action.  It is, nevertheless, easy to
  enlarge it into a tiling as follows.  Let \( \mathscr{V} \) be the Voronoi tiling determined by \(
  \mathcal{C} \).  Define \( \mathscr{W} \) to be
  \[ \mathscr{W} = \Bigl\{ (x,c) : (x \in c + R) \textrm{ or } \Bigl( (x,c) \in \mathscr{V} \textrm{
    and } \bigl((x,c) \not \in c' + R \textrm{ for any } c' \in \mathcal{C}\bigr) \Bigr) \Bigr\}. \]
  It is easy to see that \( \mathscr{W} \) is a bounded tiling of \( \mathbb{R}^{d} \acts X \) and
  we may therefore apply results of Section \ref{sec:invar-meas-cross-phase} to decompose \( \mu \)
  as an integral over some measure \( \nu \) on \( \mathcal{C} \) of ``Lebesgue'' measures
  \( \xi(\, \cdot\, , c) \):
  \[ \mu = \int_{\mathcal{C}} \xi(\, \cdot\, , c)\, d\nu(c). \]
  Since \( \lambda(R^{\la b}) > (1-\epsilon)\lambda(R) \) we get
  \begin{displaymath}
    \begin{aligned}
      \mu(\mathcal{C} + R^{\la b}) &= \int_{\mathcal{C}} \xi(\mathcal{C} + R^{\la b}, c)\, d\nu(c) \\
      &> (1-\epsilon) \int_{\mathcal{C}} \xi(\mathcal{C} + R,c)\, d\nu(c)\\
      &= (1-\epsilon) \mu(\mathcal{C} + R) \\
      &\ge \mu(\mathcal{C} + R) - \epsilon \mu(X). 
    \end{aligned}
  \end{displaymath}
  Since the measure \( \mu \) is assumed to be a probability measure, \( \mu(X) = 1 \) and \(
  \mu(\mathcal{C} + R^{\la b}) > \mu(\mathcal{C} + R) - \epsilon \).
\end{proof}

In Ergodic Theory Rokhlin's Lemma is frequently applied countably many times to build a cover of the
phase space with a sequence of refining Rokhlin towers.  Exact details vary from application to
application, and the following theorem provides the set up that will be needed in our case.

\begin{theorem}
  \label{thm:rokhlin-towers}
  For any increasing sequence \( (b_{n})_{n=1}^{\infty} \), any real \( \kappa > 0 \), there exist
  an invariant Borel \( Z \subseteq X \), an increasing sequence of reals
  \( (l_{n})_{n=1}^{\infty} \), and a sequence of Borel sets \( \mathcal{C}_{n} \subseteq Z \)
  such that for \( R_{n} = [-l_{n}, l_{n})^{d} \) one has
  \begin{enumerate}[(i)]
  \item\label{item:Cn-is-cross-section-of-Z}
    \( (c + R_{n+1}) \cap \mathcal{C}_{n} \ne \es \) for each
    \( c \in \mathcal{C}_{n+1} \).
  \item\label{item:Z-union-Cn} \( Z = \bigcup_{n}(\mathcal{C}_{n} + R_{n}) \).
  \item\label{item:ln-multiple-kappa} Each \( l_{n} \) is an integer multiple of \( \kappa \).
  \item\label{item:ln-ge-bn} \( l_{n} \ge b_{n} \).
  \item\label{item:Cn-is-Rn-lacunary} \( \mathcal{C}_{n} \) is \( R_{n} \)-lacunary.
  \item\label{item:far-from-boundary}
    \( \mathcal{C}_{n} + R_{n} \subseteq \mathcal{C}_{n+1} + R_{n+1}^{\la b_{n+1}} \).
  \item\label{item:Z-uniformly-full-measure} \( \mu(Z) = 1 \) for any probability invariant measure
    \( \mu \) on \( X \).
  \end{enumerate}
\end{theorem}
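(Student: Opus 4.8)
The plan is to construct the sets $\mathcal{C}_n$, the reals $l_n$, and the invariant set $Z$ by a recursion on $n$, applying the Uniform Rokhlin Lemma (Theorem~\ref{thm:uniform-rokhlin-lemma}) at each stage to get a cross section whose $R_n$-thickening has uniformly large measure, and then relativizing to the part of the space already covered by the previous tower. The bookkeeping of the error budget is what makes the measure estimates in \eqref{item:Z-uniformly-full-measure} work, and the shrinking trick of Lemma~\ref{lem:small-boundary} is what allows condition \eqref{item:far-from-boundary} to be arranged while keeping those estimates.

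\medskip

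First, fix a summable sequence of error terms $\epsilon_n > 0$ with $\sum_n \epsilon_n$ small (say $\le 1$; one can be wasteful here since the final claim is only $\mu(Z)=1$). For the base case, apply Theorem~\ref{thm:uniform-rokhlin-lemma} with a square $R_1 = [-l_1, l_1)^d$, where $l_1$ is chosen to be an integer multiple of $\kappa$ with $l_1 \ge b_1$ (conditions \eqref{item:ln-multiple-kappa}, \eqref{item:ln-ge-bn}), large enough that the lemma delivers an $R_1$-lacunary cocompact cross section $\mathcal{C}_1$ with $\mu(\mathcal{C}_1 + R_1) > 1 - \epsilon_1$ for all probability invariant $\mu$. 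For the recursive step, suppose $\mathcal{C}_n$ and $l_n$ have been built. I want $\mathcal{C}_{n+1}$ to be a cross section of (essentially) all of $X$ that is much coarser than $\mathcal{C}_n$, that refines it in the sense of \eqref{item:Cn-is-cross-section-of-Z}, and whose shrunk thickening still swallows $\mathcal{C}_n + R_n$ as in \eqref{item:far-from-boundary}. Concretely: first choose $L_{n+1} \ge L$ where $L$ comes from Lemma~\ref{lem:small-boundary} applied with $b = b_{n+1}$ and error $\epsilon_{n+1}$; then apply Theorem~\ref{thm:uniform-rokhlin-lemma} to get an $R_{n+1}$-lacunary cocompact cross section $\mathcal{C}'_{n+1}$, with $l_{n+1} \ge \max(L_{n+1}, b_{n+1}, l_n + \text{(diameter of the }R_n\text{-tiles)})$ an integer multiple of $\kappa$, so large that $\mu(\mathcal{C}'_{n+1} + R_{n+1}) > 1 - \epsilon_{n+1}$; then \emph{push} the centers of $\mathcal{C}'_{n+1}$ onto nearby points of $\mathcal{C}_n$ to obtain $\mathcal{C}_{n+1}$. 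Because $l_{n+1}$ was taken large relative to $l_n$, every tile $c + R_{n+1}$ with $c \in \mathcal{C}_{n+1}$ meets $\mathcal{C}_n$ (giving \eqref{item:Cn-is-cross-section-of-Z}), and moving the center by a bounded amount changes the thickening only within a bounded rectangle $\widetilde R$, so by \eqref{eq:7} the full-size thickening $\mathcal{C}_{n+1} + R_{n+1}$ contains $\mathcal{C}'_{n+1} + R_{n+1}^{\la L'}$ for a suitable $L'$; shrinking back via Lemma~\ref{lem:small-boundary} this still has measure $> 1 - 2\epsilon_{n+1}$ (adjust constants), and one arranges $\mathcal{C}_n + R_n \subseteq \mathcal{C}_{n+1} + R_{n+1}^{\la b_{n+1}}$ either by this same inclusion once $l_{n+1}$ is big enough, or more robustly by \emph{appending} to $\mathcal{C}_{n+1}$ exactly those points of $\mathcal{C}_n$ whose $R_n$-tiles are not already deep inside some $(c+R_{n+1}^{\la b_{n+1}})$, assigning them the rectangle $R_n$ — this makes the family genuinely refining and preserves lacunarity of the large tiles while the small leftover tiles are $R_n$-lacunary as inherited from $\mathcal{C}_n$.

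\medskip

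Once the recursion is complete, set $Z_0 = \bigcup_n (\mathcal{C}_n + R_n)$ and let $Z$ be the $\mathbb{R}^d$-saturation of the set of points that lie in $\mathcal{C}_n + R_n$ for all sufficiently large $n$; restricting all the $\mathcal{C}_n$ to $Z$ gives sets inside $Z$, yielding \eqref{item:Z-union-Cn}. The measure estimate \eqref{item:Z-uniformly-full-measure} follows from a Borel--Cantelli argument that is uniform in $\mu$: by construction $\mu\bigl(X \setminus (\mathcal{C}_n + R_n)\bigr) < C\epsilon_n$ for every probability invariant $\mu$ with a constant $C$ independent of $\mu$, and since $\mathcal{C}_n + R_n \subseteq \mathcal{C}_{n+1}+R_{n+1}$ by \eqref{item:far-from-boundary}, the sets $\mathcal{C}_n + R_n$ are increasing, so $\mu(Z_0) = \lim_n \mu(\mathcal{C}_n + R_n) \ge 1 - C\inf_n \epsilon_n = 1$; since $Z \supseteq Z_0$ up to a $\mu$-null set and $Z$ is invariant, $\mu(Z) = 1$. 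Conditions \eqref{item:ln-multiple-kappa}, \eqref{item:ln-ge-bn}, \eqref{item:Cn-is-Rn-lacunary} are immediate from the choices made at each step.

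\medskip

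The main obstacle is the simultaneous reconciliation of conditions \eqref{item:Cn-is-cross-section-of-Z}, \eqref{item:far-from-boundary}, and \eqref{item:Z-uniformly-full-measure}: the Uniform Rokhlin Lemma naturally produces a \emph{fresh} cross section at scale $l_{n+1}$ with no a priori relation to $\mathcal{C}_n$, so one must perturb it — either by snapping centers to $\mathcal{C}_n$ or by surgically combining a coarse new cross section with leftover old tiles — without destroying the uniform measure bound, and this is precisely where Lemma~\ref{lem:small-boundary} and the containment \eqref{eq:7} are needed to absorb the bounded perturbation into a controlled shrinkage. The rest is a standard refining-Rokhlin-tower construction; the novelty is only that every estimate is quantified uniformly over the (possibly infinite) simplex of invariant probability measures, which is exactly what the ``uniform'' versions of the lemmas were designed to supply.
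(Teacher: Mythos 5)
There is a genuine gap at the heart of the construction: your recursion is built around \emph{adjusting} a fresh Uniform-Rokhlin cross section $\mathcal{C}'_{n+1}$ so that $\mathcal{C}_{n+1}$ ``swallows'' $\mathcal{C}_n + R_n$, but neither of the two mechanisms you propose actually produces the inclusion in item \eqref{item:far-from-boundary} without breaking another item. The ``push centers onto $\mathcal{C}_n$'' idea gives \eqref{item:Cn-is-cross-section-of-Z}, but there is simply no reason that \emph{every} tile $c+R_n$, $c\in\mathcal{C}_n$, ends up deep inside some $c'+R_{n+1}^{\la b_{n+1}}$ with $c'\in\mathcal{C}_{n+1}$: a generic $\mathcal{C}_n$-tile will straddle the boundary between two $\mathcal{C}_{n+1}$-tiles no matter how you nudge the centers, because $\mathcal{C}_n$ was produced by an independent run of the Rokhlin Lemma with no relation to the large-scale grid. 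Your fallback, appending the ``bad'' $\mathcal{C}_n$-points to $\mathcal{C}_{n+1}$ with small rectangles, destroys item \eqref{item:Cn-is-Rn-lacunary} (the appended points sit within $R_{n+1}$ of the big-tile centers), and also makes \eqref{item:far-from-boundary} ill-posed, since that item uses the single rectangle $R_{n+1}$ for \emph{all} of $\mathcal{C}_{n+1}$, so the thickening of appended points would now be enormous and overlap the big tiles.

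The idea that is missing is that the construction should \emph{remove} points, not add or shift them. The paper first builds, for every $n$, an auxiliary $R_n$-lacunary cross section $\mathcal{C}'_n$ with $\mu(\mathcal{C}'_n + R_n^{\la \tilde{b}_n}) > 1-\epsilon_n$ (this is exactly the combination of Theorem~\ref{thm:uniform-rokhlin-lemma} and Lemma~\ref{lem:small-boundary} you invoke, so that part of your plan is on target), but then defines
\[ \mathcal{C}_k \;=\; \mathcal{C}'_k \cap \bigcap_{n\ge k+1}\bigl(\mathcal{C}'_n + R_n^{\la \tilde{b}_n - l_k}\bigr), \]
i.e.\ it keeps a $k$-level point only if it lies deep inside a shrunken $n$-level tile for \emph{every} $n>k$. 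This forward-looking intersection is essential and cannot be replicated by a one-step-at-a-time recursion: to prove \eqref{item:far-from-boundary} one must know that the $(k+1)$-level point $y$ whose tile engulfs $x+R_k$ is itself still in $\mathcal{C}_{k+1}$, i.e.\ that $y$ survives the trimming coming from levels $k+2, k+3,\dots$; a recursive definition of $\mathcal{C}_{k+1}$ in terms of $\mathcal{C}_k$ alone cannot certify this. The same intersection is what makes the measure estimate go through with no extra $\mu$-null exceptional set and what makes $Z=\bigcup_k(\mathcal{C}_k+R_k)$ automatically invariant (using $\lim b_n=\infty$ and \eqref{item:far-from-boundary}), whereas your plan passes to a saturation and would then have to re-verify every item for the restricted cross sections inside $Z$. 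Finally, item \eqref{item:Cn-is-cross-section-of-Z} in the paper is obtained afterwards by adding back a sparse set of surviving higher-level points to lower levels; this enlargement is harmless precisely because trimming has already happened, which is another sign that removal-then-repair, not center-adjustment, is the right shape of argument.
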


Item \eqref{item:far-from-boundary} is the most important one here.  It says that each rectangle in
\( \mathcal{C}_{n} + R_{n} \) is inside a rectangle from \( \mathcal{C}_{n+1} + R_{n+1} \), and
moreover, it is far from its boundary.  Figure \ref{fig:rokhlin-towers} gives an illustration of
this item.  While it would be convenient to have such a covering on all orbits, this is not always
possible, and item \eqref{item:Z-uniformly-full-measure} offers the next best thing instead.
Reasons for taking \(l_{n} \) to be a multiple of \( \kappa \) will be apparent in the proof of
Theorem \ref{thm:Rudolphs-tiling}, but this restriction is not essential at any rate.  Note also
that items \eqref{item:Cn-is-cross-section-of-Z} and \eqref{item:Z-union-Cn} imply that each \(
\mathcal{C}_{n} \) is a cross section of \( \mathbb{R}^{d} \acts Z \).
\begin{figure}[ht]
  \centering
  \includegraphics[width=147.534mm]{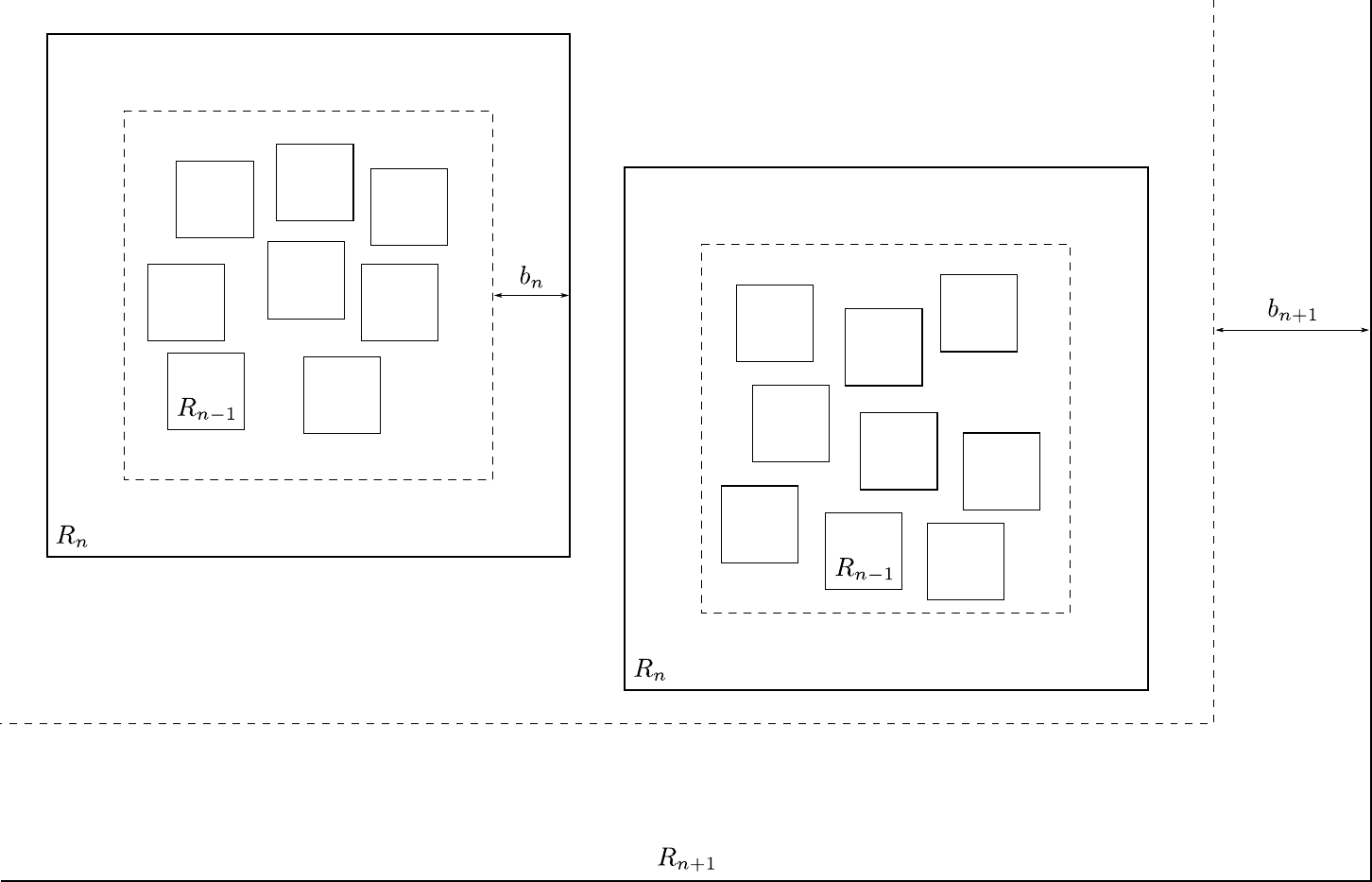}
  \caption{Rokhlin towers}
  \label{fig:rokhlin-towers}
\end{figure}

\begin{proof}
  Without loss of generality we assume that \( \lim_{n \to \infty} b_{n} = \infty \).  Pick a
  decreasing sequence \( (\epsilon_{n})_{n=1}^{\infty} \), \( \epsilon_{n} > 0 \), such that
  \( \sum_{n} \epsilon_{n} < 1 \).  Using Theorem \ref{thm:uniform-rokhlin-lemma} and Lemma
  \ref{lem:small-boundary} at each step, we construct inductively reals \( \tilde{b}_{n} \),
  \( l_{n} \), and \( R_{n} \)-lacunary cross sections \( \mathcal{C}_{n}' \subseteq X \), where \(
  R_{n} = [-l_{n}, l_{n})^{d} \), such that
  \begin{enumerate}[(a)]
  \item \( l_{n} \) is a multiple of \( \kappa \).
  \item \( l_{n} \ge \tilde{b}_{n} \ge b_{n} + 2l_{n-1} \) for all \( n \);
  \item\label{item:large-measure} \( \mu(\mathcal{C}_{n}' + R_{n}^{\la \tilde{b}_{n}}) > 1 - \epsilon_{n} \).
  \end{enumerate}
  For the base of the construction \( l_{0} \) is as assumed to be \( 0 \).

  Set for all \( k \ge 1 \)
  \[ \mathcal{C}_{k} = \mathcal{C}_{k}' \cap \Bigl( \mkern-8mu \bigcap_{n \ge k+1} \mkern-8mu \bigl(
  \mathcal{C}_{n}' + R_{n}^{\la \tilde{b}_{n} - l_{k}} \bigr) \Bigr), \]
  and let \( Z = \bigcup_{k} (\mathcal{C}_{k} + R_{k}) \).  We claim that sets
  \( \mathcal{C}_{k} \) and \( Z \) satisfy all the requirements of the theorem except, possibly,
  item \eqref{item:Cn-is-cross-section-of-Z}.  (It will be easy to enlarge sets \( \mathcal{C}_{n}
  \) to fulfill this item).

  Items (\ref{item:Z-union-Cn}-\ref{item:Cn-is-Rn-lacunary}) are evident from the construction.
  We check \eqref{item:far-from-boundary} next.  Pick \( x \in \mathcal{C}_{k} \) and note
  that by the definition of \( \mathcal{C}_{k} \), there exists \( y \in \mathcal{C}_{k+1}' \) such that
  \begin{equation}
    \label{eq:8}
    x \in y + R_{k+1}^{\la\tilde{b}_{k+1} - l_{k}}.
  \end{equation}

  Since \( \tilde{b}_{k+1} \ge b_{k+1} + 2l_{k} \), using \eqref{eq:7} we conclude that
  \[ x + R_{k} \subseteq y + R_{k+1}^{\la \tilde{b}_{k+1} - l_{k}}\! + R_{k} \subseteq y +
  R_{k+1}^{\la \tilde{b}_{k+1} - 2l_{k}} \subseteq y + R_{k+1}^{\la b_{k+1}}. \]
  To verify \eqref{item:far-from-boundary} it is therefore enough to check that this
  \( y \in \mathcal{C}_{k+1}' \) is actually an element of \( \mathcal{C}_{k+1} \).  Pick
  \( n > k+1 \); we show \( y \in \mathcal{C}_{n}' + R_{n}^{\la \tilde{b}_{n} - l_{k+1}} \).  Since
  \( x \in \mathcal{C}_{k} \), there exists \( z \in \mathcal{C}_{n}' \) such that
  \begin{equation}
    \label{eq:9}
    x \in z + R_{n}^{\la \tilde{b}_{n} - l_{k}}.    
  \end{equation}
  Using \( \tilde{b}_{k+1} -2l_{k} \ge 0 \), and equations \eqref{eq:8} and \eqref{eq:9}, we have
  the following chain of inclusions
  \[ y \in z + R_{n}^{\la \tilde{b}_{n} - l_{k}}\! - R_{k+1}^{\la\tilde{b}_{k+1} - l_{k}} \subseteq z
  + R_{n}^{\la \tilde{b}_{n} - l_{k} - (l_{k+1} - \tilde{b}_{k+1} + l_{k})} \subseteq z + R_{n}^{\la
  \tilde{b}_{n} - l_{k+1} + \tilde{b}_{k+1} - 2l_{k}} \subseteq z + R_{n}^{\la
    \tilde{b}_{n} - l_{k+1}}.  \]
  Thus \( y \in \mathcal{C}_{n}' + R_{n}^{\la \tilde{b}_{n} - l_{k+1}}\) for all \( n > k+1 \), and
  therefore \( y \in \mathcal{C}_{k} \), as required.  This checks \eqref{item:far-from-boundary}.

  Note that \eqref{item:far-from-boundary} implies that
  \( Z = \bigcup_{k}(\mathcal{C}_{k} + R_{k}) \) is an invariant subset of \( X \).  Indeed, for any
  \( c \in \mathcal{C}_{k} \) and any \( x \in c + \mathbb{R}^{d} \) we may find \( n \ge k \) so
  large that \( x \in c + R_{n} \) (recall that we assume \( \lim b_{n} = \infty \) and therefore
  also \( \lim l_{n} = \infty \)).  By item \eqref{item:far-from-boundary} we have
  \[ c \in \mathcal{C}_{k+1} + R_{k+1}^{\la b_{k+1}} \subseteq \mathcal{C}_{k+2} + R_{k+2}^{\la
    b_{k+1} + b_{k+2}} \subseteq \cdots \subseteq \mathcal{C}_{k+m} + R_{k+m}^{\la
    \sum_{i=1}^{m}b_{k+i}}.\]
  For \( m \) so large that \( \sum_{i=1}^{m} b_{k+i} \ge l_{n} \) we have
  \[ x \in c + R_{n} \subseteq \mathcal{C}_{k+m} + R_{k+m}^{\la \sum_{i=1}^{m}b_{k+i} - l_{n}}
  \subseteq \mathcal{C}_{k+m} + R_{k+m} \subseteq Z. \]

  To see item \eqref{item:Z-uniformly-full-measure}, note first that for any \( k \)
  \[ \bigcap_{n \ge k} \bigl( \mathcal{C}_{n}' + R_{n}^{\la \tilde{b}_{n}} \bigr) \subseteq \bigl(
  \mathcal{C}_{k}' + R_{k}\bigr) \cap \Bigl( \mkern-8mu \bigcap_{n \ge k+1}
  \mkern-8mu(\mathcal{C}_{n}' + R_{n}^{\la \tilde{b}_{n}}) \Bigr) \subseteq \biggl( \mathcal{C}_{k}'
  \cap \Bigl( \mkern-8mu \bigcap_{n \ge k+1} \mkern-8mu (\mathcal{C}_{n}' + R_{n}^{\la \tilde{b}_{n}
    - l_{k}}) \Bigr)\biggr) + R_{k} = \mathcal{C}_{k} + R_{k}.\]
  And in particular, for all \( k \ge 1 \) and all measures \( \mu \)
  \[ \mu\Bigl( \bigcap_{n \ge k} \bigl( \mathcal{C}_{n}' + R_{n}^{\la \tilde{b}_{n}} \bigr) \Bigr) \le
  \mu(\mathcal{C}_{k} + R_{k}). \]
  Whence by item \eqref{item:large-measure} in the construction of \( \mathcal{C}_{k} \),
  \[ \mu(Z) = \mu\bigl(\, \bigcup_{k} (\mathcal{C}_{k} + R_{k}) \bigr) \ge \sup_{k}
  \mu(\mathcal{C}_{k} + R_{k}) \ge \sup_{k} \mu\Bigl( \bigcap_{n \ge k} \bigl( \mathcal{C}_{n}' +
  R_{n}^{\la \tilde{b}_{n}} \bigr) \Bigr) \ge \lim_{k \to \infty} \Bigl( 1 - \sum_{n=k}^{\infty}
  \epsilon_{k} \Bigr) = 1.\]

  This almost finishes the proof of the theorem.  The only problem is item
  \eqref{item:Cn-is-cross-section-of-Z}.  It is possible to have points \( c \in \mathcal{C}_{n} \)
  such that the rectangle \( c + R_{n} \) has no points from \( \mathcal{C}_{k} \) for \( k < n \).
  The easy fix is to add all such \( c \in \mathcal{C}_{n} \) to \( \mathcal{C}_{k} \) for
  \( k < n \), i.e., we set
  \[ \bar{\mathcal{C}}_{k} = \mathcal{C}_{k} \cup \bigcup_{n \ge k+1}\{z \in \mathcal{C}_{n} :
  \mathcal{C}_{m} \cap (z + R_{n}) = \es \textrm{ for all } m < n\}. \]
  This enlargement does not violate any of the items
  (\ref{item:Z-union-Cn}-\ref{item:Z-uniformly-full-measure}), and sets \( \bar{\mathcal{C}}_{k} \) are
  as desired.
\end{proof}
\section{Rudolph's regular tilings}
\label{sec:regul-tilings-orbits}

In this section we employ Theorem \ref{thm:rokhlin-towers} and construct a regular tiling of a
subspace \( Z \subseteq X \) of the uniformly full measure.  Regularity refers to the fact that
orbits will be tiled by rectangles of finitely many shapes.  To describe possible tiles, we first
pick an irrational number \( \alpha > 0 \), for instance \( \alpha = \sqrt{2} \) will be good
enough.  For a vector \( \vec{a} \in \{1, \alpha\}^{d} \) let
\[ \widetilde{R}_{\vec{a}} = \prod_{i=1}^{d} \bigl[-a(i)/2,a(i)/2\bigr), \]
in other words each edge of \( \widetilde{R}_{\vec{a}} \) has length \( 1 \) or \( \alpha \).  We
let \( \one \) denote the vector \( (1,\dotsc,1) \in \{1,\alpha\}^{d} \) and \( \widetilde{R}_{\one} \) is
therefore a square with side one.

\begin{figure}[ht]
  \centering
  \includegraphics[width=115.776mm]{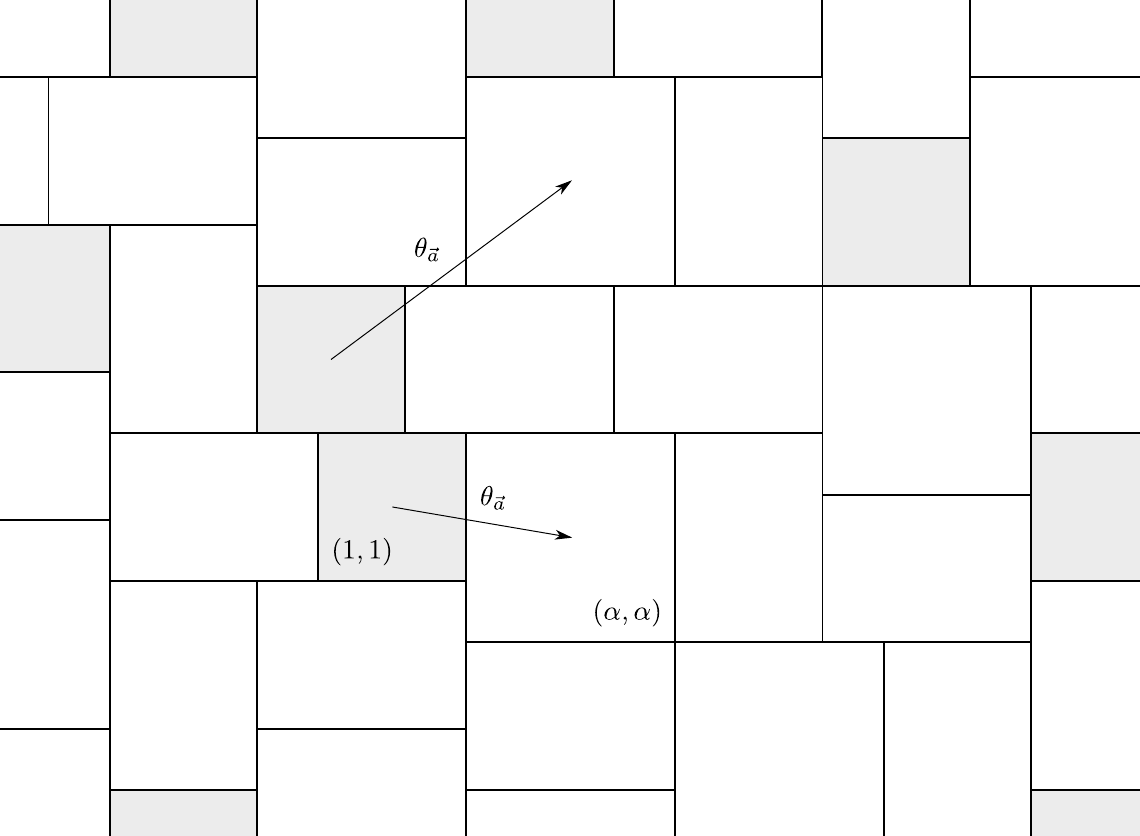}
  \caption{Regular tiling of an orbit.  There are four types of tiles. The map
    \( \theta_{\vec{a}} \), \( \vec{a} = (\alpha, \alpha) \), is a match between \( (1,1) \)-tiles
    (which are in gray) and \( (\alpha,\alpha) \)-tiles.}
  \label{fig:regular-tiling}
\end{figure}

Using Theorem \ref{thm:rokhlin-towers}, one can extract the following result from \cite[Section
3]{rudolph_rectangular_1988}.

\begin{theorem}[Essentially Rudolph]
  \label{thm:Rudolphs-tiling}
  Given a free Borel flow \( \mathbb{R}^{d} \acts X \) and an irrational \( \alpha > 0 \), there
  exists an invariant subset \( Z \subseteq X \) of uniformly full measure such that for the
  restriction of the action \( \mathbb{R}^{d} \acts Z \) the following holds.  There exists a
  rectangular tiling \( \mathscr{R} \) of \( \mathbb{R}^{d} \acts Z \) with associated cross section
  \( \mathcal{C} \), a Borel partition
  \( \mathcal{C} = \bigsqcup_{\vec{a} \in \{1,\alpha\}^{d}} \mathcal{C}_{\vec{a}} \mkern1mu\), and
  Borel bijections \( \theta_{\vec{a}} : \mathcal{C}_{\one} \to \mathcal{C}_{\vec{a}} \mkern1mu\),
  \( \vec{a} \in \{1, \alpha\}^{d} \setminus \{\one\} \), such that for
  \( \mathscr{R}_{c} = c + R_{c} \) one has
  \begin{enumerate}[(i)]
  \item\label{item:tiles-are-regular} \( R_{c} = \widetilde{R}_{\vec{a}}\)\, for all
    \( c \in \mathcal{C}_{\vec{a}} \).
  \item\label{item:bijection-between-tile-types} \( c\, E_{\mathcal{C}}\, \theta_{\vec{a}} (c) \)\, for
    all \( c \in \mathcal{C} \) and all \( \vec{a} \in \{1, \alpha\}^{d} \setminus \{\one\} \).
  \end{enumerate}
\end{theorem}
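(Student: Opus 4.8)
The plan is to carry out, in the Borel category, the tiling construction of D.~Rudolph from \cite[Section~3]{rudolph_rectangular_1988}, feeding it our uniform Rokhlin tower result (Theorem~\ref{thm:rokhlin-towers}) in place of the measure-theoretic Rokhlin lemma; since Theorem~\ref{thm:rokhlin-towers} already produces an invariant set $Z$, of uniformly full measure, on which the towers live, this substitution is exactly what replaces Rudolph's ``almost every orbit'' by ``every orbit of $Z$''. First I would fix the geometry: put $\kappa=1+\alpha$, so that for every positive integer $k$ one has $2k\kappa=2k\cdot 1+2k\cdot\alpha\in\mathbb{Z}_{\ge 0}+\mathbb{Z}_{\ge 0}\alpha$, whence an interval of length $2k\kappa$ decomposes into finitely many intervals of lengths $1$ and $\alpha$, and therefore a cube $[-k\kappa,k\kappa)^{d}$ decomposes into copies of the regular rectangles $\widetilde R_{\vec a}$, $\vec a\in\{1,\alpha\}^{d}$, in such a way that all $2^{d}$ shapes occur. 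Then I would pick a rapidly increasing sequence of positive reals $(b_{n})$ — how fast is dictated by the error bounds (Lemma~\ref{lem:small-boundary}) that enter Rudolph's argument — and apply Theorem~\ref{thm:rokhlin-towers} with this $\kappa$ and $(b_{n})$, obtaining $Z$, the radii $l_{n}=k_{n}\kappa$ ($k_{n}$ a positive integer), the Borel sets $\mathcal{C}_{n}\subseteq Z$, and the cubes $R_{n}=[-l_{n},l_{n})^{d}$ with properties \eqref{item:Cn-is-cross-section-of-Z}--\eqref{item:Z-uniformly-full-measure}; note that $R_{n}$ has side length $2k_{n}\kappa$, an admissible size for a regular tiling.

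I would then build $\mathscr{R}$ as an increasing limit over the tower. At stage~$1$, tile each cell $c+R_{1}$, $c\in\mathcal{C}_{1}$, by a fixed regular pattern that uses all $2^{d}$ shapes, getting a regular tiling $\mathscr{R}^{(1)}$ of $B_{1}:=\mathcal{C}_{1}+R_{1}$. At stage $n+1$, given a regular tiling $\mathscr{R}^{(n)}$ of $B_{n}:=\mathcal{C}_{n}+R_{n}$, tile each cell $c'+R_{n+1}$ by keeping $\mathscr{R}^{(n)}$ on the finitely many stage-$n$ cells inside it and filling the rest of $c'+R_{n+1}$ with regular tiles, obtaining $\mathscr{R}^{(n+1)}$ on $B_{n+1}$. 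By item~\eqref{item:far-from-boundary} every stage-$n$ cell is deep inside a stage-$(n+1)$ cell, so the tile through any fixed point of $Z$ stabilizes, and $\mathscr{R}=\bigcup_{n}\mathscr{R}^{(n)}$ is a well-defined rectangular tiling of $Z$ all of whose tiles are shapes $\widetilde R_{\vec a}$. With $\mathcal{C}$ its associated cross section and $\mathcal{C}_{\vec a}=\{c\in\mathcal{C}:R_{c}=\widetilde R_{\vec a}\}$ we get the Borel partition $\mathcal{C}=\bigsqcup_{\vec a\in\{1,\alpha\}^{d}}\mathcal{C}_{\vec a}$ and item~\eqref{item:tiles-are-regular}, while item~\eqref{item:Z-uniformly-full-measure} of Theorem~\ref{thm:rokhlin-towers} gives that $Z$ has uniformly full measure.

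The \emph{main obstacle} is the step ``fill the rest of $c'+R_{n+1}$ with regular tiles''. If the stage-$n$ cells sit at arbitrary real offsets inside $c'+R_{n+1}$ then their complement need not be cuttable into cuboids with edges in $\mathbb{Z}_{\ge 0}+\mathbb{Z}_{\ge 0}\alpha$, and so cannot be partitioned into the $\widetilde R_{\vec a}$ at all; this is precisely the technical core of \cite[Section~3]{rudolph_rectangular_1988}, which I would reproduce. The resolution is to strengthen the inductive construction of the $\mathcal{C}_{n}$'s so that, within each stage-$(n+1)$ cell, the stage-$n$ cells are arranged into a rectangular grid filling a sub-cuboid in one corner, with grid spacing a multiple of $\kappa$. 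The complement of such a grid inside $c'+R_{n+1}$ is then a finite union of cuboids with edge lengths non-negative integer multiples of $\kappa=1+\alpha$, hence all in $\mathbb{Z}_{\ge 0}+\mathbb{Z}_{\ge 0}\alpha$, and can be tiled by the $\widetilde R_{\vec a}$. Setting up the grid amounts to translating the stage-$n$ cells by bounded amounts inside the vastly larger stage-$(n+1)$ cell, which the margins $b_{n+1}$ leave room for and which does not disturb the already-frozen internal tilings since each cell moves rigidly; the resulting perturbation of the sets $\mathcal{C}_{n}+R_{n}$ is absorbed into the error estimates underlying Theorem~\ref{thm:rokhlin-towers} — this is why the $l_{n}$ were taken to be multiples of $\kappa$ and why $(b_{n})$ must grow quickly.

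Lastly I would produce the matches. The stage-$1$ pattern already places all $2^{d}$ shapes on each orbit of $Z$, and since there are infinitely many stage-$1$ cells per orbit and the recursion never destroys a stage-$1$ tile, each $\mathcal{C}_{\vec a}$ meets every orbit of $Z$ in a countably infinite set. Thus for each $\vec a$ the sets $\mathcal{C}_{\one}$ and $\mathcal{C}_{\vec a}$ are Borel complete sections of the countable Borel equivalence relation $E_{\mathcal{C}}=E_{X}^{\mathbb{R}^{d}}\cap(\mathcal{C}\times\mathcal{C})$, each meeting every class in an infinite set; fixing, via the Feldman--Moore theorem, a Borel enumeration of each $E_{\mathcal{C}}$-class and matching the $k$-th element of $\mathcal{C}_{\one}$ in a class with the $k$-th element of $\mathcal{C}_{\vec a}$ in the same class gives a Borel bijection $\theta_{\vec a}:\mathcal{C}_{\one}\to\mathcal{C}_{\vec a}$ with $c\,E_{\mathcal{C}}\,\theta_{\vec a}(c)$, which is item~\eqref{item:bijection-between-tile-types}.
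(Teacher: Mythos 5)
You correctly set up the scaffolding — $\kappa=1+\alpha$, Theorem~\ref{thm:rokhlin-towers}, the inductive extension from $\mathcal{C}_{n}+R_{n}$ to $\mathcal{C}_{n+1}+R_{n+1}$, and the observation that a cube of side length a multiple of $1+\alpha$ admits a canonical decomposition into the $\widetilde R_{\vec a}$ — and you correctly identify the main obstacle, that stage-$n$ cells sit at arbitrary real offsets inside a stage-$(n+1)$ cell. But your proposed resolution, translating the stage-$n$ cells so that they fill a $\kappa$-spaced rectangular grid in one corner of the stage-$(n+1)$ cell, has a genuine gap: those translations are not summable, so the limit tiling does not exist. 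Snapping a cell to a $\kappa$-grid requires moving it by a distance of order $\kappa$, and these moves compound across stages — when a stage-$(n+1)$ cell is re-gridded at stage $n+2$, it drags all its frozen stage-$\le n$ content with it. The cumulative displacement of any given stage-$1$ tile is then a divergent sum, so the tile through a fixed point of $Z$ never stabilizes and the claimed union $\bigcup_{n}\mathscr{R}^{(n)}$ is not well-defined. Your appeal to Lemma~\ref{lem:small-boundary} and to fast-growing $b_{n}$ does not help here: growing the margins makes room for the translations but does not make them small, and ``strengthening the inductive construction of the $\mathcal{C}_{n}$'s'' to be grid-aligned from the start is an unexamined and substantially harder problem if one still needs $\mu(Z)=1$ for every invariant $\mu$.

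The paper avoids this by using the irrationality of $\alpha$ at exactly this point — something your sketch never invokes, although it is the only place the hypothesis enters. Because $\{m_{1}+m_{2}\alpha : m_{1},m_{2}\in\mathbb{N}\}$ is asymptotically dense, one fixes $\epsilon_{k}=2^{-k}$ and chooses $b_{k}\ge N(\epsilon_{k})+2(1+\alpha)$; then at stage $k+1$ each already-tiled block $c_{i}+R_{k}^{\la b_{k}}$ need only be shifted by a vector $\vec v_{i}$ with $\|\vec v_{i}\|_{\infty}<\epsilon_{k}$ to align with the canonical $\{m_{1}+m_{2}\alpha\}$-tiling of $R_{k+1}^{\la b_{k+1}}$ emanating from the corner (not a uniform $\kappa$-grid). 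Since $\sum_{k}\epsilon_{k}<\infty$, each tile's position converges, which is what legitimizes passing to the limit. To repair your argument you would need to replace the grid-alignment idea with this small-perturbation step, or find some other summable control on the per-stage translations.
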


This theorem asserts that we can find a tiling of \( Z \) which uses only \( 2^{d} \) different
tiles.  It is generally easy to construct tilings with approximate properties of tiles, but getting
exact restrictions on length of edges is typically a more difficult task.  Naturally we would
want to have such a tiling on all of \( X \), but as of today, it is open whether this can always be
achieved.

During the construction of the tiling, we shall ensure that each type of tile occurs on each orbit
infinitely often.  In fact, we shall have a Borel witness for that, namely Borel
matchings \( \theta_{\vec{a}} : \mathcal{C}_{\one} \to \mathcal{C}_{\vec{a}} \) between tiles of
type \( \one \) and of type \( \vec{a} \) within each orbit.  Figure \ref{fig:regular-tiling} shows
how such a tiling may look like.

While the construction in \cite[Section 3]{rudolph_rectangular_1988} is presented relative to single
measure \( \mu \) on \( X \), it only uses existence of exhausting towers given in Theorem
\ref{thm:rokhlin-towers}.  Existence of Borel correspondences \( \theta_{\vec{a}} \) is almost
immediate from the construction.  For convenience of the reader, we present a sketch of the
argument.  While we believe that the reader will have no difficulties in supplying the necessary
details, if needed the rigorous proof can be found in \cite[Section 3]{rudolph_rectangular_1988}.

\begin{proof}[Sketch of Proof]
  The starting point is to notice that irrationality of \( \alpha \) implies that the set of points
  \[ \{\, m_{1} + m_{2} \alpha \,|\, m_{1}, m_{2} \in \mathbb{N} \,\} \]
  is \emph{asymptotically dense} in the real line \( \mathbb{R} \) in the sense that for any
  \( \epsilon > 0 \) there exists \( N(\epsilon) \) such that for any \( x \ge N(\epsilon) \) there
  are \( m_{1}, m_{2} \in \mathbb{N} \) for which \( |x - (m_{1} + m_{2}\alpha)| < \epsilon \).
  Geometrically this means that any sufficiently long interval after being perturbed by a small
  \( \epsilon \) can be partitioned in to sub-intervals of lengths \( 1 \) or \( \alpha \).

  Imagine now the following situation depicted on Figure \ref{fig:simple-tile-extension}.  Suppose
  we have a square \( R \) with side \( K(1+\alpha) \) for some integer \( K \).  Suppose also that
  \( R \) sits inside a much larger square \( R' \) with side length \( K' (1 + \alpha ) \) for some
  (much larger) integer \( K' \); suppose furthermore that the distance from \( R \) to the boundary
  of \( R' \) is at least \( N(\epsilon) \) in every coordinate direction.
  
    For notational convenience let us place the origin at the bottom left corner of \( R' \), so
  \( R' = [0, K' + K'\alpha)^{d} \), and let
  \[ R = \prod_{i=1}^{d} [a_{i}, b_{i}),\quad 0 < a_{i} < b_{i} < K + K\alpha. \]
  Since we assume that \( R \) is far from the boundary of \( R' \), \( a_{i} \ge N(\epsilon) \).
  We therefore may move \( R \) along the \( x \)-axis by a small \( \delta_{1} \),
  \( |\delta_{1}| \le \epsilon \), in such a way that \( a_{1} + \delta_{1} = m_{1} + m_{2} \alpha \) for
  some \( m_{1},m_{2} \in \mathbb{N} \).  The interval \( I_{1} = [0, a_{1} + \delta_{1}) \) may thus be
  tiled by intervals of length \( 1 \) and \( \alpha \).  The interval
  \[ I_{2} = [a_{1} + \delta_{1}, a_{1} +\delta_{1} + K + K\alpha) = [a_{1} + \delta, b_{1}+ \delta_{1}) \]
  can also be tiled in such a way since we assume that sides of \( R \) have length
  \( K(1 + \alpha) \).  Finally, the interval
  \[ I_{3} = [a_{1} + \delta_{1} + K + K \alpha, K' + K' \alpha) \]
  may be partitioned into segments of lengths \( 1 \) and \( \alpha \) since \( R' \) is supposed to
  have length \( K' + K' \alpha \); in particular 
  \[ K' + K' \alpha - a_{1} - \delta_{1} - K - K\alpha = \tilde{m}_{1} + \tilde{m}_{2}\alpha \]
  for some integers \( \tilde{m}_{1}, \tilde{m}_{2} \in \mathbb{N} \).

  \begin{figure}[ht]
    \centering
    \includegraphics[width=124.214mm]{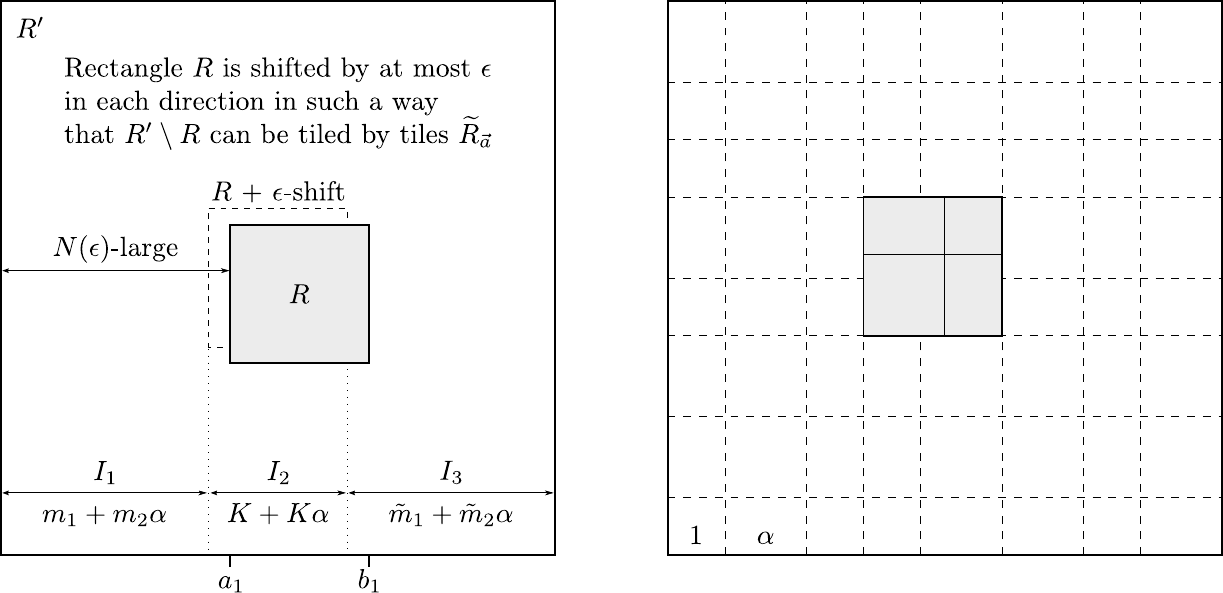}
    \caption{Moving the rectangle \( R \) and extending the tiling to \( R' \).}
    \label{fig:simple-tile-extension}
  \end{figure}

  The same can be done along other coordinate directions, and one may find a vector \( \vec{v} \) of
  \( \ell^{\infty} \)-norm at most \( \epsilon \) such that once \( R \) is shifted to
  \( R + \vec{v} \), the rectangle \( R' \) can be tiled by regular tiles
  \( \widetilde{R}_{\vec{a}} \), \( \vec{a} \in \{1, \alpha\}^{d} \), in a way that is consistent
  with the rectangle \( R + \vec{v} \).

  To summarize, given a square \( R \) of length \( K(1+\alpha) \) which is tiled by rectangles
  \( \widetilde{R}_{\vec{a}} \), \( \vec{a} \in \{1, \alpha\}^{d} \), and which is
  \( N(\epsilon) \)-far from the boundary of \( R' \), we may shift \( R \) by an
  \( \epsilon \)-small vector and extend the tiling of (shifted) \( R \) to a regular tiling of
  \( R' \).  The right square in Figure \ref{fig:simple-tile-extension} shows how such an extension
  may look like.

  Now to the construction of the regular tiling.  Let \( (\epsilon_{k})_{k=1}^{\infty} \) be a
  sufficiently fast decreasing sequence, e.g., \( \epsilon_{k} = 2^{-k} \); let
  \( N(\epsilon_{k}) \in \mathbb{R}^{>0} \) be so large that for any \( x \ge N(\epsilon_{k}) \)
  there exists \( m_{1}, m_{2} \in \mathbb{N} \) such that
  \[ |x - m_{1} - m_{2}\alpha| < \epsilon_{k}. \]
  Pick a sequence \( (b_{k})_{k=1}^{\infty} \) such that
  \begin{enumerate}[(a)]
  \item \( b_{k} \) is a multiple of \( (1+\alpha) \);
  \item \( b_{k} \ge N(\epsilon_{k}) + 2(1+\alpha) \).
  \end{enumerate}
  An application of Theorem \ref{thm:rokhlin-towers} allows us to find the following objects:
  \begin{itemize}
  \item an invariant subset \( Z \subseteq X \) of uniformly full measure;
  \item an increasing sequence \( (l_{k})_{k=1}^{\infty} \) of reals, \( l_{k } \ge b_{k} \), where
    each \( l_{k} \) is a multiple of \( (1+\alpha) \);
  \item Borel \( R_{k} \)-lacunary cross sections \( \mathcal{C}_{k} \subseteq Z \) such that
    \( \mathcal{C}_{k} + R_{k} \subseteq \mathcal{C}_{k+1} + R^{\la b_{k}} \), where
    \( R_{k} = [-l_{k},l_{k})^{d} \).
  \end{itemize}
  Our plan is to inductively tile regions \( \mathcal{C}_{k} + R_{k}^{\la b_{k}} \).  At step \( k+1 \)
  of the construction we may shift tiles in \( \mathcal{C}_{k} + R_{k}^{\la b_{k}} \) by at most \(
  \epsilon_{k} \), thus ensuring that the sum of all shifts is finite, and each tile converges to a
  limiting position as \( k \to \infty \).

  \addpicturenocap{r}{Canonical_tiling}{28.316mm}{}
  \stepcounter{figure}
  Observe that since \( b_{k} \) and \( l_{k} \) are
  multiples of \( (1+\alpha) \), sides of \( R^{\la b_{k}} \) are also multiples of
  \( (1+\alpha) \).  And therefore \( R^{\la b_{k}} \) can be partitioned into tiles
  \( \widetilde{R}_{\vec{a}} \) in a canonical fashion by partitioning each side of
  \( R^{\la b_{k}} \) into consecutive intervals of length \( 1 \) and \( \alpha \) and taking the
  product of these partitions (see figure to the left).  We shall refer to this partition as to the
  \emph{canonical tiling} of \( R^{\la b_{k}} \).

  In particular, for the base of our construction each square \( c + R_{1}^{\la b_{1}} \) in
  \( \mathcal{C}_{1} + R_{1}^{\la b_{1}} \) can be tiled in this canonical way.

  For the induction step, we have tiled \( \mathcal{C}_{k} + R_{k}^{\la b_{k}} \), and proceed to
  tile \( \mathcal{C}_{k+1} + R_{k+1}^{\la b_{k+1}} \).  Pick some \( c \in \mathcal{C}_{k+1} \),
  and consider the corresponding square \( c + R_{k+1}^{\la b_{k+1}} \).  In general, it contains
  several points \( c_{1}, \dotsc, c_{m} \in \mathcal{C}_{k} \).  Each square
  \( c_{i} + R_{k}^{\la b_{k}} \) has been tiled, and we seek an extension of this tiling to a
  tiling of \( c + R_{k}^{\la b_{k+1}} \).  It is helpful to consult at this point Figure
  \ref{fig:extending-tiling-general-case}, on which \( m = 3 \) and three \( R_{k}^{\la b_{k}} \)
  rectangles are marked in gray.  By assumption, squares \( c_{i} + R_{k} \) are all inside
  \( c + R_{k+1}^{\la b_{k+1}} \) and have pairwise empty intersections.
  \( R_{k+1}^{\la b_{k+1}} \) admits the canonical tiling, indicated by dashed lines in Figure
  \ref{fig:extending-tiling-general-case}.  We shall use it to ``feel the gaps'' between
  squares~\( R_{k} \).  Now comes the crucial idea in the construction.  One realizes that it is
  always possible to move squares~\( R_{k} \) around \( c_{i} \) by at most \( 1 + \alpha \) in such
  a way that corners of \( R_{k} \) will coincide with nodes of the canonical tiling of
  \( R_{k+1}^{\la b_{k+1}} \).  We emphasize that tiles of \( \mathcal{C}_{k} + R_{k}^{\la b_{k}}\)
  constructed up to this stage are not being moved, we are rather claiming that one may select
  ``windows'' of size \( R_{k} \) around each point \( c_{i} \) with corners on the nodes of the
  canonical tiling of \( R_{k+1}^{\la b_{k+1}} \) and these ``windows'' are no further than
  \( 1+\alpha \) in each coordinate direction from \( c_{i} + R_{k} \).  It is easy to see that such
  squares may be chosen to be disjoint for distinct \( c_{i} \).  These windows \( R_{k} \) are
  depicted on Figure \ref{fig:extending-tiling-general-case} around each \( R_{k}^{\la b_{k}} \).

    \begin{figure}[ht]
      \centering
      \includegraphics[width=109.215mm]{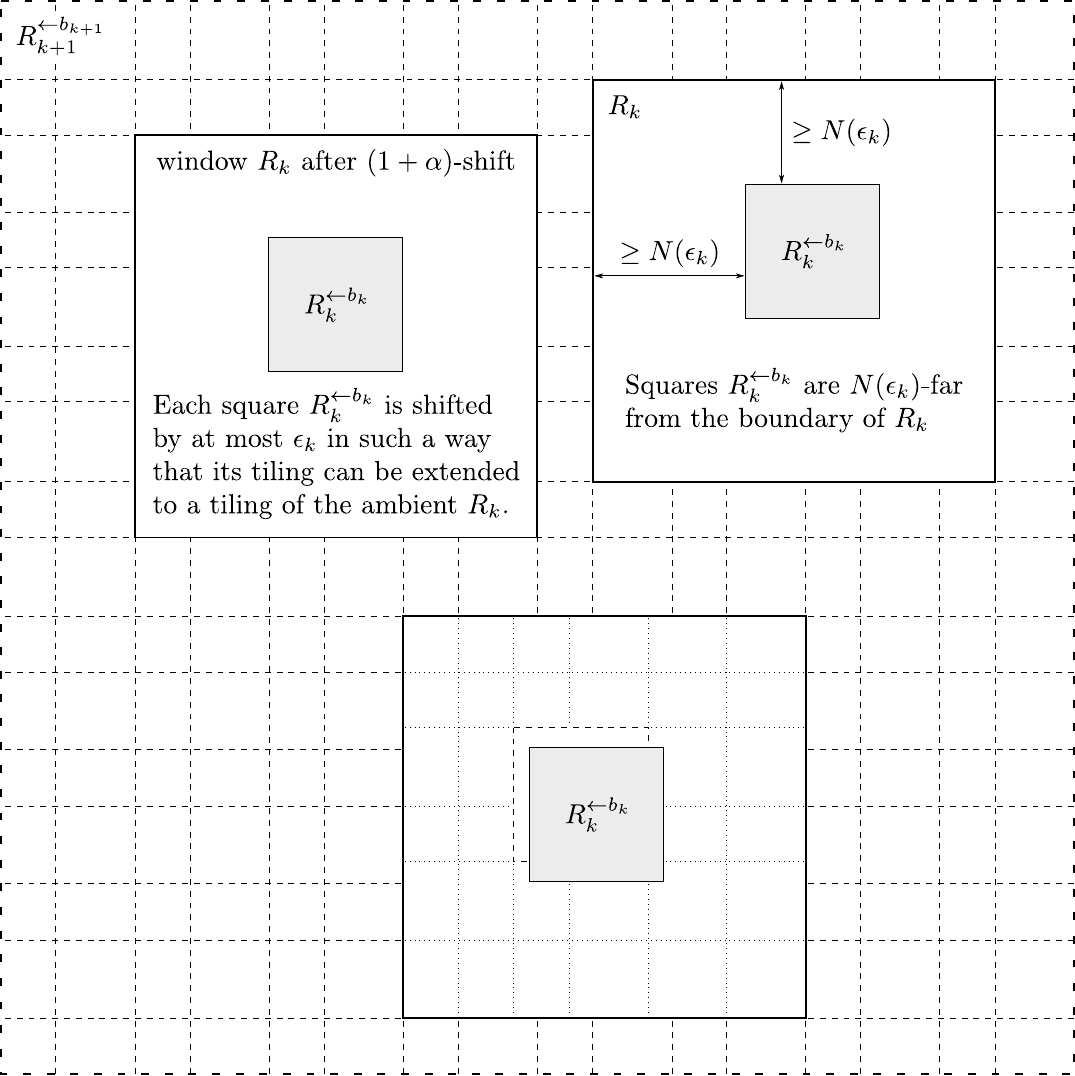}
    \caption{Extending tiling from \( \mathcal{C}_{n} + R_{n}^{\la b_{n}} \) to \( \mathcal{C}_{n+1}
      + R_{n+1}^{\la b_{n+1}}\).}
    \label{fig:extending-tiling-general-case}
  \end{figure}

  Now we are going to move each \( R_{k}^{\la b_{k}} \) region by a vector of
  \( \ell^{\infty} \)-norm at most \( \epsilon_{k} \); according to the algorithm described at the
  beginning of this sketch, for each \( c_{i} \) one can find \( \vec{v}_{i} \in \mathbb{R}^{d} \),
  \( \|\vec{v}_{i}\|_{\infty} < \epsilon_{k} \), such that the tiling of
  \( c_{i} + \vec{v}_{i} + R_{k}^{\la b_{k}} \) can be extended to the tiling of the window
  \( R_{k} \) around \( c_{i} \).  On Figure \ref{fig:extending-tiling-general-case} the bottom
  \( R_{k} \) exhibits this process.  Finally, the gaps between \( R_{k} \) are filled by
  the canonical tiling, which extends the tiling of each \( R_{k} \) to a tiling of
  \( R_{k+1}^{\la b_{k+1}} \).  To summarize, when extending the tiling of
  \( \mathcal{C}_{k} + R_{k}^{\la b_{k}} \) to a tiling of
  \( \mathcal{C}_{k+1} + R_{k+1}^{\la b_{k+1}} \), we shift each \( c \in \mathcal{C}_{k} \)
  (together with all the tiles in \( c + R_{k}^{\la b_{k}} \)) by no more than \( \epsilon_{k} \) in
  each coordinate direction.
  This describes the step of induction.

  While strictly speaking the tiling of \( \mathcal{C}_{k+1} + R_{k+1}^{\la b_{k+1}} \) extends the
  tiling of \( \mathcal{C}_{k} + R_{k}^{\la b_{k}} \) only up to an \( \epsilon_{k} \)-shift, using
  \( \sum_{k} \epsilon_{k} < \infty \) we may naturally define the limit tiling of
  \( \bigcup_{k} \bigl( \mathcal{C}_{k} + R_{k}^{\la b_{k}} \bigr) = Z \).  This results in a
  construction of a tiling of \( Z \) by tiles of the form \( \widetilde{R}_{\vec{a}} \),
  \( \vec{a} \in \{1,\alpha\}^{d} \), like in Figure \ref{fig:regular-tiling}.

  So, let \( \widetilde{\mathcal{C}} \) denote the set of centers of all the tiles in \( Z \), and let
  \[ \widetilde{\mathcal{C}} = \bigsqcup_{\vec{a} \in
    \{1,\alpha\}^{d}}\widetilde{\mathcal{C}}_{\vec{a}}\]
  be the decomposition of tiles into the \( 2^{d} \) types according to lengths of their sides.
  
  It remains to explain how the maps
  \( \theta_{\vec{a}} : \widetilde{\mathcal{C}}_{\one} \to \widetilde{\mathcal{C}}_{\vec{a}} \) are
  constructed.  It is immediate from the construction that for any \( c \in \mathcal{C}_{k} \) the
  tiling of \( c + R_{k}^{\la b_{k}} \) has equally many tiles of each type.  For a given
  \( c \in \mathcal{C}_{k+1} \), if \( \theta_{\vec{a}} \) is defined on each
  \( c_{i} + R_{k}^{\la b_{k}} \),
  \( c_{i} \in \mathcal{C}_{k} \cap \bigl( c + R_{k+1}^{\la b_{k+1}} \bigr) \), then in
  \( (c + R_{k+1}^{\la b_{k+1}}) \setminus (\mathcal{C}_{k} + R_{k}) \) we have the same number of
  tiles of each type, and therefore the map \( \theta_{\vec{a}} \) can be extended to a matching
  between \( \one \)-tiles and \( \vec{a} \)-tiles on \( c + R_{k+1}^{\la b_{k+1}} \) in a Borel
  way.  In the limit \( \theta_{\vec{a}} \) is a matching from \( \widetilde{\mathcal{C}}_{\one} \)
  onto \( \widetilde{\mathcal{C}}_{\vec{a}} \), as desired.
\end{proof}

We conclude this section by showing how the above theorem gives rise to a LOE between invariant
subsets of uniformly full measure.

\begin{theorem}
  \label{thm:LOE-uniformly-full}
  Let \( \mathbb{R}^{d} \acts X \) and \( \mathbb{R}^{d} \acts Y \) be a pair of free Borel flows
  having the same cardinality of the sets of pie measures.  There exist invariant Borel
  subsets \( Z_{X} \subseteq X \) and \( Z_{Y} \subseteq Y \) of uniformly full measure and a LOE \(
  \phi : Z_{X} \to Z_{Y} \) between restrictions of the flows.
\end{theorem}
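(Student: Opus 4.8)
The plan is to apply Theorem~\ref{thm:Rudolphs-tiling} to both flows, to match up the resulting cross-section equivalence relations by means of the Dougherty--Jackson--Kechris classification, and then to lift the matching to a tile-by-tile translation, which is automatically an LOE. Fix an irrational \( \alpha>0 \) and apply Theorem~\ref{thm:Rudolphs-tiling} to \( \mathbb{R}^{d}\acts X \): it produces an invariant \( Z_{X}\subseteq X \) of uniformly full measure, a regular rectangular tiling \( \mathscr{R}^{X} \) of \( \mathbb{R}^{d}\acts Z_{X} \) with associated cross section \( \mathcal{C}^{X}=\bigsqcup_{\vec{a}}\mathcal{C}^{X}_{\vec{a}} \), and Borel bijections \( \theta^{X}_{\vec{a}}\colon\mathcal{C}^{X}_{\one}\to\mathcal{C}^{X}_{\vec{a}} \) with \( \theta^{X}_{\vec{a}}(c)\,E_{\mathcal{C}^{X}}\,c \); put \( \theta^{X}_{\one}=\mathrm{id} \). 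Do the same for \( \mathbb{R}^{d}\acts Y \), obtaining \( Z_{Y},\mathscr{R}^{Y},\mathcal{C}^{Y},\theta^{Y}_{\vec{a}} \). If \( X \) admits no invariant probability measure then, by the ergodic decomposition theorem together with the hypothesis on pie measures, neither does \( Y \), and \( Z_{X}=Z_{Y}=\es \) works trivially; so assume from now on that both flows carry a pie measure.

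The Borel matchings \( \theta_{\vec{a}} \) allow us to contract each cross section onto its \( \one \)-part. Write \( E_{\mathcal{C}^{X}}=E^{\mathbb{R}^{d}}_{Z_{X}}\cap(\mathcal{C}^{X}\times\mathcal{C}^{X}) \) for the cross-section relation and \( E_{\mathcal{C}^{X}_{\one}} \) for its restriction to \( \mathcal{C}^{X}_{\one} \). Then \( \Theta^{X}\colon(c,\vec{a})\mapsto\theta^{X}_{\vec{a}}(c) \) is a Borel bijection \( \mathcal{C}^{X}_{\one}\times\{1,\alpha\}^{d}\to\mathcal{C}^{X} \) carrying the partition by the \( \{1,\alpha\}^{d} \)-coordinate onto \( \bigsqcup_{\vec{a}}\mathcal{C}^{X}_{\vec{a}} \); and since each \( \theta^{X}_{\vec{a}}(c) \) lies in the orbit of \( c \) while \( E_{\mathcal{C}^{X}} \) restricted to any single orbit is the full relation on the cross-section points lying in that orbit, we have \( c\,E_{\mathcal{C}^{X}_{\one}}\,c'\iff\Theta^{X}(c,\vec{a})\,E_{\mathcal{C}^{X}}\,\Theta^{X}(c',\vec{b}) \) for all \( \vec{a},\vec{b} \). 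Consequently the cross section, equipped with its orbit relation and its tile-type partition, is Borel isomorphic to \( \mathcal{C}^{X}_{\one}\times\{1,\alpha\}^{d} \) carrying the relation pulled back from \( E_{\mathcal{C}^{X}_{\one}} \) and the coordinate partition; the same holds for \( Y \).

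The relation \( E_{\mathcal{C}^{X}_{\one}} \) is aperiodic, since each orbit of \( \mathbb{R}^{d}\acts Z_{X} \) meets \( \mathcal{C}^{X}_{\one} \) in infinitely many points (the cross section itself is infinite on each orbit and the matchings split it evenly among the \( 2^{d} \) tile types); it is hyperfinite, being a subrelation of the hyperfinite cross-section relation of a Borel \( \mathbb{R}^{d} \)-flow; and it is non-smooth, for a Borel transversal of it would be a Borel transversal for \( \mathbb{R}^{d}\acts Z_{X} \), yet \( Z_{X} \) carries a pie measure of \( X \) (it has uniformly full measure) whereas a smooth free \( \mathbb{R}^{d} \)-flow admits no invariant probability measure. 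Moreover, under \( \Theta^{X} \) an invariant probability measure on \( \mathcal{C}^{X} \) corresponds to one on \( \mathcal{C}^{X}_{\one} \) (invariance forces the \( \{1,\alpha\}^{d} \)-marginal to be uniform), and ergodicity is respected in both directions, so \( |\mathcal{E}(\mathcal{C}^{X}_{\one})|=|\mathcal{E}(\mathcal{C}^{X})|=|\mathcal{E}(X)| \) by Proposition~\ref{prop:bijection-pie-measures}; likewise for \( Y \). Hence \( E_{\mathcal{C}^{X}_{\one}} \) and \( E_{\mathcal{C}^{Y}_{\one}} \) are non-smooth aperiodic hyperfinite Borel equivalence relations with the same number of pie measures, and the Dougherty--Jackson--Kechris classification furnishes a Borel isomorphism \( \psi_{0}\colon\mathcal{C}^{X}_{\one}\to\mathcal{C}^{Y}_{\one} \) between them.

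It remains to lift \( \psi_{0} \). Transporting it across the product decompositions, \( \psi=\Theta^{Y}\circ(\psi_{0}\times\mathrm{id})\circ(\Theta^{X})^{-1} \) is a Borel bijection \( \mathcal{C}^{X}\to\mathcal{C}^{Y} \) with \( \psi(\mathcal{C}^{X}_{\vec{a}})=\mathcal{C}^{Y}_{\vec{a}} \) and \( c\,E_{\mathcal{C}^{X}}\,c'\iff\psi(c)\,E_{\mathcal{C}^{Y}}\,\psi(c') \). Define \( \phi\colon Z_{X}\to Z_{Y} \) by \( \phi(c+r)=\psi(c)+r \) whenever \( c\in\mathcal{C}^{X}_{\vec{a}} \) and \( r\in\widetilde{R}_{\vec{a}} \); this is well defined since \( \psi(c)\in\mathcal{C}^{Y}_{\vec{a}} \), so that \( \psi(c)+\widetilde{R}_{\vec{a}} \) is precisely the \( \mathscr{R}^{Y} \)-tile centred at \( \psi(c) \). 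Then \( \phi \) is a Borel bijection with inverse \( c'+r\mapsto\psi^{-1}(c')+r \); it carries each orbit of \( Z_{X} \) onto an orbit of \( Z_{Y} \), because \( \psi \) maps the cross-section points of an orbit bijectively onto those of the image orbit while the tiles partition every orbit; and on each tile it is a translation, so, summing over the tiles that partition an orbit, it pushes the Lebesgue measure of every orbit of \( Z_{X} \) forward onto the Lebesgue measure of its image orbit. Thus \( \phi \) is an LOE in the sense of Definition~\ref{def:lebesgue-orbit-equivalence}. The main obstacle is this middle step: the invariant handed to us by Theorem~\ref{thm:Rudolphs-tiling} is the \emph{tiled} orbit structure, which remembers tile shapes and the way they recur, and the work lies in stripping it down to a bare countable Borel equivalence relation so that the DJK classification applies directly; the matchings \( \theta_{\vec{a}} \) are exactly what makes that reduction Borel and pie-measure-count preserving, after which passing back up to a genuine LOE is painless since congruent tiles differ only by a translation.
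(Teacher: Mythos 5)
Your proof is correct and takes essentially the same route as the paper: apply Theorem~\ref{thm:Rudolphs-tiling} to both flows, reduce to the $\one$-part of the cross section via the matchings $\theta_{\vec{a}}$, match counts of pie measures using Proposition~\ref{prop:bijection-pie-measures} together with the matchings, invoke DJK to get $\psi_{0}\colon\mathcal{C}^{X}_{\one}\to\mathcal{C}^{Y}_{\one}$, propagate $\psi_{0}$ to all of $\mathcal{C}^{X}$ via $\theta_{\vec{a}}$, and extend tile-by-tile by translation. The only cosmetic difference is that you package the matchings into an explicit product decomposition $\Theta^{X}\colon\mathcal{C}^{X}_{\one}\times\{1,\alpha\}^{d}\to\mathcal{C}^{X}$ and you explicitly verify aperiodicity and non-smoothness of $E_{\mathcal{C}^{X}_{\one}}$, both of which the paper leaves implicit.
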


\begin{proof}
  If the flows admit no pie measures, the statement is vacuously true, since one may take
  \( Z_{X} = \es = Z_{Y} \).  We therefore assume that flows have invariant measures.  One starts
  with Theorem \ref{thm:Rudolphs-tiling} to find \( Z_{X} \subseteq X \) and \( Z_{Y} \subseteq Y \)
  of uniformly full measure together with regular tilings associated with cross sections
  \( \mathcal{C}_{X} \subseteq Z_{X} \) and \( \mathcal{C}_{Y} \subseteq Z_{Y} \).  Notice that
  \( |\mathcal{E}(X)| = |\mathcal{E}(Z_{X})| \) and \( |\mathcal{E}(Y)| = |\mathcal{E}(Z_{Y})| \),
  because \( X \setminus Z_{X} \) and \( Y \setminus Z_{Y} \) have measure zero with respect to all
  invariant measures.

  We would like to apply DJK classification of hyperfinite equivalence relations to cross sections
  \( \mathcal{C}_{X, \one} \) and \( \mathcal{C}_{Y, \one} \) and to find a Borel isomorphism
  between induced equivalence relations \( \phi : \mathcal{C}_{X,\one} \to \mathcal{C}_{Y,\one} \).
  For this we need to check that the equivalence relations on these cross sections possess the same
  number of pie measures.  While Proposition \ref{prop:bijection-pie-measures} shows that
  \( |\mathcal{E}(Z_{X})| = |\mathcal{E}(\mathcal{C}_{X})|\), it is not immediately clear whether
  this proposition can be applied to the sub cross section \( \mathcal{C}_{X,\one} \), as it is not
  evident from the construction of Theorem \ref{thm:Rudolphs-tiling} whether
  \( \mathcal{C}_{X, \one} \) is cocompact in \( Z_{X} \).  While it is possible to modify the
  argument in Theorem \ref{thm:Rudolphs-tiling} to ensure cocompactness of all
  \( \mathcal{C}_{\vec{a}} \), we may show
  \( |\mathcal{E}(\mathcal{C}_{X})| = |\mathcal{E}(\mathcal{C}_{X, \one})| \) instead as follows.
  Since all the matchings
  \( \theta^{X}_{\vec{a}} : \mathcal{C}_{X, \one} \to \mathcal{C}_{X, \vec{a}} \) preserve the
  equivalence relation \( E_{X} \), they also preserve all the invariant measures on
  \( \mathcal{C}_{X} \).  So, if \( \mu \) is an invariant probability measure on
  \( \mathcal{C}_{X} \), then \( \mu(\mathcal{C}_{X}) = 2^{d} \mu(\mathcal{C}_{X, \one}) \) and
  \( 2^{d}\mu|_{\mathcal{C}_{X, \one}} \) is an invariant probability measure on
  \( \mathcal{C}_{X, \one} \).  On the other hand, if \( \nu \) is an invariant measure on
  \( \mathcal{C}_{X, \one} \), then
  \[ \tilde{\nu} = 2^{-d}\mkern-18mu\sum_{\vec{a} \in \{1, \alpha\}^{d}}
  \mkern-14mu(\theta^{X}_{\vec{a}})_{*} \nu, \quad \textrm{where } \theta^{X}_{\one} = \mathrm{id}, \]
  is easily seen to be an invariant measure on \( \mathcal{C}_{X} \).  These maps,
  \( \mu \mapsto 2^{d}\mu|_{\mathcal{C}_{X, \one}} \) and \( \nu \mapsto \tilde{\nu} \), are inverses
  of each other and are bijections between \( \mathcal{E}(\mathcal{C}_{X}) \) and
  \( \mathcal{E}(\mathcal{C}_{X, \one}) \).  We conclude that
  \( |\mathcal{E}(\mathcal{C}_{X})| = |\mathcal{E}(\mathcal{C}_{X, \one})| \) and therefore 
  \[ |\mathcal{E}(\mathcal{C}_{X, \one})| = |\mathcal{E}(Z_{X})| = |\mathcal{E}(Z_{Y})| =
  |\mathcal{E}(\mathcal{C}_{Y, \one})|.\]

  This allows us to apply the DJK classification and get an isomorphism between the restrictions of
  orbit equivalence relations \( \phi : \mathcal{C}_{X, \one} \to \mathcal{C}_{Y, \one} \) (recall
  that \( E_{\mathcal{C}_{X}} \) and \( E_{\mathcal{C}_{Y}} \) are necessarily hyperfinite by
  \cite[Theorem 1.16]{jackson_countable_2002}).  The maps \( \theta_{\vec{a}}^{X} \) and
  \( \theta_{\vec{a}}^{Y} \) make it easy to extend \( \phi \) to an isomorphism
  \( \phi : \mathcal{C}_{X} \to \mathcal{C}_{Y} \) by setting
  \[ \phi \circ \theta_{\vec{a}}^{X}(c) = \theta^{Y}_{\vec{a}} \circ \phi(c) \quad \textrm{for each
    \( \vec{a} \in \{1,\alpha\}^{d} \) and all \( c \in \mathcal{C}_{X, \one} \)}. \]

  Finally, we may extend \( \phi \) linearly to a LOE \( \phi: Z_{X} \to Z_{Y} \).  More formally,
  for any \( x \in Z_{X} \) there exist unique \( \vec{a} \in \{1,\alpha\}^{d} \) and \( c \in
  \mathcal{C}_{X, \vec{a}} \) such that \( x \in c + \widetilde{R}_{\vec{a}} \).  Let \( \vec{v} \in
  \widetilde{R}_{\vec{a}}\) be such that \( c + \vec{v} = x \).  The point \( \phi(x) \) is defined by
  \[ \phi(x) = \phi(x - \vec{v}) + \vec{v}. \]
  In other words, \( \phi \) maps \( c + \widetilde{R}_{\vec{a}} \) onto
  \( \phi(c) + \widetilde{R}_{\vec{a}} \) in a linear fashion.  It is, of course, the crucial
  property of our construction that \( c \in \mathcal{C}_{X, \vec{a}} \) if and only if
  \( \phi(c) \in \mathcal{C}_{Y,\vec{a}} \).
\end{proof}

\section{Lebesgue Orbit Equivalences between compressible flows}
\label{sec:compr-flows-proof}

In this section we deal with flows that have no invariant probability measures.  This negative
condition has a positive reformulation discovered by M.~G.~Nadkarni \cite{nadkarni_existence_1990}.
\begin{theorem-nn}[Nadkarni]
  A hyperfinite\footnote{As proved in \cite[Theorem 4.3.1]{becker_descriptive_1996}, the assumption of
    hyperfiniteness may be dropped; the theorem is true for all countable Borel equivalence
    relations.} Borel equivalence relation has no invariant probability measures if and only if it
  is compressible.
\end{theorem-nn}

There is a number of equivalent reformulations of compressibility, we shall adopt the following one.
A countable Borel Equivalence relation \( E \) on a standard Borel space \( \mathcal{C} \) is
\emph{compressible} if there exist injective homomorphisms
\( \tau_{n} : \mathcal{C} \to \mathcal{C} \), \( n \in \mathbb{N} \), with disjoint images:
\( \tau_{m}(\mathcal{C}) \cap \tau_{n}(\mathcal{C}) = \es \) when \( m \ne n \) and
\[ c E \tau_{n}(c) \quad \textrm{for all } c \in \mathcal{C} \textrm{ and all } n \in
\mathbb{N}.  \]
This homomorphisms will allow us to run a back-and-forth construction of LOE map between
compressible flows.

\begin{theorem}
  \label{thm:compressible-LOE}
  If free non smooth flows \( \mathbb{R}^{d} \acts X \) and \( \mathbb{R}^{d} \acts Y \)
  admit no invariant probability measures, then these flows are Lebesgue Orbit Equivalent.
\end{theorem}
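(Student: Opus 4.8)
The plan is to transfer the problem onto the cross sections of rectangular tilings, where the hypotheses become compressibility, and then to build the Lebesgue Orbit Equivalence of the flows directly by a back-and-forth construction in which the compression maps supply the room needed to reconcile the Lebesgue volumes of the tiles.

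\emph{Reduction to cross sections.} First I would invoke Theorem \ref{thm:existence-of-rectangular-tilings} to fix rectangular tilings of \( \mathbb{R}^{d} \acts X \) and \( \mathbb{R}^{d} \acts Y \), with associated cocompact cross sections \( \mathcal{C}_{X} \subseteq X \) and \( \mathcal{C}_{Y} \subseteq Y \), and record the Lebesgue volumes of the tiles as Borel functions \( v_{X} : \mathcal{C}_{X} \to \mathbb{R}^{>0} \), \( v_{Y} : \mathcal{C}_{Y} \to \mathbb{R}^{>0} \). The relations \( E_{\mathcal{C}_{X}} \), \( E_{\mathcal{C}_{Y}} \) are hyperfinite by \cite[Theorem 1.16]{jackson_countable_2002}; they are aperiodic, since a finite cross-section class together with cocompactness would exhibit an orbit as a finite union of compact sets, which is impossible for \( \mathbb{R}^{d} \); and they are non smooth, for a Borel transversal of \( E_{\mathcal{C}_{X}} \) is also a Borel transversal of \( E_{X} \), contradicting non smoothness of the flow. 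Neither admits an invariant probability measure: by the correspondence of Section \ref{sec:invar-meas-cross-phase} a finite invariant measure on \( \mathcal{C}_{X} \) lifts to a finite invariant measure on \( X \), which after normalization would violate the hypothesis. By Nadkarni's theorem both relations are therefore compressible, and I would fix injective homomorphisms \( \tau^{X}_{n} : \mathcal{C}_{X} \to \mathcal{C}_{X} \) and \( \tau^{Y}_{n} : \mathcal{C}_{Y} \to \mathcal{C}_{Y} \) with pairwise disjoint images witnessing this; their role is that inside every orbit they exhibit infinitely many pairwise disjoint copies of the cross section, so that the tiling of every orbit contains, beyond any bounded region, arbitrarily many further tiles. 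Finally I note that the shapes of the tiles are irrelevant for the construction of a LOE: any two rectangles of equal Lebesgue volume are carried onto one another by a determinant-one diagonal linear map, which preserves the Lebesgue measure and carries a tile bijectively into a single orbit, so only the volumes \( v_{X} \), \( v_{Y} \) matter.

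\emph{The back-and-forth.} I would build \( \phi : X \to Y \) as an increasing union \( \phi = \bigcup_{k} \phi_{k} \) of Borel partial injections, maintaining the invariant that \( \phi_{k} \) is defined on a Borel set \( A_{k} \subseteq X \) meeting each orbit in finitely many whole tiles together with at most one half-open slab of a further tile, maps it bijectively onto a set \( B_{k} \subseteq Y \) of the same form, and preserves the Lebesgue measure orbit by orbit. To go from stage \( k \) to stage \( k+1 \) one selects, along the finite sub-relations of \( E_{\mathcal{C}_{X}} \) in the manner of the Dougherty--Jackson--Kechris argument, the next tile of \( X \) not yet covered by \( A_{k} \); with \( v \) its Lebesgue volume, one uses the maps \( \tau^{Y}_{n} \) to locate, in the portion of the corresponding \( Y \)-orbit not yet used by \( \phi_{k} \), enough further tiles to cut out a half-open rectangular region of Lebesgue volume exactly \( v \) (spanning in general several \( Y \)-tiles, the last one only partially), and extends \( \phi_{k+1} \) to map that \( X \)-tile onto this region linearly and Lebesgue-preservingly. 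Performing the symmetric step with \( X \) and \( Y \) interchanged, and running the enumerations so that every tile of both flows is eventually consumed, one obtains in the limit a Borel bijection \( \phi : X \to Y \) sending orbits onto orbits. As the Lebesgue measure of a Borel subset of an orbit is the countable sum of the measures of its intersections with the tiles, each preserved by the relevant \( \phi_{k} \), the map \( \phi \) preserves the Lebesgue measure on every orbit, and is the required LOE.

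\emph{The main obstacle.} The measure theory here is light --- the Lebesgue measure is transported tile-fragment by tile-fragment --- so the heart of the argument is the Borel bookkeeping of the back-and-forth carried out uniformly over all orbits at once: keeping the ``used'' region and the partial map Borel although the cut-out pieces ignore tile boundaries (whence the slabs); guaranteeing that at every stage the unused part of the relevant \( Y \)-orbit still contains an untouched tile sufficiently far out, which is precisely what disjointness of the images of the \( \tau^{Y}_{n} \) provides, while arranging that the pieces used at distinct stages are disjoint and jointly exhaust the orbit; and organizing the whole recursion along the finite sub-relations of \( E_{\mathcal{C}_{X}} \) and \( E_{\mathcal{C}_{Y}} \), as in the proof of the Dougherty--Jackson--Kechris classification, so that it goes through in a Borel way. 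I expect this combined construction --- a back-and-forth that simultaneously matches orbits and transports the Lebesgue measure --- to be the crux of the proof.
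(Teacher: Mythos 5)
Your high-level strategy coincides with the paper's: pass to the cross section of a rectangular tiling, observe that the induced relation is compressible (and hyperfinite, aperiodic, non-smooth), fix compression maps \( \tau^{X}_{n}, \tau^{Y}_{n} \), and extend a DJK isomorphism of cross sections to a Lebesgue-preserving orbit equivalence of the flows by a back-and-forth in which the compressions supply extra room. The paper does exactly this.

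However, the specific mechanism you propose for a single forth step has a genuine gap. You take an \( X \)-tile of volume \( v \), use the \( \tau^{Y}_{n} \) to ``locate \dots\ enough further tiles to cut out a half-open rectangular region of Lebesgue volume exactly \( v \)'' and ``map that \( X \)-tile onto this region linearly.'' But the \( Y \)-tiles indexed by \( \tau^{Y}_{1}(\phi(c)), \tau^{Y}_{2}(\phi(c)), \ldots \) are scattered across the \( Y \)-orbit; their union (together with a partial slab of a last one) is neither a rectangle nor even connected, so there is no single linear \textemdash{} let alone diagonal volume-one \textemdash{} map of the \( X \)-tile onto it. You would have to subdivide the source tile into pieces matching each target piece in volume, i.e.\ a piecewise map, at which point the remark that rectangles of equal volume are linearly equivalent no longer buys anything. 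Moreover, cutting tiles by slabs whose thicknesses are determined by running volume deficits means the cut positions vary measurably over the cross section and accumulate across stages; keeping the invariant ``finitely many whole tiles plus at most one slab'' Borel and consistent through both forth and back steps is precisely the bookkeeping that needs an argument, and you've flagged it but not resolved it.

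The paper sidesteps both difficulties at once by a dyadic refinement. It first normalizes (via Theorem~\ref{thm:existence-of-rectangular-tilings-restr}) so that every tile side lies in \( [4,5] \), and then partially covers each tile \( R_{c} \) by \( \prod_{i} n_{i,k}(c) \) dyadic blocks \( B^{k} = [0, 2^{-k})^{d} \), with \( n_{i,k}(c) \) chosen so the uncovered proportion is in \( (2^{-k-2}, 2^{-k-1}] \); in particular \( n_{i,0}(c) = 3 \) for \emph{every} \( c \), so at level \( 0 \) the DJK map extends to a block-to-block bijection with no compression needed. At each subsequent stage the map always sends a \( B^{k} \) block onto a single \( B^{k} \) block (forth) or a \( B^{k} \) block onto \( 2^{d} \) many \( B^{k+1} \) blocks (back), each piece linear and measure-preserving; the only freedom to absorb is a \emph{count} of dyadic blocks, supplied by \( \tau^{Y}_{n} \) and \( \tau^{X}_{n} \). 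This keeps all cut positions dyadic and canonical, keeps the partial maps manifestly Borel, and makes exhaustion automatic from \( \bigcup_{k}(\mathcal{C}^{k}_{X} + B^{k}) = X \). So while your plan is conceptually on target, the ``match volumes by arbitrary slabs under a single linear map'' step is not correct as stated and needs to be replaced by something like the paper's dyadic-block scheme.
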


\begin{proof}
  We begin by applying Theorem \ref{thm:existence-of-rectangular-tilings-restr} to pick rectangular
  tilings \( \mathscr{R}^{X} \) and \( \mathscr{R}^{Y} \) of \( X \) and \( Y \).  We agreed earlier
  to pick the center of each tile as its representing point, but it is convenient to deviated from
  this convention here and let \( \mathcal{C}_{X} \) and \( \mathcal{C}_{Y} \) to consists of
  ``bottom left'' corners of the domains \( \mathscr{R}^{X}_{c} \) and \( \mathscr{R}^{Y}_{c} \).
  For any \( c \in \mathcal{C}_{X} \cup \mathcal{C}_{Y} \) we therefore have a rectangle
  \( R_{c} \subseteq \mathbb{R}^{d} \) of the form \( \prod_{i=1}^{d}\bigl[0, \zeta_{i}(c)\bigr) \)
  such that
  \[ \mathscr{R}^{X} = \bigl\{(x,c) \in X \times \mathcal{C}_{X} : x \in c + R_{c}\bigr\}\quad
  \textrm{ and }\quad \mathscr{R}^{Y} = \bigl\{(x,c) \in Y \times \mathcal{C}_{Y} : x \in c +
  R_{c}\bigr\}. \]
  According to Theorem \ref{thm:existence-of-rectangular-tilings-restr}, we may assume that
  \( \zeta_{i}(c) \in [4,5] \) for all \( c \in \mathcal{C}_{X} \cup \mathcal{C}_{Y} \) and all
  \( i \le d \).
  
  These cross sections are cocompact, and by the results from Section
  \ref{sec:invar-meas-cross-phase}, equivalence relations \(E_{{\mathcal{C}}_{X}} \) and
  \( E_{\mathcal{C}_{Y}} \) have no invariant probability measures.  By \cite[Theorem
  1.16]{jackson_countable_2002}, these relations are also hyperfinite, and therefore by the DJK
  classification, there is a Borel isomorphism \( \phi : \mathcal{C}_{X} \to \mathcal{C}_{Y} \)
  between \( E_{\mathcal{C}_{X}} \) and \( E_{\mathcal{C}_{Y}} \).

  In what follows we extend \( \phi \) to a LOE  \( \phi: X \to Y \) between \( E_{X} \) and \( E_{Y} \).

  For \( k \ge 0 \), let \( B^{k} = \prod_{i \le d} [0, 2^{-k}) \) denote a semi-open
  \( d \)-dimensional square with side \( 2^{-k} \), and let us fix for a moment a single tile
  \( R_{c} \) for some \( c \in \mathcal{C}_{X} \).  We describe a process of covering a portion of
  \( R_{c} \) by copies of \( B^{k} \).  Let \( n_{i,k}(c) = n_{i,k} \) be the smallest
  integer such that
  \begin{equation}
    \label{eq:5}
    \frac{2^{-k} \cdot n_{i,k}}{\zeta_{i}(c)} \in \bigl( 1-2^{-k-1}, 1  \bigr).
  \end{equation}

  Here is a verbose explanation of this parameter.  Consider the interval
  \( \bigl[0, \zeta_{i}(c)\bigr) \), and let us start tiling it with intervals of length
  \( 2^{-k} \) beginning from the left endpoint.  The integer \( n_{i,k} \) is the smallest integer
  such that if we put \( n_{i,k} \) many intervals \( [0,2^{-k}) \) into
  \( \bigl[0, \zeta_{i}(c)\bigr) \), then the proportion of \( \bigl[0, \zeta_{i}(c)\bigr) \) that
  that is not covered is less than \( 2^{-k-1} \).  Since in our situation \( \zeta_{i}(c) \) is
  always between \( 4 \) and \( 5 \), \( n_{i,0}(c) = 3 \) for all \( i \) and all \( c \), but for
  \( k \ge 1 \) the parameter will start to vary.

  Note that since \( n_{i,k} \) is defined to be the smallest integer satisfying \eqref{eq:5}, one
  has in fact
  \begin{equation}
    \label{eq:6}
    \frac{2^{-k} \cdot n_{i,k}}{\zeta_{i}(c)} \in \bigl( 1-2^{-k-1}, 1-2^{-k-2}  \bigr] \quad
    \textrm{for all \( i \le d \) and \( k \ge 0 \)}.
  \end{equation}

  \addpicture{r}{Back_and_forth_partition}{45.2mm}{\vspace*{6mm}}{fig:tiles-decomposition}
  We now partially cover \( R_{c} \) with copies of \( B^{k} \) by putting \( n_{i,k} \) many
  rectangles \( B^{k} \) in the \( i^{\textrm{th}} \) direction of \( R_{c} \) starting from the
  ``bottom left'' corner.  

  Figure \ref{fig:tiles-decomposition} illustrates the levels \( k = 0,1\), and \( 2 \) of this
  process.  On that figure, \( n_{1,0} = n_{2,0} = 3 \), and we therefore have \( 9 \) squares
  \( B^{0} \) \textemdash{} three in each row and each column.  At the level \( k = 1 \),
  \( n_{1,1} = 8 \) and \( n_{1,1} = 7 \) resulting in \( 56 \) copies of \( B^{1} \).  Note that in
  Figure \ref{fig:tiles-decomposition} blocks \( B^{1} \) which refine those of \( B^{0} \) are not
  shown; only the blocks which cover parts of \( R_{c} \) uncovered by \( B^{0} \) are depicted.
  Finally, for \( k=2 \) we have \( n_{1,2} = 17 \) and \( n_{2,2} = 16 \) and blocks
  \( B^{2} \) cover even more of \( R_{c} \).

  This partial covers can be constructed in a Borel fashion for all points
  \( c \) in \( \mathcal{C}_{X} \cup \mathcal{C}_{Y} \), which results in chains of Borel cross sections
  \[\hspace*{-5cm} \mathcal{C}_{X} \subseteq \mathcal{C}_{X}^{0} \subseteq \mathcal{C}_{X}^{1}
  \subseteq \cdots,
  \quad \textrm{and} \quad \mathcal{C}_{Y} \subseteq \mathcal{C}_{Y}^{0} \subseteq
  \mathcal{C}_{Y}^{1} \subseteq \cdots, \]
  where \( \mathcal{C}_{X}^{k} \) consists of ``bottom left'' endpoints of blocks \( B^{k} \), that
  satisfy the following properties:

  \vspace*{0.01mm}
  \begin{enumerate}[(a)]
  \item \( \mathcal{C}_{X}^{k} + B^{k} \subseteq \mathcal{C}_{X}^{k+1} + B^{k+1}  \) \textemdash{} the next
    level of blocks covers at least as much as the previous.
  \item\label{item:next-cover-has-more}
    \( (c + R_{c}) \cap \bigl( \mathcal{C}_{X}^{k+1} \setminus (\mathcal{C}_{X}^{k} + B^{k}) \bigr) \ne
    \es \) for all \( c \in \mathcal{C}_{X} \) \textemdash{} within every tile \( R_{c} \) there is
    always a point from \( \mathcal{C}_{X}^{k+1} \) which has not been covered by any
    \( B^{k} \) block.  In other words, within every tile blocks \( B^{k+1} \) cover \emph{strictly
      more} than \( B^{k} \) blocks.
  \item\label{item:covering-each-tile} \( X = \bigcup_{k}(\mathcal{C}_{X}^{k} + B^{k}) \)
    \textemdash{} every point in \( X \) is covered from some level on.
  \end{enumerate}
  Of course, similar properties hold for \( \mathcal{C}_{Y} \) instead of \( \mathcal{C}_{X} \).
  
  We are now ready to run a back-and-forth extension of
  \( \phi : \mathcal{C}_{X} \to \mathcal{C}_{Y} \) beginning with the step \( k = 0 \).  Since we
  have chosen our tiles in such a way that \( n_{i,0} = 3 \) for all \( i \), each tile \( R_{c} \)
  has \( 3^{d} \) blocks \( B^{0} \).

  We therefore may extend \( \phi \) first to a Borel isomorphism \( \phi : \mathcal{C}_{X}^{0} \to
  \mathcal{C}_{Y}^{0} \) between \( E_{\mathcal{C}_{X}^{0}} \) and \( E_{\mathcal{C}_{Y}^{0}} \) by
  matching \( (c + R_{c}) \cap \mathcal{C}_{X}^{0} \) with point in \( (\phi(c) + R_{\phi(c)}) \cap
  \mathcal{C}_{Y}^{0} \) for all \( c \in \mathcal{C}_{X} \), and then extend \( \phi :
  \mathcal{C}_{X}^{0} + B^{0} \to \mathcal{C}_{Y}^{0} + B^{0}\) linearly on each block \( B^{0} \).
  The map \( \phi \) defined this way preserves Lebesgue measure within orbits on its domain.  Since
  \( n_{i,0}(c) = 3 \) for all \( c \in \mathcal{C}_{X} \) and all \( i \le d \), there is no need
  for the ``back'' part of the argument and we proceed to the next step of the construction.

  At the level \( k=1 \) we would like to extend \( \phi \) which is currently defined on
  \( \mathcal{C}_{X}^{0} + B^{0} \) to a map
  \( \phi : \mathcal{C}_{X}^{1} + B^{1} \to \mathcal{C}_{Y}^{1} + B^{1} \).  A naive approach
  would be to take \( c \in \mathcal{C}_{X} \) and to try to map injectively the elements
  \( \mathcal{C}_{X}^{1} \setminus (\mathcal{C}_{X}^{0} + B^{0}) \) from \( c + R_{c} \) to
  corresponding elements from \( \phi(c) + R_{\phi(c)} \).  This approach may fail, since there may
  be more elements in the domain, than in the range.  For instance, in example on Figure
  \ref{fig:forth-step} we have \( n_{1,1}(c) = 8 \) and \( n_{2,1}(c) = 8 \), while in the
  image \( R_{\phi(c)} \) we may have \( n_{1,1}(\phi(c)) = 7 \) and \( n_{2,1}(\phi(c)) = 7 \), and
  so there will be \( 28 \) blocks \( B^{1} \) in \( c + R_{c} \) not in the domain of \( \phi \) at
  the current stage, and only \( 13 \) blocks \( B^{1} \) in \( \phi(c) + R_{\phi(c)} \) not in the
  range of \( \phi \).

  \begin{figure}[ht]
    \centering
    \begin{overpic}[tics=10]{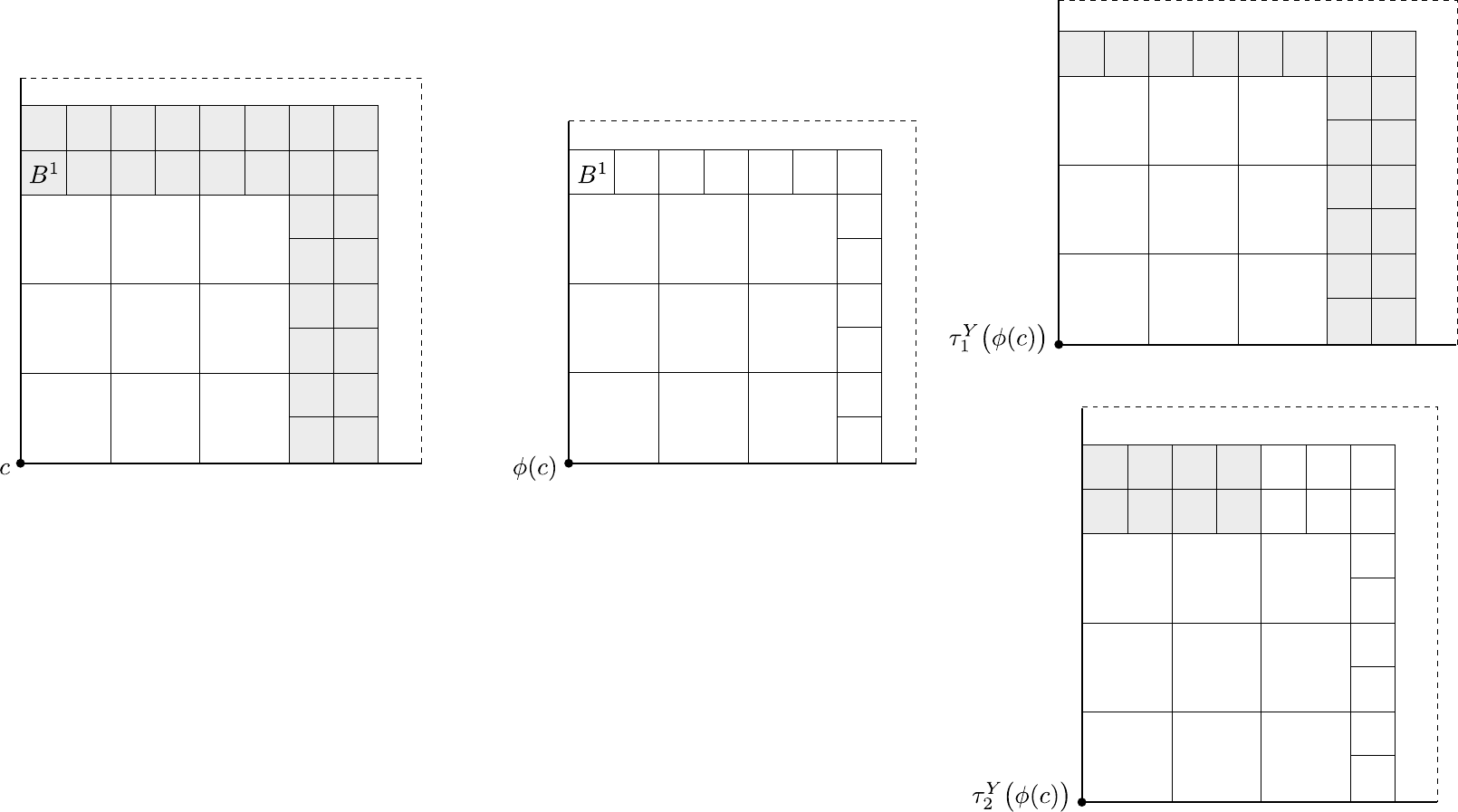}
      \put(39,49){
        \begin{minipage}[b]{4cm}
          Fewer \( B^1 \) blocks in \( R_{\phi(c)} \)\\ than in \( R_c \).
        \end{minipage}}
      \put (2,4){
      \begin{minipage}[b]{10cm}
        While there are not enough \( B^{1} \) blocks available in \( R_{\phi(c)} \), there are more
        \( B^{1} \) blocks not in the range of \( \phi \) in \( R_{\tau_1^Y(\phi(c))} \) and
        \( R_{\tau_2^Y(\phi(c))} \) combined than in \( R_{c} \).  Gray \( B^{1} \) blocks in \( R_{c} \)
        are mapped linearly onto gray \( B^{1} \) blocks in \( R_{\tau_1^Y(\phi(c))} \) and
        \( R_{\tau_2^Y(\phi(c))} \).  Note that some of the \( B^{1} \) blocks in
        \( R_{\tau_2^Y(\phi(c))} \) remain available.  We shall take care of them during the
        ``back'' stage of the construction.
      \end{minipage}}
    \end{overpic}
    \caption{Forth step extension}
    \label{fig:forth-step}
  \end{figure}

  We overcome this obstacle by using the maps
  \( \tau_{n}^{Y} : \mathcal{C}_{Y} \to \mathcal{C}_{Y} \) which witness compressibility.  By item
  \eqref{item:next-cover-has-more}, for any
  \( \tilde{c} \in \mathcal{C}_{Y} \) there exists at least one \( B^{1} \) block in
  \( \tilde{c} + R_{\tilde{c}} \) which is not in the range of \( \phi \).  On the other hand,
  there is a uniform upper bound on the number of \( B^{1} \) blocks in any \( c + R_{c} \) tile,
  \( c \in \mathcal{C}_{X} \).  Therefore for \( N_{1} \) sufficiently large the total number of
  \( B^{1} \) blocks in tiles
  \[ \tau^{Y}_{1}(\phi(c)) + R_{\tau^{Y}_{1}(\phi(c))}, \ldots, \tau^{Y}_{N_{1}}(\phi(c)) +
  R_{\tau^{Y}_{N_{1}}(\phi(c))} \]
  which are not in the range of \( \phi \) exceeds the number of \( B^{1} \) blocks in
  \( c + R_{c} \).  We can thus extend \( \phi \) to an injective function
  \( \phi : \mathcal{C}_{X}^{1} \to \mathcal{C}_{Y}^{1} \) by mapping for each
  \( c \in \mathcal{C}_{X} \)
  \[ \textrm{points in } (c + R_{c}) \cap \bigl(\mathcal{C}_{X}^{1} \setminus (\mathcal{C}_{X}^{0} +
  B^{0})\bigr) \textrm{ to points in } \bigcup_{j=1}^{N_{1}}\Bigl(\bigl(\tau^{Y}_{j}(\phi(c)) +
  R_{\tau^{Y}_{j}(\phi(c))}\bigr) \cap \bigl(\mathcal{C}_{Y}^{1} \setminus (\mathcal{C}_{Y}^{0} + B^{0})
  \bigr) \Bigr), \]
  and then extend it linearly to
  \( \phi : \mathcal{C}_{X}^{1} + B^{1} \to \mathcal{C}_{Y}^{1} + B^{1} \).  This finishes the
  ``forth'' part of our back-and-forth argument.

  The ``back'' part of the argument is similar.  At this moment \( \phi \) is defined on all
  \( B^{1} \) blocks in \( X \), while the range of \( \phi \) consists of \emph{all} \( B^{0} \)
  blocks and \emph{some} \( B^{1} \) blocks in \( Y \).  In other words, \( \phi^{-1} \) is defined
  on some of the \( B^{1} \) blocks of \( Y \), and we would like to extend \( \phi^{-1} \) to all
  blocks \( B^{1} \) in \( Y \).  We shall map \( B^{1} \) blocks of \( Y \) which are not yet in
  the domain of \( \phi^{-1} \) onto \( B^{2} \) blocks in \( X \).  Each \( B^{1} \) block will be
  mapped (in a measure preserving way) onto \( 2^{d} \) blocks \( B^{2} \).  Since there is a
  uniform bound on the number of \( B^{1} \) blocks in a tile \( \tilde{c} + R_{\tilde{c}} \),
  \( \tilde{c} \in \mathcal{C}_{Y} \), and since each tile \( c + R_{c} \),
  \( c \in \mathcal{C}_{X} \), contains at least one \( B^{2} \) block not yet in the domain of
  \( \phi \)(because of item \eqref{item:next-cover-has-more} and because the domain of \( \phi \)
  currently consists of \( B^{1} \) rectangles only), there exists a sufficiently large \( M_{1} \)
  such that for any \( \tilde{c} \in \mathcal{C}_{Y} \) the number of available \( B^{2} \) blocks
  in the tiles
  \[ \tau_{1}^{X}\bigl( \phi^{-1}(\tilde{c}) \bigr) + R_{\tau_{1}^{X}( \phi^{-1}(\tilde{c}) )}, \ldots,
  \tau_{M_{2}}^{X}\bigl( \phi^{-1}(\tilde{c}) \bigr) + R_{\tau_{M_{1}}^{X}( \phi^{-1}(\tilde{c}) )} \]
  exceeds 
  \[ 2^{d} \cdot \bigl| (\tilde{c} + R_{\tilde{c}}) \cap \mathcal{C}^{1}_{Y} \bigr|. \]
  We may therefore extend \( \phi \) in such a way that \( \phi^{-1} \) is defined on all of
  \( \mathcal{C}_{Y}^{1} + B^{1} \) and \( \phi \) preserves the Lebesgue measure on its domain.
  This ends the ``back'' part.

  The construction continues in the same fashion.  The map \( \phi \) is now define on \emph{some}
  \( B^{2} \) blocks in \( X \) and we extend it to all of \( \mathcal{C}_{X}^{2} + B^{2} \) in such
  a way that the image of a \( B^{2} \) block in \( X \) is a \( B^{2} \) block in \( Y \).  In
  general, \( \phi \) will satisfy
  \[ \phi(\mathcal{C}_{X}^{k} + B^{k}) \subseteq \mathcal{C}_{Y}^{k} + B^{k} \quad \textrm{and}
  \quad \phi^{-1}(\mathcal{C}_{Y}^{k} + B^{k}) \subseteq \mathcal{C}_{X}^{k+1} + B^{k+1}.\]
  From item \eqref{item:covering-each-tile}, it is immediate that in the limit \( \phi \) is a Borel
  isomorphism between \( X \) and \( Y \), and the construction ensures that \( \phi : X \to Y \) is
  a Lebesgue Orbit Equivalence.
\end{proof}

\section{Proof of the main theorem}
\label{sec:proof-main-theorem}

\begin{theorem}
  \label{thm:loe-Rd-flows}
  Let \( \mathbb{R}^{d} \acts X \) and \( \mathbb{R}^{d} \acts Y \) be a pair of free non smooth
  flows.  These flows are LOE if and only if \( |\mathcal{E}(X)| = |\mathcal{E}(Y)| \).
\end{theorem}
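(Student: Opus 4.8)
The plan is to prove the two implications separately; the forward direction is essentially immediate from earlier results, and the converse combines Theorems~\ref{thm:LOE-uniformly-full} and~\ref{thm:compressible-LOE} with a gluing/absorption step.

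For the forward implication: an LOE is in particular a wHOE, and since \( \mathbb{R}^{d} \) is unimodular, Theorem~\ref{thm:wHOE-bijection-pie} applies, so the push-forward along the LOE is a bijection between \( \mathcal{E}(X) \) and \( \mathcal{E}(Y) \); hence \( |\mathcal{E}(X)| = |\mathcal{E}(Y)| \). This half uses neither non-smoothness nor anything from Sections~\ref{sec:rect-tilings-Rn}--\ref{sec:compr-flows-proof}.

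For the converse, set \( \kappa = |\mathcal{E}(X)| = |\mathcal{E}(Y)| \) and split into two cases. If \( \kappa = 0 \), neither flow carries an invariant probability measure, and since both are non-smooth Theorem~\ref{thm:compressible-LOE} yields an LOE directly. If \( \kappa \ge 1 \), first invoke Theorem~\ref{thm:LOE-uniformly-full} to obtain invariant Borel sets \( Z_{X} \subseteq X \) and \( Z_{Y} \subseteq Y \) of uniformly full measure together with an LOE \( \psi : Z_{X} \to Z_{Y} \) between the restricted flows. Because \( Z_{X} \) and \( Z_{Y} \) contain the support of every invariant probability measure, the complements \( X' = X \setminus Z_{X} \) and \( Y' = Y \setminus Z_{Y} \) admit no invariant probability measure; note also that \( Z_{X} \) and \( Z_{Y} \) are themselves non-smooth, since a smooth free flow (being isomorphic to translation on \( \mathbb{R}^{d} \times T \)) carries no invariant probability measure, whereas these do. It remains to patch \( \psi \) together with an equivalence on the complements. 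When \( X' \) and \( Y' \) are both empty, \( \psi \) already is the required LOE; when they are both non-empty and non-smooth, Theorem~\ref{thm:compressible-LOE} provides an LOE \( \chi : X' \to Y' \) and \( \psi \sqcup \chi : X \to Y \) works. The remaining configurations (one complement empty or smooth) are reduced to these by an absorption step: one shows \( X \cong_{\mathrm{LOE}} Z_{X} \) and, symmetrically, \( Y \cong_{\mathrm{LOE}} Z_{Y} \), and then chains \( X \cong_{\mathrm{LOE}} Z_{X} \cong_{\mathrm{LOE}} Z_{Y} \cong_{\mathrm{LOE}} Y \) through \( \psi \). The absorption is obtained by re-running the back-and-forth from the proof of Theorem~\ref{thm:compressible-LOE}: starting from the identity on a cross section of \( Z_{X} \), one extends it over \( X \) by using the compressibility of the (hyperfinite, aperiodic) cross-section relation of \( X' \) to keep creating room, while the non-smoothness of \( Z_{X} \) guarantees that the \( Z_{X} \)-side always has unused tiles to receive the image.

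I expect the only genuine obstacle to lie in this last absorption step: the two substantive engines — the uniform Rokhlin lemma together with Rudolph's regular tilings behind Theorem~\ref{thm:LOE-uniformly-full}, and the compressibility back-and-forth behind Theorem~\ref{thm:compressible-LOE} — are already in hand, and the \( \kappa=0 \) and both-complements-non-smooth cases are immediate. What requires care is running a single back-and-forth that is simultaneously compatible with \emph{all} invariant probability measures on the \( Z_{X} \)-part, where tiles may not be moved freely, and with the compressibility witnesses on the measure-free complement; this is, however, only a routine adaptation of the argument already carried out for Theorem~\ref{thm:compressible-LOE}.
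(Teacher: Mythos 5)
Your forward direction and the \( \kappa = 0 \) case are exactly as in the paper, and the plan of applying Theorem~\ref{thm:LOE-uniformly-full} and then Theorem~\ref{thm:compressible-LOE} to the complements is the right skeleton. The gap is in the absorption step, and it is a real one rather than a routine adaptation. You propose to show \( X \cong_{\mathrm{LOE}} Z_{X} \) by re-running the back-and-forth of Theorem~\ref{thm:compressible-LOE} ``starting from the identity on a cross section of \( Z_{X} \), \dots\ while the non-smoothness of \( Z_{X} \) guarantees that the \( Z_{X} \)-side always has unused tiles to receive the image.'' That last clause is false: if you seed the construction with the identity on a cross section of \( Z_{X} \) and then run the tile-by-tile extension, every block of every tile of \( Z_{X} \) is occupied on the range side from the first stage, so there is no unused room to absorb \( X' \). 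Non-smoothness does not create slack; slack in the back-and-forth of Theorem~\ref{thm:compressible-LOE} comes from compressibility on the \emph{range} side, and \( Z_{X} \) is not compressible (it carries invariant probability measures, which is precisely why it is non-smooth). The claim \( X \cong_{\mathrm{LOE}} Z_{X} \) is nonetheless true, but its proof is a restatement of the missing idea, not a rerun of the same argument: one must extract from \( Z_{X} \) an invariant Borel subset \( Z_{0} \) whose induced relation on a cross section is compressible \emph{and} non-smooth (such a set exists since \( Z_{X} \) is non-smooth, cf.\ \cite[Corollary~7.2]{dougherty_structure_1994}), leave the identity on \( Z_{X} \setminus Z_{0} \), and apply Theorem~\ref{thm:compressible-LOE} to get an LOE \( X' \sqcup Z_{0} \to Z_{0} \).

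The paper avoids the absorption lemma altogether by reordering the two steps: it first carves out non-smooth compressible invariant subsets \( X_{0} \subseteq X \) and \( Y_{0} \subseteq Y \) (saturations of non-smooth compressible invariant Borel subsets of a cross section), \emph{then} applies Theorem~\ref{thm:LOE-uniformly-full} to \( X' = X \setminus X_{0} \) and \( Y' = Y \setminus Y_{0} \), which still have the same count of pie measures. The leftover sets \( X_{0} \cup (X' \setminus Z_{X'}) \) and \( Y_{0} \cup (Y' \setminus Z_{Y'}) \) are then automatically non-smooth (they contain \( X_{0} \), resp.\ \( Y_{0} \)) and have no invariant probability measures, so Theorem~\ref{thm:compressible-LOE} applies directly with no case split. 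If you want to keep your ordering, replace the back-and-forth sketch in the absorption step with the carving-out argument above; as written, the justification you give for why there is room in \( Z_{X} \) does not hold.
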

\begin{proof}
  Necessity was proved in Theorem \ref{thm:wHOE-bijection-pie}.  We prove sufficiency.  The
  combination of Theorem \ref{thm:LOE-uniformly-full} and Theorem \ref{thm:compressible-LOE} almost
  works: we may select invariant \( Z_{X} \subseteq X \) and \( Z_{Y} \subseteq Y \) with a LOE
  \( \phi : Z_{X} \to Z_{Y} \) and reduce the problem to finding a LOE between flows
  \( \mathbb{R}^{d} \acts X \setminus Z_{X} \) and \( \mathbb{R}^{d} \acts Y \setminus Z_{Y} \)
  which have no invariant measures.  The only difficulty in applying Theorem
  \ref{thm:compressible-LOE} to the latter is that \( X \setminus Z_{X} \) or
  \( Y \setminus Z_{Y} \) may be smooth.  Fortunately, this obstacle is easy to overcome.

  Let \( X_{0} \subseteq X \) and \( Y_{0} \subseteq Y \) be invariant subsets such that the
  restrictions \( \mathbb{R}^{d} \acts X_{0} \) and \( \mathbb{R}^{d} \acts Y_{0} \) are not smooth
  but admit no pie measures.  Such subsets can be selected as follows.  Pick a cocompact cross
  section \( \mathcal{C} \subseteq X \) and select an invariant Borel subset
  \( \mathcal{C}_{0} \subseteq \mathcal{C} \) such that the equivalence relation
  \( E_{\mathcal{C}_{0}} \) is compressible, but not smooth (see, for instance, \cite[Corollary
  7.2]{dougherty_structure_1994}).  Set \( X_{0} \) to be the saturation of
  \( \mathcal{C}_{0} \), \( X_{0} = \mathcal{C}_{0} + \mathbb{R}^{d} \).  The set \( Y_{0} \subseteq
  Y\) can be selected in a similar way.

  Having picked such \( X_{0} \) and \( Y_{0} \), let \( X' = X \setminus X_{0} \) and
  \( Y' = Y \setminus Y_{0} \).  Apply now Theorem \ref{thm:LOE-uniformly-full} to flows
  \( \mathbb{R}^{d} \acts X' \) and \( \mathbb{R}^{d} \acts Y' \).  As an output we get a LOE
  \( \phi : Z_{X'} \to Z_{Y'} \) between subsets of uniformly full measure.  Now consider the flows
  restricted to the complements:
  \[ \mathbb{R}^{d} \acts X_{0} \cup \bigl( X' \setminus Z_{X'} \bigr) \textrm{ and } \mathbb{R}^{d}
  \acts Y_{0} \cup \bigl( Y' \setminus Z_{Y'} \bigr). \]
  These have no pie measures and are necessarily non smooth, whence we may apply Theorem
  \ref{thm:compressible-LOE} to extend \( \phi \) to a LOE between \( \mathbb{R}^{d} \acts X \) and
  \( \mathbb{R}^{d} \acts Y \).
\end{proof}

\bibliographystyle{alpha}
\bibliography{refs}

\begin{thebibliography}{KPV13}

\bibitem[BK96]{becker_descriptive_1996}
Howard Becker and Alexander~S. Kechris.
\newblock {\em The descriptive set theory of Polish group actions}, volume 232
  of {\em London Mathematical Society Lecture Note Series}.
\newblock Cambridge University Press, Cambridge, 1996.

\bibitem[DJK94]{dougherty_structure_1994}
Randall Dougherty, Steve Jackson, and Alexander~S. Kechris.
\newblock The structure of hyperfinite borel equivalence relations.
\newblock {\em Transactions of the American Mathematical Society},
  341(1):193--225, 1994.

\bibitem[FM77]{feldman_ergodic_1977}
Jacob Feldman and Calvin~C. Moore.
\newblock Ergodic equivalence relations, cohomology, and von {N}eumann
  algebras. {I}.
\newblock {\em Transactions of the American Mathematical Society},
  234(2):289--324, 1977.

\bibitem[GJ15]{gao_countable_2015}
Su~Gao and Steve Jackson.
\newblock Countable abelian group actions and hyperfinite equivalence
  relations.
\newblock {\em Invent. Math.}, 201(1):309--383, 2015.

\bibitem[JKL02]{jackson_countable_2002}
Steve Jackson, Alexander~S. Kechris, and Alain Louveau.
\newblock Countable {B}orel equivalence relations.
\newblock {\em Journal of Mathematical Logic}, 2(1):1--80, 2002.

\bibitem[Kec92]{kechris_countable_1992}
Alexander~S. Kechris.
\newblock Countable sections for locally compact group actions.
\newblock {\em Ergodic Theory and Dynamical Systems}, 12(2):283--295, 1992.

\bibitem[Kec95]{kechris_classical_1995}
Alexander~S. Kechris.
\newblock {\em Classical descriptive set theory}, volume 156 of {\em Graduate
  Texts in Mathematics}.
\newblock Springer-Verlag, New York, 1995.

\bibitem[KPV13]{kyed_l^2-betti_2013}
David Kyed, Henrik~D. Petersen, and Stefaan Vaes.
\newblock \({L}^2\)-betti numbers of locally compact groups and their cross
  section equivalence relations.
\newblock \url{http://arxiv.org/abs/1302.6753}, February 2013.
\newblock {arXiv}: 1302.6753.

\bibitem[Kre76]{krengel_rudolphs_1976}
Ulrich Krengel.
\newblock On {R}udolph's representation of aperiodic flows.
\newblock {\em Ann. Inst. H. Poincar\'e Sect. B (N.S.)}, 12(4):319--338, 1976.

\bibitem[Lin75]{lind_locally_1975}
Douglas~A. Lind.
\newblock Locally compact measure preserving flows.
\newblock {\em Advances in Mathematics}, 15:175--193, 1975.

\bibitem[MR10]{miller_descriptive_2010}
Benjamin~D. Miller and Christian Rosendal.
\newblock Descriptive {K}akutani equivalence.
\newblock {\em Journal of the European Mathematical Society ({JEMS})},
  12(1):179--219, 2010.

\bibitem[Nad90]{nadkarni_existence_1990}
Mahendra~G. Nadkarni.
\newblock On the existence of a finite invariant measure.
\newblock {\em Indian Academy of Sciences. Proceedings. Mathematical Sciences},
  100(3):203--220, 1990.

\bibitem[Rud79]{rudolph_smooth_1979}
Daniel~J. Rudolph.
\newblock Smooth orbit equivalence of ergodic $\mathbb{R}^d$ actions, $d \ge
  2$.
\newblock {\em Transactions of the American Mathematical Society},
  253:291--302, 1979.

\bibitem[Rud88]{rudolph_rectangular_1988}
Daniel~J. Rudolph.
\newblock Rectangular tilings of $\mathbb{R}^n$ and free $\mathbb{R}^n$
  actions.
\newblock In {\em Dynamical systems (College Park, {MD}, 1986--87)}, volume
  1342 of {\em Lecture Notes in Math.}, pages 653--688. Springer, Berlin, 1988.

\bibitem[Str74]{struble_metrics_1974}
Raimond~A. Struble.
\newblock Metrics in locally compact groups.
\newblock {\em Compositio Mathematica}, 28:217--222, 1974.

\end{thebibliography}

\end{document}